\DeclareMathAlphabet{\mathpzc}{OT1}{pzc}{m}{it}
\numberwithin{equation}{section}
\def\eqnarray{\stepcounter{equation}\let\@currentlabel=\theequation
\global\@eqnswtrue
\tabskip\@centering\let\\=\@eqncr
$$\halign to \displaywidth\bgroup\hfil\global\@eqcnt\z@
  $\displaystyle\tabskip\z@{##}$&\global\@eqcnt\@ne
  \hfil$\displaystyle{{}##{}}$\hfil
  &\global\@eqcnt\tw@ $\displaystyle{##}$\hfil
  \tabskip\@centering&\llap{##}\tabskip\z@\cr}
\def\endeqnarray{\@@eqncr\egroup
      \global\advance\c@equation\m@ne$$\global\@ignoretrue}
\def\@yeqncr{\@ifnextchar [{\@xeqncr}{\@xeqncr[1pt]}}
\DeclareMathAlphabet\gothic{U}{euf}{m}{n}
\newcommand{\gota}{\gothic{a}}
\newcommand{\gotb}{\gothic{b}}
\newcommand{\gotm}{\gothic{m}}
\newtheorem{thm}{Theorem}[section]
\newtheorem{defn}[thm]{Definition}
\newtheorem{lem}[thm]{Lemma}
\newtheorem{proposition}[thm]{Proposition}
\newtheorem{remark}[thm]{Remark}
\numberwithin{equation}{section}
\def\Omc{\mathbb{R}^n\setminus\Omega}
\def\Omb{\mathbb{R}^n\setminus\overline{\Omega}}
\def\RR{{\mathbb{R}}}
\def\NN{{\mathbb{N}}}
\def\Om{\Omega}
\def\bOm{\overline{\Om}}
\def\pOm{\partial\Omega}
\title[The Laplace operator with nonlocal exterior conditions]{Realization of the fractional Laplacian with nonlocal exterior conditions via forms method}
\author{Burkhard Claus}
\address{B. Claus,Technische Universit\"at Dresden. Institut f\"ur Analysis
D-01062 Dresden (Germany)}
\email{burkhard.claus@tu-dresden.de}
\author{Mahamadi Warma}
\address{M. Warma, Department of Mathematical Sciences,  George Mason University. Fairfax, VA 22030 (USA). }
\email{mwarma@gmu.edu}
\thanks{The  second author is partially supported by the Air Force Office of Scientific Research under Award NO:  FA9550-18-1-0242}
\keywords{Fractional Laplacian, forms method, Dirichlet, Neumann, and Robin exterior conditions, submarkovian semigroup, ultracontractivity, domination of semigroups.}
\subjclass[2010]{35R11, 47D07, 47D06, 34B10}
\begin{document}

\begin{abstract}
Let $\Omega\subset\RR^n$ ($n\ge 1$) be a bounded open set with a Lipschitz continuous boundary.
In the first part of the paper, using the method of bilinear forms we give a characterization of the realization in $L^2(\Omega)$ of the fractional Laplace operator $(-\Delta)^s$ ($0<s<1$) with the nonlocal Neumann and Robin exterior conditions. Contrarily to the classical local case $s=1$,  it turns out that the nonlocal (Robin and Neumann) exterior conditions can be incorporated in the form domain.
We show that each of the above operators generates a strongly continuous submarkovian semigroup which is also ultracontractive. In the second part, we prove that the semigroup corresponding to the nonlocal Robin exterior condition is always sandwiched between the fractional Dirichlet semigroup and the fractional Neumann semigroup. 
\end{abstract}

\maketitle

\section{Introduction}

Let \(\Omega \subset \mathbb{R}^n\) ($n\ge 1$) be a bounded open set with a Lipschitz continuous boundary $\pOm$. The aim of the present paper is to give a  characterization of the realization in $L^2(\Omega)$ of the fractional Laplace operator $(-\Delta)^s$ ($0<s<1$) with the nonlocal Neumann and Robin exterior conditions by using the method of bilinear forms. Here the operator $(-\Delta)^s$ is given formally by the following singular integral:
\begin{align*}
(-\Delta)^su(x):=C_{n,s}\mbox{P.V.}\int_{\RR^n}\frac{u(x)-u(y)}{|x-y|^{n+2s}}\;dy,\;\;\;x\in\RR^n,
\end{align*}
where $C_{n,s}$ is a normalization constant depending on $n$ and $s$ only.
We refer to Section \ref{sec-prel} for a rigorous definition of $(-\Delta)^s$ and the class of functions for which the singular  integral exists. 

One of the main goals of the present article is to study elliptic problems of the form
\begin{align*}
    (-\Delta)^s u=f \text{ in }  \Om ,
\end{align*}
together with an appropriate exterior condition for \(u\) in \( \Omc\). That is, the Dirichlet \eqref{DiPr}, Neumann \eqref{NePr} or Robin problem \eqref{RoPr} for the fractional Laplacian on \(\Om\) as we shall precisely describe in Section 3. We want to derive a realization of the fractional Laplacian in \(L^2(\Om)\). Since \((-\Delta)^s\) is a nonlocal operator, one needs \(u\) to be defined on the whole \(\RR ^n\) in order to evaluate \((-\Delta)^s u\) at some \(x \in \Om\). Hence, to define a realization \(A: L^2(\Om) \supset D(A) \rightarrow L^2(\Om)\) of the fractional Laplacian, one needs to extend functions from \(\Om\) to the whole of \(\RR^n\). 
We show that the exterior conditions, that we shall define, correspond to certain extensions of functions from \(L^2(\Om)\) to the whole of \(\RR ^n\).  With the help of these extensions we shall define bilinear forms that yield realizations of the (weak) fractional Laplace operator. As in the case of the classical Laplace operator, each of these realizations $A$ is a positive and selfadjoint operator and thus, $-A$ generates a semigroup. These selfadjoint operators are the fractional version of the realizations of the classical Laplace operator with Dirichlet, Neumann and Robin boundary conditions.

It is nowadays well-known that the realizations in $L^2(\Om)$ of the Laplace operator ($-\Delta$) with the Neumann boundary conditions, $\partial_\nu u=0 $ on $\pOm$, and the Robin boundary conditions, $\partial_\nu u+\gamma u=0$ on $\pOm$,  are the selfadjoint operators on $L^2(\Om)$  associated with the closed bilinear forms
\begin{align}\label{lnbc}
\gota^N(u,v):=\int_{\Omega}\nabla u\cdot\nabla v\;dx, \;\; u, v\in D(\gota^N):=W^{1,2}(\Om),
\end{align}
and 
\begin{align}\label{lrbc}
\gota^R(u,v):=\int_{\Omega}\nabla u\cdot\nabla v\;dx+\int_{\pOm}\gamma uv\;d\sigma, \;\; u, v\in D(\gota^R):=W^{1,2}(\Om),
\end{align}
respectively. In \eqref{lrbc} $\gamma\in L^\infty(\pOm)$ is a non-negative given function. We refer to \cite{ArWa1,ArWa2,War-T} and the references therein for more details on this topic.

Another way to formulate boundary value problems for the fractional Laplacian on bounded domains is the regional fractional Laplacian $(-\Delta)_\Omega^s$ ($0<s<1$), defined formally by the following singular integral:
\begin{align*}
(-\Delta)_\Omega^su(x):=C_{n,s}\mbox{P.V.}\int_{\Omega}\frac{u(x)-u(y)}{|x-y|^{n+2s}}\;dy,\;\;\;x\in\Omega.
\end{align*}
This case is more similar to the classical local case $s=1$ and has been investigated in \cite{GW-CPDE,Guan,War-DN1,War} and their references. It turns out that for this case,  if $\frac 12<s<1$, then the associated normal derivative is a local operator (see e.g. \cite{Guan,War,War-IP}. If $0<s\le \frac 12$, then the Dirichlet and Neumann boundary conditions for $(-\Delta)_\Omega^s$ coincide (see e.g. \cite{Chen,War-DN1}). We mention that even on the space $\mathcal D(\Omega)$ of test functions, the operator $(-\Delta)_\Omega^s$ is different from $(-\Delta)^s$. More precisely, for $u\in \mathcal D(\Omega)$ we have 
\begin{align*}
(-\Delta)^su(x)=(-\Delta)_\Omega^su(x)+\kappa(x)u(x),\;x\in\Omega,\;\mbox{ where } \kappa(x):=C_{n,s}\int_{\Omc}\frac{dy}{|x-y|^{n+2s}}.
\end{align*} 
In this paper we will not deal with the regional fractional Laplacian; the situation is more delicate and challenging in the case of $(-\Delta)^s$. To be more precise we have the following difficulties.

\begin{itemize}
\item Firstly, let $u$ be a given function defined on $\Omega$. As we have already mentioned, in order to evaluate $(-\Delta)^su$ at a point $x\in \Om$, it is necessary to know $u$ in all $\RR^n$. Therefore, a natural question arises. Is it possible to find extensions \(u_D,u_N\) and \(u_R\) of \(u\) to the whole of \(\RR^n\) in the case of the Dirichlet, Neumann and Robin exterior conditions, respectively, such that these extensions solve the associated elliptic problem in \(\Omega\)?


\item Secondly, for elliptic problems associated with $(-\Delta)^s$ to be well-posed in $\Omega$, the conditions must not be prescribed on the boundary $\pOm$, but instead in $\Omc$. We shall call such a condition, an exterior condition. This shows that the condition must be given in terms of the extension $\tilde u$ instead of $u$.

\item Finally, it turns out that the operator playing the role for $(-\Delta)^s$ that the normal derivative does for $\Delta$ is also a nonlocal operator. Therefore, we have to deal with a double non-locality.
\end{itemize}

For functions $u,v\in W_\Omega^{s,2}$ (see  Section \ref{sec-prel} for the definition and more details of this space) we let
\begin{align*}
\mathcal E(u,v):=&\frac{C_{n,s}}{2}\int_{\Omega}\int_{\Omega}\frac{(u(x)-u(y))(v(x)-v(y))}{|x-y|^{n+2s}}\;dydx\\
&+C_{n,s}\int_{\Omega}\int_{\Omc}\frac{(u(x)-u(y))(v(x)-v(y))}{|x-y|^{n+2s}}\;dydx
\end{align*}
which is the bilinear form considered in \cite{SDipierro_XRosOton_EValdinoci_2017a}. Observe that
\begin{align*}
\mathcal E(u,v)=\frac{C_{n,s}}{2}\int\int_{\RR^{2n}\setminus(\RR^n\setminus\Omega)^2}\frac{(u(x)-u(y))(v(x)-v(y))}{|x-y|^{n+2s}}\;dydx.
\end{align*}
In particular, if $u=0$ in $\RR^n\setminus\Omega$ or $v=0$ in $\RR^n\setminus\Omega$, then
\begin{align*}
\mathcal E(u,v)=\frac{C_{n,s}}{2}\int_{\RR^n}\int_{\RR^n}\frac{(u(x)-u(y))(v(x)-v(y))}{|x-y|^{n+2s}}\;dydx.
\end{align*}

In the present paper we have obtained the following specific results.

\begin{enumerate}
\item[(i)] Firstly, for a function $u\in L^2(\Omega)$ we define its extension $u_N$ to $\RR^n$ as follows:
\begin{equation*} 
u_N(x):=
\begin{cases}
u(x)\;&\mbox{ if } x\in\Omega,\\
\displaystyle \frac{1}{\rho(x)}\int_{\Omega}\frac{u(y)}{|x-y|^{n+2s}}\;dy\;\;\;&\mbox{ if } x\in\Omb,
\end{cases}
\end{equation*}
where the function $\rho$ is given by
\begin{align*}
\rho(x):=\int_{\Om}\frac{1}{|x-y|^{n+2s}}\;dy,\;\;x\in\Omb.
\end{align*}
Our first main result (Theorem \ref{thm-38}) shows that the realization in $L^2(\Om)$ of $(-\Delta)^s$ with the nonlocal Neumann exterior condition is the selfadjoint operator $A_N$ associated with the closed, symmetric and densely defined bilinear form $\gota_N:D(\gota_N)\times D(\gota_N)\to\RR$ given by
\begin{align*} 
D(\gota_N):=\Big\{u\in L^2(\Om):\; u_N\in W_\Omega^{s,2} \Big\}\;\mbox{ and } \gota_N(u,v):=\mathcal E(u_N,v_N).
\end{align*}
The {\bf nonlocal Neumann exterior condition} is characterized by
\begin{align}\label{nbc}
\mathcal N^su_N=0\;\;\mbox{ in }\;\Omb,
\end{align}
where the operator $\mathcal N^s$ is defined for a function $v\in W_\Omega^{s,2}$ by
\begin{align*}
\mathcal N^sv(x):=C_{n,s}\int_{\Om}\frac{v(x)-v(y)}{|x-y|^{n+2s}}\;dy,\;\;x\in\Omb.
\end{align*}
We prove that  $-A_N$ generates a submarkovian semigroup $T_N$ on $L^2(\Omega)$ which is also ultracontractive in the sense that it maps $L^1(\Om)$ into $L^\infty(\Om)$.

\item[(ii)] Our second main result (Theorem \ref{thm-311}) concerns the nonlocal Robin exterior condition. For this, let $\beta\in L^1(\Omc)$ be a non-negative given function. 
For a function $u\in L^2(\Om)$ we define its extension \(u_R\) as follows:
\begin{align*} 
    u_R(x):=\begin{cases} u(x) &\text{ if } x \in \Omega, \\
 \displaystyle   \frac{C_{n,s}}{C_{n,s}\rho(x)+\beta(x)}\int_\Omega \frac{u(y)}{\vert x-y\vert^{n+2s}} dy &\text{ if } x\in\Omb.
    \end{cases}
\end{align*}
The realization in $L^2(\Om)$ of $(-\Delta)^s$ with the nonlocal Robin exterior condition is the selfadjoint operator $A_R$ associated with the closed, symmetric and densely defined bilinear form $\gota_R:D(\gota_R)\times D(\gota_R)\to\RR$ given by
\begin{align*} 
    D(\gota_R):=\Big\{ u \in L^2(\Om):\;u_R \in W^{s,2}_\Omega\cap L^2(\Omc,\beta dx)\Big \},
\end{align*}
 and 
$$\gota_R(u,v):=\mathcal E(u_R,v_R) +\int_{\Omc}\beta u_Rv_R\;dx.$$
The {\bf nonlocal Robin exterior condition} is characterized by
\begin{align}\label{rbc}
\mathcal N^su_R+\beta u_R=0\;\;\mbox{ in }\;\Omb.
\end{align}
We obtain that $-A_R$ generates a submarkovian semigroup $T_R$ on $L^2(\Omega)$ which is also ultracontractive.

\item[(iii)] Our third main result (Theorem \ref{thm-43}) shows that the semigroup $T_R$ is always sandwiched between the semigroup $T_D$ on $L^2(\Om)$ generated by the realization of $(-\Delta)^s$ in $L^2(\Om)$ with the zero {\bf Dirichlet exterior condition} $\tilde u=0$ in $\Omc$ and the semigroup $T_N$. That is, we have 
\begin{align*}
0\le T_D\le T_R\le T_N
\end{align*}
in the sense of \eqref{dom} below.
\end{enumerate}

Another novelty of the present paper is that contrarily to the local case $s=1$ or the regional fractional Laplace case, where the proofs of the submarkovian property and the domination of the semigroups are standard, for the case of $(-\Delta)^s$ investigated here, the proofs of the mentioned results require a careful analysis of the associated bilinear forms.

Let us mention that we shall give in Section \ref{sec-3} an alternative definition where the bilinear forms $\gota_N$ and $\gota_R$ are given by the infimum of certain functions, but this definition is more difficult to use to prove most of the results obtained in the present paper.

Fractional order operators (in particular the fractional Laplacian) have recently emerged as a modeling alternative in various branches of science.  They usually describe anomalous diffusion. A number of stochastic models for explaining anomalous diffusion have been introduced in the literature; among them we  quote the fractional Brownian motion; the continuous time random walk;  the L\'evy flights; the Schneider gray Brownian motion; and more generally, random walk models based on evolution equations of single and distributed fractional order in  space (see e.g. \cite{DS,GR,Man,Sch}).  In general a fractional diffusion operator corresponds to a diverging jump length variance in the random walk. In the literature the fractional Laplace operator is known as the generator of the so called $s$-stable L\'evy process.

The rest of the paper is structured as follows. In Section \ref{sec-prel} we introduce the function spaces needed to study our problem and recall some well-known results on Dirichlet forms and domination of semigroups that are used throughout the paper. In Section \ref{sec-3} we give a characterization of the realizations in $L^2(\Om)$ of $(-\Delta)^s$ with the three exterior conditions (Dirichlet, Neumann and Robin). We show that each of  these operators generates a  submarkovian semigroup which is also ultracontractive. The result concerning the domination of the semigroups is contained in Section \ref{sec-dom}. We conclude the paper by given some open problems in Section \ref{open}.

\section{Functional setup and preliminaries}\label{sec-prel}

Here we introduce the function spaces needed to investigate our problem, give a rigorous definition of $(-\Delta)^s$  and recall some known results on semigroups theory.

\subsection{Fractional order Sobolev spaces and the fractional Laplacian} Unless otherwise stated, $\Om \subset\RR^n$ ($n \ge 1$) is an arbitrary bounded  open set and $0 < s < 1$ is a real number.  Let 
 \begin{align}\label{SB}
    W^{s,2}(\Om) := \left\{ u \in L^2(\Om) :\; 
            \int_\Om\int_\Om \frac{|u(x)-u(y)|^2}{|x-y|^{n+2s}}\;dxdy < \infty \right\} 
 \end{align}
be endowed with the norm 
 \begin{align*}
    \|u\|_{W^{s,2}(\Om)} := \left(\int_\Om |u|^2\;dx 
        + \int_\Om\int_\Om  \frac{|u(x)-u(y)|^2}{|x-y|^{n+2s}}\;dxdy \right)^{\frac12}.
 \end{align*}
We define 
 \begin{align*}
    W^{s,2}_0(\overline\Om) := \left\{ u \in W^{s,2}(\RR^n) :\; u = 0 \mbox{ in } 
            \RR^n\setminus\Om \right\}=\Big\{u\in W^{s,2}(\RR^n):\; \mbox{supp}[u]\subset\bOm\Big\},
 \end{align*}
 where $W^{s,2}(\RR^n)$ is defined as in \eqref{SB} with $\Omega$ replaced by $\RR^n$. We have used $\bOm$ in the definition of $W^{s,2}_0(\overline\Om)$ in the spirit to avoid a confusion with the well-known space $W_0^{s,2}(\Omega):=\overline{\mathcal D(\Omega)}^{W^{s,2}(\Omega)}$.
 We set
 \begin{align*}
 \widetilde  W^{s,2}_0(\Om):=\Big\{u|_{\Omega}:u\in  W^{s,2}_0(\overline\Om)\Big\}.
 \end{align*}
We also define the  local fractional order Sobolev space
 \begin{equation}\label{eq:Ws2loc}
    W^{s,2}_{\rm loc}(\Omb) := \left\{ u \in L_{\rm loc}^2(\Omb):\; u\varphi \in W^{s,2}(\Omb) 
         \; \forall \ \varphi \in \mathcal{D}(\Omb) \right\}.  
 \end{equation} 
 
 \begin{remark}\label{rem-sob}
 {\em 
It is well-known that the following continuous embeddings hold:
\begin{align}\label{sob-em}
 \widetilde  W^{s,2}_0(\Om)\hookrightarrow
\begin{cases}
L^{\frac{2n}{n-2s}}(\Omega)\;\;&\mbox{ if }\; n>2s,\\
L^p(\Omega)\;\qquad \qquad\forall\;  p\in [1,\infty) \;&\mbox{ if }\; n=2s,\\
C^{0,1-\frac{n}{2s}}(\bOm)&\mbox{ if }\; n<2s.
\end{cases}
\end{align}
In addition to (\ref{sob-em}), we know that the embedding $\widetilde  W^{s,2}_0(\Om)\hookrightarrow L^2(\Omega)$ is compact.  If $\Omega$ has a Lipschitz continuous boundary, then \eqref{sob-em} also holds with $\widetilde  W^{s,2}_0(\Om)$ replaced with $W^{s,2}(\Omega)$. We refer to \cite[Chapter 1]{Gris}  for the proof of the above results (see also \cite{NPV} and the references therein).
}
\end{remark}

We have the following result.

\begin{lem}\label{lem-Girs}
The following assertions hold.
\begin{enumerate}
\item $\mathcal D(\Om)\subset \widetilde  W^{s,2}_0(\Om)$.
\item If $\Om$ has a continuous boundary, then $\mathcal D(\Omega)$ is dense in $\widetilde  W^{s,2}_0(\Om)$.
\end{enumerate}
\end{lem}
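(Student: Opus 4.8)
For part (a), the plan is to extend a test function by zero and check directly that the extension lies in $W^{s,2}(\RR^n)$. Given $\varphi\in\mathcal D(\Om)$, let $\tilde\varphi$ be its extension by $0$ to $\RR^n$; then $\operatorname{supp}\tilde\varphi=\operatorname{supp}\varphi$ is a compact subset of $\Om\subset\bOm$, so it suffices to show $\tilde\varphi\in W^{s,2}(\RR^n)$, after which $\tilde\varphi\in W^{s,2}_0(\overline\Om)$ and hence $\varphi=\tilde\varphi|_\Om\in\widetilde W^{s,2}_0(\Om)$ by definition. Since $\tilde\varphi\in L^2(\RR^n)$ is clear, the remaining task is to bound the Gagliardo seminorm, which I would do by splitting $\RR^n\times\RR^n$ into $\{|x-y|<1\}$ and $\{|x-y|\ge1\}$: on the far region use $|\tilde\varphi(x)-\tilde\varphi(y)|\le2\|\varphi\|_\infty$ together with the fact that a nonzero integrand forces $x$ or $y$ into the compact set $\operatorname{supp}\varphi$, and $\int_{|z|\ge1}|z|^{-n-2s}\,dz<\infty$; on the near region use the global Lipschitz bound $|\tilde\varphi(x)-\tilde\varphi(y)|\le\|\nabla\varphi\|_\infty|x-y|$, reducing matters to $\|\nabla\varphi\|_\infty^2\int_{|z|<1}|z|^{2-n-2s}\,dz$, which is finite because $2-2s>0$. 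This part is routine.

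For part (b), note that extension by $0$ identifies $\widetilde W^{s,2}_0(\Om)$ (with its natural norm) isometrically with $W^{s,2}_0(\overline\Om)$ carrying the norm of $W^{s,2}(\RR^n)$, so the statement reduces to: $\mathcal D(\Om)$, extended by $0$, is dense in $W^{s,2}_0(\overline\Om)\subset W^{s,2}(\RR^n)$. The plan is a translate-then-mollify argument, for which I would record three standard properties of $W^{s,2}(\RR^n)$, all most transparent from the identification $W^{s,2}(\RR^n)=H^s(\RR^n)$ via the Fourier transform (see \cite{NPV}): (i) $\psi u\in W^{s,2}(\RR^n)$ whenever $u\in W^{s,2}(\RR^n)$ and $\psi\in C_c^\infty(\RR^n)$, proved from $|\psi(x)u(x)-\psi(y)u(y)|\le\|\psi\|_\infty|u(x)-u(y)|+|u(y)|\,|\psi(x)-\psi(y)|$ and a near/far split as in (a); (ii) translations act strongly continuously, i.e. $\|u(\cdot+h)-u\|_{W^{s,2}(\RR^n)}\to0$ as $h\to0$ (dominated convergence on the Fourier side); and (iii) for a standard mollifier $(\eta_\ep)_{\ep>0}$, $\eta_\ep*u\to u$ in $W^{s,2}(\RR^n)$, with $\eta_\ep*u\in C^\infty$ and $\operatorname{supp}(\eta_\ep*u)\subset\operatorname{supp}u+\overline{B_\ep}$.

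Next, since $\Om$ has continuous boundary, it satisfies the segment condition: there are finitely many open sets $U_1,\dots,U_N$ covering $\partial\Om$ and nonzero vectors $y_1,\dots,y_N\in\RR^n$ with $z\in\bOm\cap U_j\Rightarrow z+ty_j\in\Om$ for all $t\in(0,1)$. Choose $U_0$ with $\overline{U_0}\subset\Om$ so that $U_0,\dots,U_N$ cover $\bOm$, together with a subordinate partition of unity $\psi_0,\dots,\psi_N$ satisfying $\sum_{j=0}^N\psi_j\equiv1$ near $\bOm$. For $u\in W^{s,2}_0(\overline\Om)$ write $u=\sum_{j=0}^N\psi_j u$, where every $\psi_j u$ lies in $W^{s,2}(\RR^n)$ by (i) and vanishes outside $\Om$. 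Since $\psi_0 u$ has compact support in $\Om$, $\eta_\ep*(\psi_0u)\in\mathcal D(\Om)$ for $\ep$ small and converges by (iii). For $j\ge1$ put $(\psi_j u)^t(z):=(\psi_j u)(z-ty_j)$; its support lies in $\bigl(\operatorname{supp}\psi_j\cap\bOm\bigr)+ty_j$, which is compact and, by the segment property, contained in $\Om$, so for each fixed $t\in(0,1)$ and $\ep$ below the (positive) distance of that set to $\partial\Om$ one has $\eta_\ep*(\psi_j u)^t\in\mathcal D(\Om)$; by (ii) and (iii) this converges to $\psi_j u$ in $W^{s,2}(\RR^n)$ upon letting $\ep\to0$ and then $t\to0$. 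Summing over $j$ and a diagonal selection then yields a sequence in $\mathcal D(\Om)$ converging to $u$, and transporting this back through the isometry gives the claimed density.

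The one genuinely delicate point is the boundary terms $j\ge1$: one must use the continuity of $\partial\Om$, packaged through the segment condition, to guarantee that the translate of $\psi_j u$ has its support pushed strictly off $\partial\Om$, so that the subsequent mollification keeps it inside $\Om$. Everything else --- the two near/far splittings, the module property (i), and the continuity statements (ii)--(iii) --- is routine once one works on the Fourier side. Should one wish to bypass the segment-condition machinery, this density statement for sets with continuous boundary can also be quoted from the literature (see \cite{Gris}, and also \cite{NPV}).
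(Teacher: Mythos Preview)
Your argument is correct and considerably more detailed than the paper's own proof, which consists solely of a citation to \cite[Theorem 1.4.2.2 and Corollary 1.4.4.5]{Gris} (with a pointer to \cite{Val}). What you have written is in effect a self-contained sketch of how those references establish the result: the direct near/far estimate of the Gagliardo seminorm for (a), and the segment-condition translate-then-mollify machinery for (b). This buys independence from the literature at the cost of length; the paper simply outsources the work.

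One small point worth tightening in (a): after applying the Lipschitz bound on the near region $\{|x-y|<1\}$, the resulting double integral $\int_{\RR^n}\int_{|z|<1}\|\nabla\varphi\|_\infty^2|z|^{2-n-2s}\,dz\,dx$ is not finite as written, since the outer integral runs over all of $\RR^n$. You need the same support observation you already made for the far region---namely that the integrand vanishes unless $x$ lies in the compact set $\operatorname{supp}\varphi+\overline{B_1}$---to reduce the outer integral to one over a set of finite measure. This is routine and clearly in the spirit of what you wrote, but should be stated explicitly.
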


\begin{proof}
The proof  is contained in \cite[Theorem 1.4.2.2 and Corollary 1.4.4.5]{Gris} (see also \cite{Val}). 
\end{proof}

\begin{remark}\label{rem-obs}
{\em
We observe the following facts.
Let
 \begin{align*}
W_0^{s,2}(\Omega):=\overline{\mathcal D(\Omega)}^{W^{s,2}(\Omega)}.
\end{align*}
Assume that $\Omega$ has a Lipschitz continuous boundary $\pOm$.
Then, by \cite[Corollary 1.4.4.10]{Gris} for every $0<s<1$, 
\begin{align} \label{sobo2}
\widetilde  W^{s,2}_0(\Om)=\Big\{u\in W_0^{s,2}(\Omega):\; \frac{u}{\delta^s}\in L^2(\Omega)\Big\},
\end{align}
where $\delta(x):=\mbox{dist}(x,\pOm)$, $x\in\Omega$.
By \cite[Corollary 1.4.4.5]{Gris} if $s\ne \frac 12$, then $W_0^{s,2}(\Omega)=\widetilde  W^{s,2}_0(\Om)$. But if $s=\frac 12$, then $\widetilde  W^{\frac 12,2}_0(\Om)$ is a proper subspace of $W_0^{\frac 12,2}(\Omega)$. Notice also that $W_0^{s,2}(\Omega)=W^{s,2}(\Omega)$ for every $0<s\le \frac 12$ (see e.g. \cite{Chen,Gris,War}).
}
\end{remark}

For more information on fractional order Sobolev spaces we refer to \cite{NPV,Val,Gris,War}.

Next, let $\beta\in L^1(\Omc)$ be fixed and define the fractional order Sobolev type space
\begin{align*}
W_{\beta,\Omega}^{s,2}:=\Big\{u:\RR^n\to\RR\;\mbox{ measurable}:\;\|u\|_{W_{\beta,\Omega}^{s,2}}<\infty\Big\}
\end{align*}
where 
\begin{align}\label{norm-RV}
\|u\|_{W_{\beta,\Omega}^{s,2}}:=\left(\int_{\Omega}|u|^2\;dx+\int_{\Omc}|u|^2|\beta|\;dx+\int_{\RR^{2n}\setminus(\RR^n\setminus\Omega)^2}\frac{|u(x)-u(y)|^2}{|x-y|^{n+2s}}\;dxdy\right)^{\frac 12},
\end{align}
and
 \begin{align*}
\RR^{2n}\setminus(\RR^n\setminus\Om)^2
       = (\Om\times\Om)\cup(\Om\times(\RR^n\setminus\Om))\cup((\RR^n\setminus\Om)\times\Om).
\end{align*}
The space $W_{\beta,\Omega}^{s,2}$ has been introduced in \cite{SDipierro_XRosOton_EValdinoci_2017a} to study the Neumann problem for $(-\Delta)^s$ (see \eqref{NePr}). It also appears in a more general form in \cite{FKV15} and has been used there to study the Dirichlet problem for $(-\Delta)^s$ (see \eqref{DiPr}). If $\beta=0$, then we shall denote $W_{0,\Omega}^{s,2}=W_{\Omega}^{s,2}$. It is clear that $W_{\beta,\Omega}^{s,2}\hookrightarrow W_{\Omega}^{s,2}$.

The proof of the following result is contained in \cite[Proposition 3.1]{SDipierro_XRosOton_EValdinoci_2017a}.

\begin{lem}\label{lem22}
Let $\beta\in L^1(\Omc)$. Then $W_{\beta,\Omega}^{s,2}$ endowed with the norm \eqref{norm-RV} is a Hilbert space.
\end{lem}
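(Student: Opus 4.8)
The plan is to exhibit an inner product on $W^{s,2}_{\beta,\Omega}$ inducing the norm $\|\cdot\|_{W^{s,2}_{\beta,\Omega}}$ of \eqref{norm-RV}, and then to prove that this norm is complete; a complete pre-Hilbert space is a Hilbert space. For the first part I would set
\[
\langle u,v\rangle := \int_{\Omega} uv\,dx + \int_{\Omc}uv\,|\beta|\,dx + \int_{\RR^{2n}\setminus(\Omc)^2}\frac{\big(u(x)-u(y)\big)\big(v(x)-v(y)\big)}{|x-y|^{n+2s}}\,dx\,dy,
\]
which is finite on $W^{s,2}_{\beta,\Omega}\times W^{s,2}_{\beta,\Omega}$ by the Cauchy--Schwarz inequality applied separately to the three positive measures involved, is bilinear and symmetric, and satisfies $\langle u,u\rangle=\|u\|_{W^{s,2}_{\beta,\Omega}}^2$. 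The only non-routine point here is positive definiteness: if $\|u\|_{W^{s,2}_{\beta,\Omega}}=0$ then $u=0$ a.e.\ in $\Omega$, whence the cross integral over $\Omega\times\Omc$, which is dominated by the (vanishing) Gagliardo term, collapses by Tonelli to $\int_{\Omc}|u(y)|^2\big(\int_{\Omega}|x-y|^{-n-2s}\,dx\big)\,dy=0$; since the inner integral is strictly positive for every $y\in\Omc$ (it equals $\rho(y)\in(0,\infty)$ when $y\in\Omb$), this forces $u=0$ a.e.\ in $\Omc$ as well, so $u=0$ in $W^{s,2}_{\beta,\Omega}$.

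The crux of the argument, and the step I expect to be the main obstacle, is that the norm \eqref{norm-RV} contains no term controlling $\|u\|_{L^2(E)}$ for a bounded set $E\subset\Omc$ on which $\beta$ may vanish, so such control must be recovered from the cross term $\int_{\Omega}\int_{\Omc}|x-y|^{-n-2s}|u(x)-u(y)|^2\,dx\,dy$ alone. Fixing a bounded measurable $E\subset\Omc$ and setting $d:=\sup\{|x-y|:x\in\Omega,\,y\in E\}<\infty$ (finite since $\Omega$ and $E$ are bounded), I would use the elementary inequality $|a-b|^2\ge\tfrac12|b|^2-|a|^2$ together with $|x-y|^{-n-2s}\ge d^{-n-2s}$ to obtain, for every $g\in W^{s,2}_{\beta,\Omega}$,
\[
\frac{|\Omega|}{2\,d^{n+2s}}\,\|g\|_{L^2(E)}^2 \;\le\; \int_{\Omega}\!\int_{E}\frac{|g(x)-g(y)|^2}{|x-y|^{n+2s}}\,dx\,dy \;+\; \frac{|E|}{d^{n+2s}}\,\|g\|_{L^2(\Omega)}^2 \;\le\; C_E\,\|g\|_{W^{s,2}_{\beta,\Omega}}^2 ,
\]
where $|\Omega|>0$ because $\Omega$ is a nonempty open set. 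Thus convergence in $W^{s,2}_{\beta,\Omega}$ entails convergence in $L^2(E)$ for every bounded $E\subset\Omc$.

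For completeness, let $(u_k)$ be Cauchy in $W^{s,2}_{\beta,\Omega}$. The first term of the norm shows that $(u_k|_\Omega)$ is Cauchy in $L^2(\Omega)$, with limit $v$; applying the displayed estimate to $g=u_k-u_j$ along the exhaustion $E_m:=\Omc\cap B_m(0)$ shows that $(u_k|_{E_m})$ is Cauchy in $L^2(E_m)$ for each $m$, and these limits are mutually compatible, so they define a measurable function $u$ on $\Omc$ with $u_k\to u$ in $L^2(E_m)$ for all $m$; put $u:=v$ on $\Omega$. A diagonal argument over $\Omega$ and the $E_m$ yields a subsequence along which $u_k\to u$ a.e.\ on $\RR^n$. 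Since Cauchy sequences are bounded, Fatou's lemma applied to each of the three terms of \eqref{norm-RV} gives $u\in W^{s,2}_{\beta,\Omega}$; and, given $\var>0$, choosing $N$ with $\|u_k-u_j\|_{W^{s,2}_{\beta,\Omega}}<\var$ for $j,k\ge N$, fixing $k\ge N$ and letting $j\to\infty$ along the subsequence with Fatou's lemma on each term yields $\|u_k-u\|_{W^{s,2}_{\beta,\Omega}}\le\var$. Hence $u_k\to u$ in $W^{s,2}_{\beta,\Omega}$, the space is complete, and therefore it is a Hilbert space. The remaining verifications — finiteness of $\langle u,v\rangle$, the elementary pointwise inequality, and the compatibility of the local $L^2$-limits — are routine.
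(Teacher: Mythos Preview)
Your proof is correct. The paper itself does not give a proof of this lemma; it simply refers to \cite[Proposition~3.1]{SDipierro_XRosOton_EValdinoci_2017a}. Your direct, self-contained argument correctly isolates the only genuinely delicate point: since $\beta$ may vanish on parts of $\Omc$, the norm \eqref{norm-RV} carries no explicit control of $u|_{\Omc}$ there, and you recover $L^2_{\rm loc}(\Omc)$-control from the cross Gagliardo term via the estimate
\[
\frac{|\Omega|}{2\,d^{n+2s}}\,\|g\|_{L^2(E)}^2 \;\le\; \int_{\Omega}\!\int_{E}\frac{|g(x)-g(y)|^2}{|x-y|^{n+2s}}\,dx\,dy \;+\; \frac{|E|}{d^{n+2s}}\,\|g\|_{L^2(\Omega)}^2,
\]
obtained by first using $|x-y|^{-n-2s}\ge d^{-n-2s}$ on the nonnegative integrand and only then applying $|a-b|^2\ge\tfrac12|b|^2-|a|^2$. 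This, together with the observation that $\int_{\Omega}|x-y|^{-n-2s}\,dx>0$ for every $y\in\Omc$ (needed for positive definiteness even when $\beta\equiv 0$), is exactly what makes the argument work; the remaining Fatou/diagonal steps are routine.
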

 
To introduce the fractional Laplace operator  we set 

\begin{equation*}
\mathcal{L}_s^{1}(\RR^n):=\left\{u:\RR^n\rightarrow
\mathbb{R}\;\mbox{ measurable}: \;\int_{\RR^n}\frac{|u(x)|}{(1+|x|)^{n+2s}}%
\;dx<\infty \right\}.
\end{equation*}
For $u\in \mathcal{L}_s^{1}(\RR^n)$ and $ 
\varepsilon >0$ we let
\begin{equation*}
(-\Delta )_{\varepsilon }^{s}u(x):=C_{n,s}\int_{\left\{y\in \RR^n:|y-x|>\varepsilon \right\}}
\frac{u(x)-u(y)}{|x-y|^{n+2s}}dy,\;\;x\in\RR^n,
\end{equation*}%
where the normalization constant $C_{n,s}$ is given by
\begin{equation}\label{CN}
C_{n,s}:=\frac{s2^{2s}\Gamma\left(\frac{2s+n}{2}\right)}{\pi^{\frac
n2}\Gamma(1-s)},
\end{equation}%
and $\Gamma $ is the usual Euler Gamma function. The {\bf fractional Laplace operator} 
$(-\Delta )^{s}$ is defined for $u\in \mathcal{L}_s^{1}(\RR^n)$  by the formula:
\begin{align}
(-\Delta )^{s}u(x):=C_{n,s}\mbox{P.V.}\int_{\RR^n}\frac{u(x)-u(y)}{|x-y|^{n+2s}}dy 
=\lim_{\varepsilon \downarrow 0}(-\Delta )_{\varepsilon
}^{s}u(x),\;\;x\in\RR^n,\label{eq11}
\end{align}%
provided that the limit exists for a.e. $x\in\RR^n$. We have that $\mathcal{L}_s^{1}(\RR^n)$ is the right space for which $v:=(-\Delta )_{\varepsilon }^{s}u$ exists for every $\varepsilon>0$ and $v$ being also continuous at the continuity points of $u$.  Throughout the following we shall write \((-\Delta)^s u \in L^2(\Om)\) if the limit in \eqref{eq11} exists almost everywhere, and the function \(x \mapsto (-\Delta)^s u(x)$ belongs to $L^2(\Om)\). 

 If, for a given function \(u\), \((-\Delta )^{s}u \in L^2(\RR^n)\), then 
\begin{align*}
(-\Delta)^su=\frac{1}{\Gamma(-s)}\int_0^\infty\left(e^{t\Delta}u-u\right)\frac{dt}{t^{1+s}}
\end{align*}
where $\Gamma(-s)=-\frac{\Gamma(1-s)}{s}$ and $(e^{t\Delta})_{t\ge 0}$ is the semigroup on $L^2(\mathbb R^n)$ generated by  $\Delta$.  That is, we can define $(-\Delta)^s$ as the fractional $s$-power of the classical Laplacian \(-\Delta\).  We refer to \cite{S-T} and their references for more details.
Furthermore, $(-\Delta )^{s}$ can be also defined as the pseudo-differential operator with symbol $|\xi|^{2s}$ by using Fourier transforms (see e.g. \cite{NPV}).

For more details on the fractional Laplace operator, we refer to  \cite{BCF,Caf3,Caf1,Caf2,NPV,War} and their references.

Next, for $u \in W_\Omega^{s,2}$ we define the nonlocal normal derivative $\mathcal{N}^su$ of $u$ as follows:
 \begin{align}\label{NLND}
    \mathcal{N}^s u(x) := C_{n,s} \int_\Om \frac{u(x)-u(y)}{|x-y|^{n+2s}}\;dy, 
            \quad x \in \RR^n \setminus \overline\Om . 
 \end{align}
Clearly, $\mathcal{N}^s$ is a nonlocal operator and is well defined on $ W_\Omega^{s,2}$ as shows the following result.

 \begin{lem}\label{lem:Nmap}
  The nonlocal normal derivative $\mathcal{N}^s$ maps $W_\Omega^{s,2}$ continuously into
 $ L^{2}_{\rm loc}(\Omb)$. 
 \end{lem}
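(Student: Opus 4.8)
The plan is to show that $\mathcal{N}^s u \in L^2(K)$ for every compact set $K \subset \Omb$, with a bound depending only on $\dist(K, \bOm)$ and $\|u\|_{W_\Omega^{s,2}}$, since continuity into $L^2_{\rm loc}(\Omb)$ then follows from the linearity of $\mathcal{N}^s$ and the definition of the Fr\'echet space $L^2_{\rm loc}(\Omb)$. So fix a compact $K \subset \Omb$ and set $d := \dist(K, \bOm) > 0$. For $x \in K$ and $y \in \Om$ we have $|x-y| \ge d$, which will let us tame the singular kernel away from the diagonal; the remaining issue is that the integral defining $\mathcal{N}^s u(x)$ still involves the full oscillation of $u$ on $\Om$.

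First I would split $\mathcal{N}^s u(x)$ in the obvious way: write
\begin{align*}
\mathcal{N}^s u(x) = C_{n,s}\left( u(x)\int_\Om \frac{dy}{|x-y|^{n+2s}} - \int_\Om \frac{u(y)}{|x-y|^{n+2s}}\;dy\right) = C_{n,s}\big(\rho(x)u(x) - (\mathcal{K}u)(x)\big),
\end{align*}
but note that here $u$ need not have pointwise values on $\Omb$, so I would instead argue directly from the finiteness of the double integral $\int_\Om\int_\Omc \frac{|u(x)-u(y)|^2}{|x-y|^{n+2s}}\,dx\,dy$, which is one of the terms controlled by $\|u\|_{W_\Omega^{s,2}}^2$. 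The key observation is that for $x \in K$ the kernel $y \mapsto |x-y|^{-(n+2s)}$ is bounded above by $d^{-(n+2s)}$ and below, on the bounded set $\Om$, by $(\diam(\Om) + \dist(K,\Om))^{-(n+2s)} =: c_K > 0$. Therefore, by Cauchy--Schwarz in the $y$-variable,
\begin{align*}
|\mathcal{N}^s u(x)|^2 \le C_{n,s}^2 \left(\int_\Om \frac{dy}{|x-y|^{n+2s}}\right)\left(\int_\Om \frac{|u(x)-u(y)|^2}{|x-y|^{n+2s}}\;dy\right) \le C_{n,s}^2\, \frac{|\Om|}{d^{n+2s}}\int_\Om \frac{|u(x)-u(y)|^2}{|x-y|^{n+2s}}\;dy.
\end{align*}

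Then I would integrate this inequality over $x \in K$ and use Tonelli's theorem to obtain
\begin{align*}
\int_K |\mathcal{N}^s u(x)|^2\,dx \le C_{n,s}^2\,\frac{|\Om|}{d^{n+2s}}\int_K\int_\Om \frac{|u(x)-u(y)|^2}{|x-y|^{n+2s}}\;dy\,dx \le C_{n,s}^2\,\frac{|\Om|}{d^{n+2s}}\int_\Omc\int_\Om \frac{|u(x)-u(y)|^2}{|x-y|^{n+2s}}\;dy\,dx,
\end{align*}
and the last double integral is bounded by $\|u\|_{W_\Omega^{s,2}}^2$ (up to the constant from the symmetrization in the definition \eqref{norm-RV}). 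This gives $\|\mathcal{N}^s u\|_{L^2(K)} \le C(K)\,\|u\|_{W_\Omega^{s,2}}$ with $C(K)$ depending only on $n$, $s$, $|\Om|$ and $d = \dist(K,\bOm)$, which is exactly continuity of $\mathcal{N}^s\colon W_\Omega^{s,2} \to L^2_{\rm loc}(\Omb)$. I do not expect a serious obstacle here; the only point requiring a little care is the justification that the pointwise Cauchy--Schwarz bound is legitimate, i.e. that $\mathcal{N}^s u(x)$ is well-defined for a.e.\ $x \in K$ — this follows because the right-hand side is finite for a.e.\ $x$ by the finiteness of the double integral, so $y \mapsto \frac{u(x)-u(y)}{|x-y|^{n+2s}}$ is in $L^1(\Om)$ for a.e.\ such $x$ (again by Cauchy--Schwarz against the bounded kernel on the bounded set $\Om$).
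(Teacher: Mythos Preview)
Your proof is correct. The Cauchy--Schwarz argument on each compact $K\subset\Omb$ is clean: the bound $|x-y|\ge d$ on $K\times\Om$ makes $\rho(x)\le |\Om|\,d^{-(n+2s)}$, and then integrating over $K$ and invoking the double integral that appears in the $W_\Omega^{s,2}$-norm gives exactly the continuity estimate you wrote. The side remark on the lower bound $c_K$ is unused and can be dropped, and the first displayed splitting $\mathcal N^su(x)=C_{n,s}(\rho(x)u(x)-(\mathcal K u)(x))$ is a red herring that you rightly abandon; you might excise both for clarity.

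The paper's own proof takes a different route: it simply cites \cite[Lemma~3.2]{ghosh2016calder}, which actually proves the stronger statement that $\mathcal N^s$ maps $W_\Omega^{s,2}$ into $W^{s,2}_{\rm loc}(\Omb)$, and then uses the embedding $W^{s,2}_{\rm loc}(\Omb)\subset L^2_{\rm loc}(\Omb)$. So compared with the paper, your argument is more elementary and fully self-contained, but yields only the $L^2_{\rm loc}$ conclusion that is stated, whereas the cited result gives additional fractional regularity of $\mathcal N^su$ in the exterior. For the purposes of this paper (e.g.\ the closedness arguments in Theorems~\ref{thm-38} and~\ref{thm-311}) only the $L^2_{\rm loc}$ continuity is used, so your direct approach is entirely adequate here.
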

 
 \begin{proof}
 It has been shown in \cite[Lemma~3.2]{ghosh2016calder} that $\mathcal{N}^s$ maps $ W_\Omega^{s,2}$ into
 $ W^{s,2}_{\rm loc}(\Omb)\subset  L^{2}_{\rm loc}(\Omb)$.  The continuity of the mapping can be shown by using the formula of $\mathcal{N}^su$ given in \cite[Lemma~3.2]{ghosh2016calder}.
 \end{proof}
 
Despite the fact that $\mathcal{N}_s$ is defined on $\Omb$, it
is still known as a ``normal derivative''. This is due to its similarity with the 
classical normal derivative as shows the following result.

 \begin{proposition}\label{prop:prop}
 Let $\Omega\subset\RR^n$ be a bounded open set with a Lipschitz continuous boundary.
 Then the following assertions hold.
  \begin{enumerate}
    \item {\bf The divergence theorem}:  Let $u\in C_0^2(\RR^n):=\left\{u\in C^2(\RR^n):\; \lim_{|x|\to\infty} u(x)=0\right\}$. Then
  \begin{align*}
       \int_\Om (-\Delta)^s u\;dx = -\int_{\RR^n\setminus\Om} \mathcal{N}^s u\;dx.
     \end{align*}
      
    \item {\bf The integration by parts formula}:   Let $u \in W_\Omega^{s,2}$ be such that $(-\Delta)^su \in L^2(\Omega)$ and $\mathcal N_su\in L^2(\Omc)$. Then for every $v\in W_\Omega^{s,2}\cap L^2(\Omc)$ we have 
    \begin{align}\label{Int-Part}
       \int_\Om v(-\Delta)^s u\;dx =& \frac{C_{n,s}}{2} 
        \int\int_{\RR^{2n}\setminus(\RR^n\setminus\Om)^2} 
         \frac{(u(x)-u(y))(v(x)-v(y))}{|x-y|^{n+2s}} \;dxdy 
         - \int_{\RR^n\setminus\Om} v\mathcal{N}^s u\;dx  \nonumber \\
         &= \mathcal{E} (u,v) -  \int_{\RR^n\setminus\Om} v\mathcal{N}^s u\;dx
      \end{align}

    \item {\bf The limit as $s\uparrow 1^-$}:    Let $u,v\in C_0^2(\RR^n)$. Then
      \begin{align*}
       \lim_{s\uparrow 1^-}\int_{\RR^n\setminus\Om} v \mathcal{N}^s u\;dx 
        = \int_{\pOm} v\partial_{\nu}u\; d\sigma . 
      \end{align*}  
  \end{enumerate}
 \end{proposition}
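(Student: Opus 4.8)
\textbf{Proof strategy for Proposition \ref{prop:prop}.}

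The plan is to treat the three assertions separately but with a common computational core: in each case one writes out the relevant singular integrals, splits $\RR^n = \Om \cup (\RR^n\setminus\Om)$ in the integration variable, and uses Fubini together with the (anti)symmetrization trick $|x-y|^{-(n+2s)}$ is symmetric in $(x,y)$. For part (a), I would start from the pointwise formula $(-\Delta)^su(x) = C_{n,s}\,\mathrm{P.V.}\int_{\RR^n}\frac{u(x)-u(y)}{|x-y|^{n+2s}}\,dy$, valid for $u\in C_0^2(\RR^n)$ since such $u$ lies in $\mathcal L^1_s(\RR^n)$ and has enough regularity for the principal value to exist and be locally bounded. Integrating over $\Om$ and splitting the $y$-integral, the contribution from $y\in\Om$ is $C_{n,s}\int_\Om\int_\Om\frac{u(x)-u(y)}{|x-y|^{n+2s}}\,dy\,dx$, which vanishes by antisymmetry under $x\leftrightarrow y$ (one must justify interchanging the P.V.\ with the outer integral, e.g.\ by dominated convergence using the $C^2$ bound near the diagonal). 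What remains is $C_{n,s}\int_\Om\int_{\RR^n\setminus\Om}\frac{u(x)-u(y)}{|x-y|^{n+2s}}\,dy\,dx$, and applying Fubini (the integrand is absolutely integrable since $x$ and $y$ are separated off a null set and $u$ is bounded with decay) this equals $\int_{\RR^n\setminus\Om}\Big(C_{n,s}\int_\Om\frac{u(x)-u(y)}{|x-y|^{n+2s}}\,dx\Big)dy = -\int_{\RR^n\setminus\Om}\mathcal N^su(y)\,dy$, which is exactly the claim after swapping the roles of the names $x,y$ in the definition \eqref{NLND}.

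For part (b), the integration by parts formula, I would use the elementary algebraic identity for symmetric kernels: writing $K(x,y) = |x-y|^{-(n+2s)}$ and using the decomposition of $\RR^{2n}$ into $(\Om\times\Om)\cup(\Om\times\Omc)\cup(\Omc\times\Om)\cup(\Omc\times\Omc)$, one computes
\begin{align*}
\int_\Om v(x)(-\Delta)^su(x)\,dx = C_{n,s}\int_\Om\int_{\RR^n}v(x)\,(u(x)-u(y))K(x,y)\,dy\,dx,
\end{align*}
split the inner integral into $y\in\Om$ and $y\in\Omc$. On $\Om\times\Om$, symmetrize in $(x,y)$ to turn $v(x)(u(x)-u(y))$ into $\tfrac12(v(x)-v(y))(u(x)-u(y))$, producing the $\tfrac{C_{n,s}}2\iint_{\Om\times\Om}$ term. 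On $\Om\times\Omc$ one keeps $C_{n,s}\int_\Om\int_{\Omc}v(x)(u(x)-u(y))K\,dy\,dx$; adding and subtracting, and recognizing that the missing piece $C_{n,s}\int_{\Omc}\int_\Om v(y)(u(y)-u(x))K\,dx\,dy = -\int_{\Omc}v(y)\mathcal N^su(y)\,dy$, one assembles the full $\tfrac{C_{n,s}}2\iint_{\RR^{2n}\setminus\Omc\times\Omc}(u(x)-u(y))(v(x)-v(y))K$ (which is $\mathcal E(u,v)$) minus $\int_{\Omc}v\,\mathcal N^su\,dx$. The bookkeeping must be done so that every integral that appears is absolutely convergent: this is where the hypotheses $u,v\in W^{s,2}_\Omega$, $(-\Delta)^su\in L^2(\Om)$, $\mathcal N^su\in L^2(\Omc)$, $v\in L^2(\Omc)$ are used — the product $v\,(-\Delta)^su$ is in $L^1(\Om)$ by Cauchy–Schwarz, the double integral defining $\mathcal E(u,v)$ converges by the $W^{s,2}_\Omega$ membership and Cauchy–Schwarz on the measure $K\,dx\,dy$ restricted to $\RR^{2n}\setminus(\Omc)^2$, and $\int_{\Omc}v\,\mathcal N^su$ converges by Cauchy–Schwarz in $L^2(\Omc)$. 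A density argument (Lemma \ref{lem-Girs} and density of smooth functions, reducing first to $u,v\in C_0^2$ where the P.V.\ manipulations are unambiguous, then passing to the limit in each term) makes the symmetrization rigorous.

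For part (c), the limit $s\uparrow 1^-$, I would fix $u,v\in C_0^2(\RR^n)$ and rewrite $\int_{\Omc}v\,\mathcal N^su\,dx = C_{n,s}\int_{\Omc}\int_\Om v(x)\frac{u(x)-u(y)}{|x-y|^{n+2s}}\,dy\,dx$. The standard heuristic is that as $s\to1$ the kernel $C_{n,s}|x-y|^{-(n+2s)}$ concentrates on the diagonal, so the only contribution comes from $x$ near $\pOm$; one expects this to converge to a boundary integral of the normal derivative $\partial_\nu u$. I would make this precise either by (i) invoking the already-established divergence theorem in part (a) together with the known limit $\lim_{s\uparrow1}\int_\Om(-\Delta)^su\,dx$ — but that only gives the total mass, not the local boundary integral — so more likely by (ii) a direct computation: using the Lipschitz boundary to introduce local coordinates, freezing the $C^1$-regular boundary locally as a hyperplane and estimating the error via the $C^2$ bounds on $u$, and computing the model integral $C_{n,s}\int_{\{t<0\}}\int_{\{\tau>0\}}\frac{\partial_\nu u\,\cdot(\tau-t)}{(|{\cdot}|^2+(\tau-t)^2)^{(n+2s)/2}}$ which reduces, after integrating in the tangential variables and in $\tau,t$, to $\tfrac12\int_{\pOm}v\,\partial_\nu u\,d\sigma$ in the limit (the constant $C_{n,s}$ is calibrated precisely so that the normalization works out, using $C_{n,s}\sim (1-s)$ as $s\to1$ and the Gamma-function identity in \eqref{CN}). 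The main obstacle is this part (c): keeping track of the boundary geometry and the precise constant while showing the off-diagonal contributions vanish uniformly; I would reference the analogous computations in \cite{NPV} (the Bourgain–Brezis–Mironescu type limit) and \cite{SDipierro_XRosOton_EValdinoci_2017a} rather than reproduce all the estimates, since the structure is by now standard, and concentrate the written proof on parts (a) and (b), which are the ones actually used in the sequel.
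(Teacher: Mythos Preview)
Your proposal is correct and follows exactly the standard approach taken in the references the paper cites: the paper's own ``proof'' of this proposition consists entirely of pointers to \cite{SDipierro_XRosOton_EValdinoci_2017a} for parts (a) and (c) (their Lemma~3.2 and Proposition~5.1) and to \cite{SDipierro_XRosOton_EValdinoci_2017a,warma2018approximate} for part (b) (Lemma~3.3 for smooth functions, then a density argument as in \cite[Proposition~3.7]{warma2018approximate}). Your sketch of the Fubini/antisymmetrization argument for (a), the symmetrization-plus-density argument for (b), and the BBM-type concentration argument for (c) are precisely how those cited proofs proceed, so you have effectively reconstructed what the paper outsources.
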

 
  \begin{proof}
 The proofs of (a) and (c) are contained in \cite[Lemma 3.2]{SDipierro_XRosOton_EValdinoci_2017a} and \cite[Proposition 5.1]{SDipierro_XRosOton_EValdinoci_2017a}, respectively. The proof of (b) for smooth functions can be found in \cite[Lemma 3.3]{SDipierro_XRosOton_EValdinoci_2017a}. The version given here is obtained by using a density argument (see e.g. \cite[Proposition~3.7]{warma2018approximate}).
 \end{proof}
 
 \begin{remark} 
  \rm{
  Comparing the properties (a)-(c) in Proposition~\ref{prop:prop} with the properties of the Laplacian $\Delta$, we can immediately deduce that $\mathcal{N}^s$ plays the same role for $(-\Delta)^s$ as the classical normal derivative $\partial_\nu$ plays for $-\Delta$  and \(\mathcal{E}\) takes the role of the classical Dirichlet integral 
  \begin{align*}
      (u,v) \mapsto \int_\Om \nabla u \cdot\nabla v\;dx.
  \end{align*} 
  For this reason we call $\mathcal{N}^s$ the {\bf nonlocal normal derivative}. The name {\bf interaction operator} has also been used for $\mathcal N^s$ in \cite{AKW-IP,DGLZ}.
  }
 \end{remark}

\subsection{Dirichlet forms and domination of semigroups}

Let $X$ be a locally compact separable metric space. Let $\gotm$ be a Radon measure on $X$ and assume that $\operatorname{supp}[\gotm]=X$. 

We recall the following notion of energy forms, cf. \cite[Chapter 1]{Fuk} (see also \cite[Chapter 1]{Dav}).

\begin{defn}
\label{Diri-form}The form $(\gota,D(\gota))$ is said to be a Dirichlet form
if the following conditions hold:

\begin{enumerate}
\item $\gota:D(\gota)\times D(\gota)\rightarrow \mathbb{R}$ where
 $D(\gota)$  is a dense
linear subspace of $L^{2}(X):=L^2(X,\gotm)$.

\item $\gota\left( \cdot,\cdot\right)$ is a symmetric and
non-negative bilinear form. 

\item Let $\lambda >0$ and define $\gota_{\lambda }\left( u,v\right)
:=\gota\left( u,v\right) +\mathcal{\lambda }\left( u,v\right)_{L^2(X)} $
for $u,v\in D(\gota)$. The form $\gota$ is said to be closed, if $(u_{k})_{k\in\NN}\subset D(\gota)$ with
\begin{equation*}
\gota_{\lambda }\left( u_{k}-u_{m},u_{k}-u_{m}\right) \rightarrow 0%
\text{ as }k,m\rightarrow \infty ,
\end{equation*}%
then there exists $u\in D(\gota)$ such that
\begin{equation*}
\gota_{\lambda }\left( u_{k}-u,u_{k}-u\right) \rightarrow 0\text{ as
}k\rightarrow \infty .
\end{equation*}

\item $u\in D(\gota)$ implies $u\wedge 1\in D(\gota)$ and
\begin{align*}
\gota(u\wedge 1,(u-1)^+)\ge 0
\end{align*}
where $u\wedge v:=\min(u,v)$ and $v^+:=\max(v,0)$ for $u,v\in L^2(X)$.
\end{enumerate}
\end{defn}

Definition \ref{Diri-form}(d), taken from \cite[Proposition 6.7]{AR-TOM}, is equivalent to \cite[Formula (1.1.6)]{Fuk}.

There is a one-to-one correspondence between the
family of closed, symmetric, densely defined bilinear forms $(\gota, D(\gota))$ on $L^{2}\left( X\right)$
and the family of \emph{non-negative} (definite) \emph{selfadjoint}
operators $A$ on $L^{2}\left( X\right) $ defined by
\begin{equation}\label{op-A}
\begin{cases}
D(A):=\Big\{u\in D(\gota):\;\exists\;f\in L^2(X)\mbox{ such that } \gota(u,v)=(f,v)_{L^2(X)}\;\forall\;v\in D(\gota)\Big\},\\
Au:=f.
\end{cases}
\end{equation}
In that case the operator $-A$ generates a strongly continuous semigroup $(e^{-tA})_{t\ge 0}$ on $L^2(X)$.

Throughout the following if $1\le p\le\infty$, then we shall let $L^p(X):=L^p(X,\gotm)$.

\begin{remark}\label{rem1}
{\em 
Assume that the form $(\gota,D(\gota))$  satisfies the conditions (a)-(c) in Definition \ref{Diri-form}.  Then  Definition \ref{Diri-form}(d) can be replaced by the following two conditions:

\begin{enumerate}
\item[(i)] $u\in D(\gota)$ implies $|u|\in D(\gota)$ and $\gota(|u|,|u|)\le \gota(u,u)$. In that case $(e^{-tA})_{t\ge 0}$ is said to be {\bf positivity-preserving} in the sense that $u\in L^2(X)$ and $u\ge 0$ implies $e^{-tA}u\ge 0$.

\item[(ii)]  $0\le u\in D(\gota)$ implies $u\wedge 1\in D(\gota)$ and $\gota(u\wedge 1,u\wedge 1)\le\gota(u,u)$. In that case  $(e^{-tA})_{t\ge 0}$ is said to be {\bf $L^\infty$-contractive} in the sense that for every $t\ge 0$ and $u\in L^2(X)\cap L^\infty(X)$,
\begin{align*}
\|e^{-tA}u\|_{L^\infty(X)}\le \|u\|_{L^\infty(X)}.
\end{align*}
\end{enumerate}
A positivity-preserving and $L^\infty$-contractive semigroup is called {\bf submarkovian}.
}
\end{remark}

Any selfadjoint operator $A$ that is in one-to-one correspondence
with a Dirichlet form $\left( \gota,D(\gota)\right)$ turns
out to possess a number of good properties provided a certain Sobolev
embedding theorem holds for $D(\gota)$ (see e.g. \cite{Dav,Fuk,Ouh-book}).

\begin{thm}\label{thm-main}
Let $A$ be the selfadjoint operator on $L^2(X)$ associated with a Dirichlet space $( \gota,D(\gota))$ in the sense of  \eqref{op-A} and $(e^{-tA})_{t\ge 0}$ the submarkovian semigroup on $L^2(X)$ generated by $-A$. 
Then the following assertions hold.

\begin{enumerate}
\item The semigroup $(e^{-tA})_{t\geq
0}$  can be extended to a
contraction semigroup on $L^{p}\left( X\right) $ for every $p\in \lbrack
1,\infty ]$. Each semigroup is strongly continuous if $p\in \lbrack
1,\infty )$ and bounded analytic if $p\in (1,\infty )$.

\item If in addition the continuous embedding
\begin{equation}
D(\gota)\hookrightarrow L^{2q_{\gota}}\left( X\right) \text{ for some }q_{\gota}>1,
\label{Sobolev}
\end{equation}%
holds, then the semigroup $(e^{-tA})_{t\geq 0}$ is ultracontractive in the sense that it maps $L^1(X)$ into $L^\infty(X)$. More precisely, there is a constant $C>0$ such that for every $f\in L^1(X)\cap L^2(X)$ we have 
\begin{align}\label{ultra}
\|e^{-tA}f\|_{L^\infty(X)}\le Ct^{-\frac{q_\gota}{q_\gota-1}}\|f\|_{L^1(X)}\;\mbox{ for all }\; 0<t\le 1.
\end{align}
\end{enumerate}
\end{thm}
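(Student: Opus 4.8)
The plan is to prove Theorem \ref{thm-main} by assembling several classical pieces from the theory of Dirichlet forms and ultracontractive semigroups, rather than reproving them from scratch. For part (a), I would first invoke the Beurling--Deny criteria: since $(\gota,D(\gota))$ is a Dirichlet form, the associated semigroup $(e^{-tA})_{t\ge 0}$ is submarkovian (positivity-preserving and $L^\infty$-contractive, as recalled in Remark \ref{rem1}). Positivity-preservation together with $L^\infty$-contractivity gives, by the symmetry of $A$ and duality, that $(e^{-tA})_{t\ge 0}$ is also $L^1$-contractive; then the Riesz--Thorin interpolation theorem yields a contraction on every $L^p(X)$, $1\le p\le\infty$. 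Strong continuity on $L^p(X)$ for $p\in[1,\infty)$ follows from strong continuity on $L^2(X)$ together with the density of $L^2(X)\cap L^p(X)$ in $L^p(X)$ and the uniform contractivity bound (an $\varepsilon/3$ argument). For the bounded analyticity on $L^p(X)$ with $p\in(1,\infty)$, I would cite the standard result that a symmetric submarkovian semigroup on $L^2$ extends to a bounded analytic semigroup on each such $L^p$ (this is in \cite{Dav,Ouh-book}); the self-adjointness of $A$ gives analyticity on $L^2$ with angle $\pi/2$, and the contraction property on the $L^p$ scale combined with Stein interpolation transfers analyticity to $L^p$.

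For part (b), the strategy is to convert the Sobolev embedding $D(\gota)\hookrightarrow L^{2q_\gota}(X)$ into a Nash-type inequality and then run the Nash--Moser iteration. Concretely, writing $\|u\|_{D(\gota)}^2 = \gota(u,u) + \|u\|_{L^2(X)}^2$, the embedding gives $\|u\|_{L^{2q_\gota}(X)}^2 \le C\bigl(\gota(u,u) + \|u\|_{L^2(X)}^2\bigr)$ for all $u\in D(\gota)$. Interpolating the $L^{2q_\gota}$ norm between $L^2$ and $L^1$ via Hölder's inequality yields an inequality of the form
\begin{align*}
\|u\|_{L^2(X)}^{2+\frac{2}{\theta}} \le C\bigl(\gota(u,u) + \|u\|_{L^2(X)}^2\bigr)\,\|u\|_{L^1(X)}^{\frac{2}{\theta}}
\end{align*}
for the appropriate exponent $\theta$ depending on $q_\gota$. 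This is precisely the hypothesis needed for the classical equivalence (due to Nash, and sharpened by Carlen--Kusuoka--Stroock and Coulhon) between such a functional inequality and the on-diagonal heat kernel bound $\|e^{-tA}\|_{L^1\to L^2} \le C t^{-q_\gota/(2(q_\gota-1))}$ for $0<t\le 1$; the restriction to small $t$ comes from the zeroth-order term $\|u\|_{L^2(X)}^2$ on the right. By duality (self-adjointness of $e^{-tA}$) one gets the same bound for $\|e^{-tA}\|_{L^2\to L^\infty}$, and composing the two via the semigroup property, $e^{-tA} = e^{-(t/2)A}e^{-(t/2)A}$, produces $\|e^{-tA}\|_{L^1\to L^\infty} \le C t^{-q_\gota/(q_\gota-1)}$ for $0<t\le 1$, which is exactly \eqref{ultra}. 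The derivation of the differential inequality $\frac{d}{dt}\|e^{-tA}f\|_{L^2(X)}^2 = -2\gota(e^{-tA}f,e^{-tA}f)$ and its integration is the technical core here.

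The main obstacle I anticipate is not any single hard estimate but rather the bookkeeping in the Nash iteration when the zeroth-order term $\|u\|_{L^2(X)}^2$ is present, which is why the conclusion is only local in time ($0<t\le 1$) rather than global; one must either absorb this term at the cost of a multiplicative constant or pass to the shifted semigroup $e^{-t(A+1)}$, prove the clean bound there, and then undo the shift on the interval $(0,1]$ where $e^{t}$ is bounded. A secondary point of care is justifying that $e^{-tA}f \in D(\gota)$ for $t>0$ when $f\in L^1(X)\cap L^2(X)$, so that $\gota(e^{-tA}f,\cdot)$ is meaningful in the differential inequality — this follows from the analyticity of the semigroup on $L^2(X)$, which places $e^{-tA}f$ in $D(A)\subset D(\gota)$ for all $t>0$. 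Given that all of these ingredients are standard and available in the cited references \cite{Dav,Fuk,Ouh-book}, the proof in the paper is likely to be short, essentially a pointer to those sources with the observation that \eqref{Sobolev} supplies exactly the Nash inequality.
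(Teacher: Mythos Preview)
Your proposal is a correct and well-organized sketch of the standard route to these results. However, the paper does not actually prove Theorem \ref{thm-main}: it is stated in Section \ref{sec-prel} as a recollection of known facts, with the surrounding text pointing to \cite{Dav,Fuk,Ouh-book} for the proofs. So there is no ``paper's own proof'' to compare against; the authors simply cite the result.

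That said, your outline matches exactly what one finds in those references: Beurling--Deny for the submarkovian property, duality plus Riesz--Thorin for the $L^p$ contraction scale, Stein interpolation for bounded analyticity on $L^p$ with $1<p<\infty$, and the Nash-inequality/Varopoulos--Coulhon route from the Sobolev embedding \eqref{Sobolev} to the ultracontractivity bound \eqref{ultra}. Your remarks about the zeroth-order term forcing the restriction $0<t\le 1$ (or equivalently working with the shifted operator $A+1$) are also on point and anticipate precisely the content of Remark \ref{rem-210}. In short, you have reconstructed the argument the paper is implicitly invoking.
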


\begin{remark}\label{rem-210}
{\em Let $(\gota,D(\gota))$, $A$ and $(e^{-tA})_{t\ge 0}$ be as in Theorem \ref{thm-main}.
Assume that $\gotm(X)<\infty$ and $D(\gota)\overset{c}{\hookrightarrow }L^{2}\left( X\right)$ (compact embedding). Then the operator $A$ has a compact resolvent. Hence, it has a discrete
spectrum which  is a non-decreasing sequence of real numbers, $
0\leq \lambda _{1}\leq \lambda _{2}\leq \cdots \leq \lambda _{n}\leq \cdots,$ satisfying $\lim_{n\to\infty}\lambda_n=+\infty$. In addition each semigroup on $L^{p}\left( X\right) $ is compact for every $p\in \lbrack 1,\infty ]$. If the embedding \eqref{Sobolev} also holds, then we have the following.
\begin{enumerate}
\item If $\lambda_1>0$, then the estimate \eqref{ultra} holds for every $t>0$.
\item If $\lambda_1=0$, then \eqref{ultra} can be replaced by
\begin{align*}
\|e^{-tA}f\|_{L^\infty(X)}\le Ce^tt^{-\frac{q_\gota}{q_\gota-1}}\|f\|_{L^1(X)}\;\mbox{ for all }\; t>0.
\end{align*}
\end{enumerate}}
\end{remark}

Next, let $S$ and $T$ be two semigroups on $L^2(X)$ and assume that $T$ is positivity-preserving. We shall say that $S$ is dominated by $T$, and we write $S\le T$, if
\begin{align}\label{dom}
|S(t)f|\le T(t)|f|\;\mbox{ for all }\; t\ge 0 \mbox{ and } f\in L^2(X).
\end{align}

The following domination criterion of semigroups has been obtained in \cite{Ouh-Dom}.

\begin{thm}\label{thm-29}
Let $S$ and $T$ be two symmetric semigroups on $L^2(X)$. Let $(\gota,D(\gota))$ and $(\gotb,D(\gotb))$ be the bilinear, symmetric and closed forms associated with $S$ and $T$, respectively. Assume that both semigroups are positivity-preserving. Then the following assertions are equivalent.
\begin{enumerate}
\item[(i)] The semigroup $S$ is dominated by the semigroup $T$ in the sense of \eqref{dom}.

\item[(ii)]\begin{itemize}
\item  $D(\gota)\subset D(\gotb)$ and if $0\le v\le u$ with $u\in D(\gota)$ and $v\in D(\gotb)$, then $v\in D(\gota)$. That is, $D(\gota)$ is an ideal in $D(\gotb)$.
\item For all $0\le u,v\in D(\gota)$ we have $\gotb(u,v)\le \gota(u,v)$.
\end{itemize}
\end{enumerate}
\end{thm}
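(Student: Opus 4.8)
The plan is to recast the domination \eqref{dom} as the invariance of a single closed convex set under a single semigroup and then to apply the form-theoretic invariance criterion for closed convex sets that underlies Definition~\ref{Diri-form}(d), in its general form for an arbitrary closed convex set (see \cite{Ouh-book,Ouh-Dom}). I would work on the product space $\mathcal H:=L^2(X)\oplus L^2(X)$ with the symmetric product semigroup $\mathcal S(t):=S(t)\oplus T(t)$, which is generated by the operator associated with the closed, symmetric, non-negative form $\gothic c$ on $\mathcal H$ given by $D(\gothic c):=D(\gota)\times D(\gotb)$ and $\gothic c\big((u_1,u_2),(v_1,v_2)\big):=\gota(u_1,v_1)+\gotb(u_2,v_2)$. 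Set
\[
C:=\big\{(f,g)\in\mathcal H:\ |f|\le g\ \gotm\text{-a.e.}\big\},
\]
which is a non-empty closed convex cone. The first step is the elementary equivalence: $S\le T$ in the sense of \eqref{dom} if and only if $\mathcal S(t)C\subseteq C$ for all $t\ge0$. Indeed, if $S\le T$ and $(f,g)\in C$, then $|S(t)f|\le T(t)|f|\le T(t)g$ by domination and the positivity of $T$, so $\mathcal S(t)(f,g)\in C$; conversely, applying the invariance to $(f,|f|)\in C$ gives $|S(t)f|\le T(t)|f|$ for every $f\in L^2(X)$.

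The second step is to invoke the invariance criterion: writing $P$ for the metric projection of $\mathcal H$ onto $C$, one has $\mathcal S(t)C\subseteq C$ for all $t\ge0$ if and only if (1) $P\,D(\gothic c)\subseteq D(\gothic c)$ and (2) $\gothic c\big(Pw,\,w-Pw\big)\ge0$ for every $w\in D(\gothic c)$; the remaining task is to identify (1)--(2) with condition (ii). Because the constraint defining $C$ is pointwise, $P$ is computed $\gotm$-a.e.: it sends $(u_1(x),u_2(x))$ to its metric projection onto the planar cone $\{(a,b):b\ge|a|\}\subset\RR^2$, giving $Pw=(v_1,v_2)$ with $v_2=\tfrac12\big(u_2+(u_2\vee|u_1|)\big)^+$ and $v_1$ equal to the truncation of $u_1$ into the band $[-v_2,v_2]$, so that $w-Pw$ is carried by $\{u_2<|u_1|\}$ and $v_1=v_2=0$ on $\{u_2<-|u_1|\}$. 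Since $S$ and $T$ are positivity-preserving (so that $|u_1|\in D(\gota)$, $|u_2|\in D(\gotb)$ and $h^+\in D(\gotb)$ whenever $h\in D(\gotb)$; cf.\ Remark~\ref{rem1}), condition (1) is then equivalent to the two domain requirements of (ii): the inclusion $D(\gota)\subseteq D(\gotb)$ (which makes $v_2\in D(\gotb)$ automatic) together with the ideal property, which via $|v_1|\le|u_1|$ is exactly what makes $v_1\in D(\gota)$ and conversely is forced by it (test (1) on $w=(u,v)$ with $0\le v\le u\in D(\gota)$, $v\in D(\gotb)$, for which $(Pw)_1=\tfrac12(u+v)$). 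Expanding $\gothic c(Pw,w-Pw)=\gota(v_1,u_1-v_1)+\gotb(v_2,u_2-v_2)$ and rearranging it by means of the abstract first Beurling--Deny inequalities for $\gota$ and $\gotb$, condition (2) reduces to $\gotb(u,v)\le\gota(u,v)$ for all $0\le u,v\in D(\gota)$; its necessity is already visible on $w=(u+v,u-v)$, for which $Pw=(u,u)$ and hence $\gothic c(Pw,w-Pw)=\gota(u,v)-\gotb(u,v)\ge0$.

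For the inclusion $D(\gota)\subseteq D(\gotb)$ in the direction (i) $\Rightarrow$ (ii), needed before the ideal statement is even meaningful, I would argue directly: using the monotone approximation $\gota(u,u)=\sup_{t>0}\tfrac1t\big(u-S(t)u,u\big)_{L^2(X)}=\lim_{t\downarrow0}\tfrac1t\big(u-S(t)u,u\big)_{L^2(X)}$ (finite precisely for $u\in D(\gota)$) and the analogue for $\gotb$, the inequality $0\le S(t)u\le T(t)u$ valid for $0\le u$ (a consequence of \eqref{dom} and the positivity of $S$) gives $\tfrac1t(u-S(t)u,u)\ge\tfrac1t(u-T(t)u,u)$, hence $\gotb(u,u)\le\gota(u,u)$ for $0\le u\in D(\gota)$; writing a general $u\in D(\gota)$ as $u^+-u^-$ with $u^\pm\in D(\gota)$ then yields $u\in D(\gotb)$.

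I expect the main obstacle to lie in the \emph{sufficiency} direction (ii) $\Rightarrow$ (i): one must pin down the projection $P$ through the case analysis for the cone $\{b\ge|a|\}\subset\RR^2$ without mishandling signs, and then transform $\gothic c(Pw,w-Pw)$, for an arbitrary $w\in D(\gothic c)$, into a combination of the inequality $\gotb\le\gota$ on the positive cone and the abstract first Beurling--Deny inequalities for $\gota$ and $\gotb$ — a lattice-theoretic manipulation that, because the two forms are nonlocal, cannot be carried out by decomposing $X$ into regions. All the other ingredients (the form/semigroup correspondence on the product space, the monotone time-approximation of a form, and the abstract invariance criterion for closed convex subsets of a Hilbert space) are standard; see \cite{Fuk,Dav,Ouh-book,Ouh-Dom}.
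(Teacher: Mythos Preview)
The paper does not prove Theorem~\ref{thm-29}; it is stated as a known result and attributed to \cite{Ouh-Dom} (see also \cite[Chapter~2]{Ouh-book}). There is therefore no ``paper's own proof'' to compare against. Your proposal is precisely the strategy of Ouhabaz's original argument: reformulate \eqref{dom} as invariance of the closed convex cone $C=\{(f,g):|f|\le g\}$ under the product semigroup $S\oplus T$ on $L^2(X)\oplus L^2(X)$, and then apply the projection criterion for invariance of closed convex sets. Your computation of the pointwise projection onto the planar cone $\{b\ge|a|\}$ is correct, and the test vectors you select for the necessity direction (in particular $w=(u,v)$ with $0\le v\le u$ to force the ideal property, and $w=(u+v,u-v)$ to obtain $\gota(u,v)-\gotb(u,v)\ge0$) are exactly the right ones.

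You are also right that the only genuinely delicate part is the sufficiency direction (ii) $\Rightarrow$ (i): showing that the ideal property together with $\gotb\le\gota$ on the positive cone implies $\gothic c(Pw,w-Pw)\ge0$ for \emph{every} $w\in D(\gothic c)$ requires a careful lattice decomposition (splitting according to the three regions $\{u_2\ge|u_1|\}$, $\{-|u_1|\le u_2<|u_1|\}$, $\{u_2<-|u_1|\}$) and repeated use of the abstract first Beurling--Deny inequality for both forms. This is carried out in full in \cite{Ouh-Dom} and \cite[Theorem~2.24]{Ouh-book}; your sketch correctly identifies the ingredients without reproducing them.
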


For more information on domination criteria of semigroups we refer to \cite[Chapter 2]{Ouh-book}.

\section{The three exterior conditions for the fractional Laplacian}\label{sec-3}

In this section we introduce the realization in $L^2(\Om)$ of the fractional Laplace operator with the Dirichlet, the nonlocal Neumann and the nonlocal Robin exterior conditions. We will also give several qualitative properties of these operators.  

Throughout the remainder of the paper, for functions $u,v\in W_\Omega^{s,2}$ we shall let
\begin{align}\label{form-EN}
    \mathcal{E}(u,v):=& \frac{C_{n,s}}{2}\int_\Omega \int_\Omega \frac{(u(x)-u(y))(v(x)-v(y))}{\vert x-y\vert^{n+2s} } dy \;dx\notag\\ 
       & +C_{n,s}\int_\Omega \int_{\Omc} \frac{(u(x)-u(y))(v(x)-v(y))}{\vert x-y\vert^{n+2s} } dy \;dx
\end{align}
where $C_{n,s}$ is the constant given in \eqref{CN}.

\begin{remark}
{\em
We want to study the fractional Laplace operator by using  form methods. As in the case of the classical Laplace operator, form methods yield an existence theory for weak solutions. In the case of $(-\Delta)^s$, it is a priori not clear if weak solutions are in fact strong solutions. That is, it is not obvious if $(-\Delta)^su$ is well defined almost everywhere, lies in an appropriate function space, and the equation and exterior conditions are satisfied almost everywhere. }

{\em
In the classical case $s=1$ this is well known. More precisely, weak solutions belong to \(W^{2,2}_{\text{loc}}(\Omega)\) and the associated equation holds pointwise almost everywhere in $\Omega$. To the best of our knowledge similar results for the fractional Laplacian on bounded domains as defined in this paper,  are currently unknown. We shall come back on these issues after introducing our notion of weak solutions for each exterior condition.}

{\em 
Nevertheless, as in the classical case, strong solutions are always weak solutions for every exterior condition we investigated in this article. This follows directly from the integration by parts formula \eqref{Int-Part}  for the Neumann and Robin exterior conditions, and to \eqref{IN-Dir} below for the Dirichlet exterior condition. Hence, the operators given in this section are always selfadjoint realizations of the fractional Laplace operator.}
\end{remark}

\subsection{The Dirichlet exterior condition} \label{sub-dir}

Throughout this subsection $\Omega\subset\RR^n$ is an arbitrary bounded open set. Let $u\in W_0^{s,2}(\bOm)$ be such that $(-\Delta)^su\in L^2(\Omega)$. Then the following integration by parts formula is well-known (see e.g. \cite{NPV}). For every $v\in W_0^{s,2}(\bOm)$ we have 
\begin{align}\label{IN-Dir}
\int_{\Om}v(-\Delta)^su\;dx= \frac{C_{n,s}}{2}\int_{\RR^n} \int_{\RR^n} \frac{(u(x)-u(y))(v(x)-v(y))}{\vert x-y\vert^{n+2s} } dy \;dx=\mathcal E(u,v).
\end{align}

Several authors (see e.g. \cite{Grubb3, Grub,RS-DP2,RS-DP,warma2018approximate}) have studied the Dirichlet problem for $(-\Delta)^s$, that is, the elliptic problem
\begin{equation}\label{DiPr}
(-\Delta)^su=f\;\;\mbox{ in }\;\Omega,\;\;\; u=0\;\mbox{ in }\;\Omc,
\end{equation}
and the associated parabolic problem, but not in the same spirit as in the present paper. Even if this case is straightforward, for the sake of completeness and since we would like to make a comparison with the Neumann and Robin cases, we have decided to include it here.

Let $f\in L^2(\Omega)$. We shall say that a function $u\in W_0^{s,2}(\bOm)$ is a weak solution of \eqref{DiPr}, if 
\begin{align}\label{DiPrWe}
\mathcal E(u,v)=\frac{C_{n,s}}{2}\int_{\RR^n} \int_{\RR^n} \frac{(u(x)-u(y))(v(x)-v(y))}{\vert x-y\vert^{n+2s} } dy \;dx=\int_{\Omega}fv\;dx,
\end{align}
for every $v\in W_0^{s,2}(\bOm)$.

Using the classical Lax-Milgram lemma, it is straightforward to show the existence and uniqueness of weak solutions to the Dirichlet problem \eqref{DiPr}.

\begin{remark}\label{rem-Diri}
{\em We make the following observation.
Let $f\in L^2(\Omega)$ and $u\in W_0^{s,2}(\bOm)$ a weak solution of \eqref{DiPr}. 
We do not know if $u$ is a strong solution of \eqref{DiPr} in the sense that $(-\Delta)^su$ (as given in \eqref{eq11}) is well defined almost everywhere, $(-\Delta)^su\in L^2(\Omega)$ and $(-\Delta)^su=f$ a.e. in $\Omega$.
However, we have the following inner regularity properties of solutions to \eqref{DiPr}. 
By \cite[Theorem 1.3]{BWZ1} weak solutions of \eqref{DiPr} belong to $W_{\rm loc}^{2s,2}(\Omega)$. This maximal inner regularity result can be a starting point to investigate if weak and strong solutions coincide.
}
\end{remark}

For a function \(u \in L^2(\Omega)\) we define its extension $u_D$ as follows:
\begin{align}\label{uD}
    u_D(x):=\begin{cases} u(x) &\text{ if } x\in \Omega, \\
    0 &\text{ if } x\in\RR^n\setminus\Omega.
    \end{cases}
\end{align}

The following result characterizes the realization in $L^2(\Om)$ of  $(-\Delta)^s$ with the zero Dirichlet exterior condition via the method of bilinear forms.

\begin{thm}\label{thm-31}
Let 
\begin{align}\label{dom-DC}
    D(\gota_D):=\Big\{u \in L^2(\Omega): u_D\in W_\Om^{s,2}\Big\}=\widetilde W_0^{s,2}(\Om),
\end{align}
and \(\gota_D:D(\gota_D) \times D(\gota_D) \rightarrow \mathbb{R}\) the form given by
\begin{align*}
    \gota_D(u,v):=\mathcal{E}(u_D,v_D).
\end{align*}
Then $\gota_D$ is a densely defined, symmetric and closed bilinear form in $L^2(\Omega)$. The selfadjoint operator $A_D$ on $L^2(\Om)$ associated with \(\gota_D\)  in the sense of \eqref{op-A} is given by  
\begin{equation}\label{Op-Dir}
\begin{cases}
 D(A_D):=\Big\{u \in \widetilde W_0^{s,2}(\Om):\;\exists\; f\in L^2(\Om)\mbox{ such that } u_D \text{ is a weak solution of }  \eqref{DiPr}\\
 \hfill \text{ with right hand side } f \Big\},\\
 A_Du:=f.
 \end{cases}
\end{equation}
\end{thm}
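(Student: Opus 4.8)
The plan is to verify the hypotheses of the abstract form–operator correspondence \eqref{op-A} for $\gota_D$, and then to identify the associated operator $A_D$ with the realization of the fractional Laplacian under the Dirichlet exterior condition. First I would dispose of the easy structural properties. The identity $D(\gota_D)=\widetilde W_0^{s,2}(\Om)$ in \eqref{dom-DC} is essentially a matter of unraveling definitions: if $u\in L^2(\Om)$ then $u_D$ vanishes on $\Omc$, so the mixed and exterior integrals in $\mathcal E(u_D,\cdot)$ collapse (as already recorded in the excerpt, if $u=0$ on $\Omc$ then $\mathcal E(u_D,v_D)=\tfrac{C_{n,s}}{2}\int_{\RR^n}\int_{\RR^n}\tfrac{(u_D(x)-u_D(y))(v_D(x)-v_D(y))}{|x-y|^{n+2s}}\,dx\,dy$), and the condition $u_D\in W_\Om^{s,2}$ reduces exactly to $u_D\in W^{s,2}(\RR^n)$, i.e. $u\in\widetilde W_0^{s,2}(\Om)$. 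Density of $D(\gota_D)$ in $L^2(\Om)$ follows from Lemma \ref{lem-Girs}(a), since $\mathcal D(\Om)\subset\widetilde W_0^{s,2}(\Om)$ is dense in $L^2(\Om)$. Symmetry and bilinearity of $\gota_D$ are immediate from the symmetry and bilinearity of $\mathcal E$ and the linearity of $u\mapsto u_D$. Nonnegativity is clear since $\mathcal E(u_D,u_D)\ge 0$.

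Next I would prove closedness. The natural graph norm is $\big(\|u\|_{L^2(\Om)}^2+\gota_D(u,u)\big)^{1/2}$, which under the identification $u\leftrightarrow u_D$ is comparable to the $W_\Om^{s,2}$-norm restricted to functions supported in $\bOm$, hence to the $W^{s,2}(\RR^n)$-norm on $\widetilde W_0^{s,2}(\Om)=W_0^{s,2}(\bOm)$. Since $W_0^{s,2}(\bOm)$ is a closed subspace of the Hilbert space $W^{s,2}(\RR^n)$ (it is the kernel of the restriction map to $\Omc$, which is continuous), it is complete; therefore a $\gota_D$-Cauchy sequence in $D(\gota_D)$ converges in the graph norm to a limit in $D(\gota_D)$, which is exactly condition (c) of Definition \ref{Diri-form}. (One could alternatively invoke Lemma \ref{lem22} with $\beta=0$ to get completeness of $W_\Om^{s,2}$ and then note that the extension map embeds $D(\gota_D)$ isometrically onto a closed subspace.)

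Finally I would identify the operator. By \eqref{op-A}, $u\in D(A_D)$ with $A_Du=f$ means $u\in\widetilde W_0^{s,2}(\Om)$ and $\gota_D(u,v)=(f,v)_{L^2(\Om)}$ for all $v\in D(\gota_D)$. Writing this out, $\gota_D(u,v)=\mathcal E(u_D,v_D)=\tfrac{C_{n,s}}{2}\int_{\RR^n}\int_{\RR^n}\tfrac{(u_D(x)-u_D(y))(v_D(x)-v_D(y))}{|x-y|^{n+2s}}\,dx\,dy$, and $(f,v)_{L^2(\Om)}=\int_\Om f v_D\,dx$; comparing with the definition \eqref{DiPrWe} of weak solution of \eqref{DiPr}, this says precisely that $u_D$ is a weak solution of \eqref{DiPr} with right-hand side $f$. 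This yields \eqref{Op-Dir}. I expect the main (and really only) obstacle to be the bookkeeping in the closedness argument — namely making fully rigorous that the map $u\mapsto u_D$ is an isometric isomorphism of $(D(\gota_D),\|\cdot\|_{\gota_D}+\|\cdot\|_{L^2})$ onto a closed subspace of $W^{s,2}(\RR^n)$, for which one must check that the $L^2(\Om)$-norm of $u$ controls the $L^2(\RR^n)$-norm of $u_D$ (trivial, since $u_D$ vanishes outside $\Om$) and that the Gagliardo seminorm over $\RR^{2n}$ agrees with $\mathcal E(u_D,u_D)$ up to the constant $C_{n,s}/2$ (which is the displayed identity in the excerpt for functions supported in $\bOm$). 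Everything else is routine. Moreover, since $\Om$ is bounded, the compact embedding $\widetilde W_0^{s,2}(\Om)\hookrightarrow L^2(\Om)$ from Remark \ref{rem-sob} shows $A_D$ has compact resolvent, so Remark \ref{rem-210} applies; and the Sobolev embedding \eqref{sob-em} supplies the hypothesis \eqref{Sobolev}, so that $-A_D$ generates a submarkovian, ultracontractive semigroup — but that is the content of the accompanying corollaries rather than of Theorem \ref{thm-31} itself.
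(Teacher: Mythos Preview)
Your proposal is correct and follows essentially the same route as the paper's proof: density via Lemma~\ref{lem-Girs}(a), closedness by identifying the graph norm with the $W_0^{s,2}(\bOm)$-norm (the paper simply asserts $D(\gota_D)$ with that norm is a Hilbert space), and the operator characterization by unraveling the definition of weak solution. The only difference is that you spell out in detail what the paper dismisses as ``trivial'' (the equality $D(\gota_D)=\widetilde W_0^{s,2}(\Om)$ and the identification \eqref{Op-Dir}); your additional remarks on compact resolvent and ultracontractivity are indeed beyond the scope of this theorem.
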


\begin{proof}
Firstly, we notice that  $D(\gota_D)$ endowed with the norm $\|u\|_{D(\gota_D)}:=\|u_D\|_{W_0^{s,2}(\bOm)}$
 is a Hilbert space. Hence, the form $\gota_D$ is closed. Secondly, since $\mathcal D(\Omega)\subset\widetilde W^{s,2}_0(\Om)$ (by Lemma \ref{lem-Girs}(a)), we have that $\gota_D$ is densely defined.

Finally, by definition, \(u \in \widetilde W_0^{s,2}(\Om)\) if and only if \(u_D \in W_\Om^{s,2}\). Hence, the characterization of the operator $A_D$ given in \eqref{Op-Dir} is trivial. The proof is finished.
\end{proof}

\begin{remark}
{\em The operator $A_D$ is the realization in $L^2(\Omega)$ of $(-\Delta)^s$ with  the zero Dirichlet exterior condition. 
}
\end{remark}

Denote by $T_D=(e^{-tA_D})_{t\ge 0}$ the semigroup on $L^2(\Omega)$ generated by $-A_D$.

\begin{thm}\label{thm-33} 
The semigroup $T_D$ is positivity-preserving.
\end{thm}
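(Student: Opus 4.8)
The plan is to verify condition (i) of Remark \ref{rem1}, namely that $u \in D(\gota_D)$ implies $|u| \in D(\gota_D)$ together with the inequality $\gota_D(|u|,|u|) \le \gota_D(u,u)$; by Remark \ref{rem1} this is exactly the statement that $T_D$ is positivity-preserving. First I would unwind the definitions: for $u \in D(\gota_D) = \widetilde W_0^{s,2}(\Om)$ the extension $u_D$ is $u$ on $\Om$ and $0$ on $\Omc$, so $|u|_D = |u_D|$ pointwise on $\RR^n$. Since $u = 0$ in $\Omc$, the form $\mathcal E(u_D, u_D)$ collapses (as noted in the excerpt right after the definition of $\mathcal E$) to the full-space Gagliardo seminorm
\begin{align*}
\gota_D(u,u) = \mathcal E(u_D,u_D) = \frac{C_{n,s}}{2}\int_{\RR^n}\int_{\RR^n}\frac{|u_D(x)-u_D(y)|^2}{|x-y|^{n+2s}}\;dydx.
\end{align*}

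The key pointwise inequality is the elementary $\big||a|-|b|\big| \le |a-b|$ for real numbers $a,b$. Applying it with $a = u_D(x)$, $b = u_D(y)$ gives
\begin{align*}
\frac{\big||u_D(x)|-|u_D(y)|\big|^2}{|x-y|^{n+2s}} \le \frac{|u_D(x)-u_D(y)|^2}{|x-y|^{n+2s}}
\end{align*}
for all $x \ne y$. Integrating over $\RR^n \times \RR^n$ shows first that $|u_D|$ has finite Gagliardo seminorm, hence $|u_D| \in W^{s,2}(\RR^n)$; since $|u_D| = 0$ in $\Omc$ as well, this gives $|u_D| \in W_0^{s,2}(\bOm)$, i.e. $|u| \in \widetilde W_0^{s,2}(\Om) = D(\gota_D)$. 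The same integrated inequality immediately yields $\gota_D(|u|,|u|) = \mathcal E(|u_D|,|u_D|) \le \mathcal E(u_D,u_D) = \gota_D(u,u)$, which is precisely Remark \ref{rem1}(i).

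Having checked (i) — and recalling that $\gota_D$ is symmetric, non-negative, densely defined and closed by Theorem \ref{thm-31} — the conclusion that $T_D = (e^{-tA_D})_{t\ge 0}$ is positivity-preserving follows directly from Remark \ref{rem1}. There is essentially no obstacle here: the only point requiring a word of care is that the reduction of $\mathcal E(u_D,u_D)$ to the full $\RR^n$-seminorm is valid precisely because $u_D$ vanishes off $\Om$, so the $(\Omc \times \Omc)$ piece that was excised in the definition of $\mathcal E$ contributes nothing anyway; this is exactly the identity displayed in the introduction and it applies equally to $|u_D|$. Thus the entire argument is the pointwise estimate $\big||a|-|b|\big| \le |a-b|$ combined with the abstract criterion, and no further analysis of the nonlocal structure is needed for this particular statement.
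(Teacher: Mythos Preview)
Your proposal is correct and follows essentially the same approach as the paper: both reduce $\gota_D(u,u)$ to the full-space Gagliardo seminorm (using that $u_D$ vanishes on $\Omc$), apply the reverse triangle inequality $\big||a|-|b|\big|\le |a-b|$ pointwise, and invoke Remark~\ref{rem1}(i). Your write-up is slightly more explicit about the identification $|u|_D=|u_D|$ and about why the membership $|u|\in D(\gota_D)$ follows, but the argument is the same.
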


\begin{proof}
Notice that for $u,v\in D(\gota_D)$ we have
\begin{align*}
\gota_D(u,v)=\mathcal E(u_D,v_D)=\frac{C_{n,s}}{2}\int_{\RR^n}\int_{\RR^n}\frac{(u_D(x)-u_D(y))(v_D(x)-v_D(y))}{|x-y|^{n+2s}}\;dx\;dy.
\end{align*}
Let \(u\in D(\gota_D)\). By Remark \eqref{rem1}(i)
we have to show that \(\vert u\vert\in D(\gota_D)\) and $\gota_D(|u|,|u|)\le \gota_D(u,u)$. Indeed, using the reverse triangle inequality we get that
\begin{align*}
    \gota_D(\vert u \vert,\vert u \vert)&=\frac{C_{n,s}}{2} \int_{\mathbb{R}^n} \int_{\mathbb{R}^n} \frac{\big\vert \vert u_D \vert (x) - \vert u_D \vert (y) \big\vert^2}{\vert x-y \vert ^{n+2s} }dx\;dy \\
      & \leq \frac{C_{n,s}}{2}\int_{\mathbb{R}^n} \int_{\mathbb{R}^n} \frac{\big\vert  u_D (x) - u_D (y) \big\vert^2}{\vert x-y \vert ^{n+2s} }dx\;dy \\
     &= \gota_D(u,u).
\end{align*}
Hence,  \(\vert u\vert\in D(\gota_D)\) and $  \gota_D(\vert u \vert,\vert u \vert)\le \gota_D(u,u)$.
The proof is finished.
\end{proof}

Other qualitative properties of the semigroup $T_D$ will be given in Section \ref{sec-dom}.

\begin{remark}\label{rem-D-form}
{\em We notice the following facts.
\begin{enumerate}
\item Let
\begin{align}\label{kappa}
\kappa(x):=C_{n,s}\int_{\Omc}\frac{1}{|x-y|^{n+2s}}\;dy,\;\;x\in\Om.
\end{align}
Then  a simple calculation shows that for $u,v\in W_0^{s,2}(\bOm)$ we have
\begin{align}\label{dota-a}
\mathcal E(u,v)=\frac{C_{n,s}}{2}\int_{\Om}\int_{\Om}\frac{(u(x)-u(y))(v(x)-v(y))}{|x-y|^{n+2s}}\;dx\;dy +\int_{\Om}uv\kappa\;dx.
\end{align}
It follows from \eqref{dota-a} that for $u,v\in D(\gota_D)=\widetilde W_0^{s,2}(\Om)$ we have
\begin{align}\label{dota}
\gota_D(u,v)=\mathcal E(u_D,v_D)=\frac{C_{n,s}}{2}\int_{\Om}\int_{\Om}\frac{(u(x)-u(y))(v(x)-v(y))}{|x-y|^{n+2s}}\;dx\;dy +\int_{\Om}uv\kappa\;dx,
\end{align}
where $\kappa$ is given in \eqref{kappa}.
\item Now assume that $\Omega$ has a Lipschitz continuous boundary $\pOm$. It has been shown in \cite[Formula (1.3.2.12)]{Gris} that there are two constants $0<C_1\le C_2$ such that
\begin{align}\label{delta}
C_1\delta(x)^{-2s}\le\kappa(x)\le C_2\delta(x)^{-2s},
\end{align}
where we recall that $\delta(x)=\mbox{dist}(x,\pOm)$, $x\in\Omega$. Taking into account the characterization of $\widetilde W_0^{s,2}(\Om)$ given in \eqref{sobo2} and the estimate \eqref{delta}, we can deduce that $\int_{\Om}|u(x)|^2\kappa(x)\;dx<\infty$ for every $u\in \widetilde W_0^{s,2}(\Om)$. If $s=\frac 12$, then in view of Remark \ref{rem-obs} we cannot guarantee that  $\int_{\Om}|u(x)|^2\kappa(x)\;dx<\infty$ for every $u\in W_0^{\frac 12,2}(\Omega)$. For this reason we cannot define the form $\gota_D$ in \eqref{dota} with the space $W_0^{s,2}(\Omega)$ at least for $s=\frac 12$.
\end{enumerate}
}
\end{remark}

\subsection{The nonlocal Neumann exterior condition} 
Throughout the remainder of the paper, $\Omega\subset\RR^n$ ($n\ge 1$) is a bounded open set with a Lipschitz continuous boundary. In \cite{SDipierro_XRosOton_EValdinoci_2017a} the authors have studied the well-posedness of the following elliptic Neumann problem:
\begin{equation}\label{NePr}
(-\Delta)^su=f\;\;\mbox{ in }\;\Omega,\;\;\; \mathcal N^su=0\;\mbox{ in }\;\Omb,
\end{equation}
and the associated parabolic problem. The eigenvalues problem associated to \eqref{NePr}  has been investigated  without describing explicitly the associated operator. We emphasize that the non-described operator in \cite{SDipierro_XRosOton_EValdinoci_2017a} is the one that we shall completely characterize in the present subsection. Problem \eqref{NePr} in another spirit has been also studied  in \cite{AKW-IP}.

For $f\in L^2(\Omega)$ we shall say that a function $u\in W_\Omega^{s,2}$ is a weak solution of \eqref{NePr} if 
\begin{align}\label{SN}
\mathcal E(u,v)=\frac{C_{n,s}}{2}\int\int_{\RR^{2n}\setminus(\RR^n\setminus\Omega)^2} \frac{(u(x)-u(y))(v(x)-v(y))}{\vert x-y\vert^{n+2s} } dy \;dx=\int_{\Omega}fv\;dx
\end{align}
for every $v\in W_\Omega^{s,2}$. Using the Lax-Milgram lemma, we can easily show that \eqref{NePr} has a weak solution.

Let $f\in L^2(\Omega)$ and $u\in W_\Omega^{s,2}$ a weak solution of \eqref{NePr}. We want to emphasize the following points.
\begin{itemize}
\item We do not know if $((-\Delta)^su)|_{\Omega}=f$ strongly, in the sense that (\(-\Delta)^su\) exists almost everywhere and $(-\Delta)^su=f$ a.e. in $\Omega$.
\item We do know that $\mathcal N^su=0$ a.e.  in $\Omb$. Indeed, taking $v\in\mathcal D(\Omb)$ as a test function in \eqref{SN} and calculating, we get that (notice that $v=0$ in $\Omega$)
\begin{align*}
0=\mathcal E(u,v)=&C_{n,s}\int_{\Omega}\int_{\RR^n\setminus\Omega} \frac{(u(x)-u(y))(v(x)-v(y))}{\vert x-y\vert^{n+2s} } dydx\\
=&C_{n,s}\int_{\RR^n\setminus\Omega}v(y)\left(\int_{\Omega} \frac{u(y)-u(x)}{\vert x-y\vert^{n+2s} } dx\right)dy\\
=&\int_{\RR^n\setminus\Omega}v(y)\mathcal N^su(y)dy.
\end{align*}
Since $v\in\mathcal D(\Omb)$ was arbitrary, it follows that  $\mathcal N^su=0$ a.e.  in $\Omb$.
\end{itemize}

Next, we recall that for a function $u\in W_\Omega^{s,2}$ we have defined $\mathcal N^su$ as follows:
\begin{align*}
    \mathcal{N}^s u (x):= C_{n,s} \int_\Omega \frac{u(x)-u(y)}{\vert x-y\vert ^{n+2s}} dy,\;\;x\in\Omb.
\end{align*}

Let
\begin{align}\label{rho}
    \rho(x):=\int_\Omega \frac{1}{\vert x-y\vert^{n+2s}} dy,\;\;x\in\Omb.
\end{align}

We have the following result.

\begin{lem}\label{Chara_Neumann_boundary}
Let \(u\in W^{s,2}_\Omega\). Then,
\begin{align*}
    \mathcal{N}^s u (x)=0,\;\;x\in\Omb,
\end{align*}
if and only if
\begin{align*}
    u(x)= \frac{1}{\rho(x)} \int_\Omega \frac{u(y)}{\vert x-y\vert ^{n+2s}}\;dy,\;\;x\in\Omb.
\end{align*}
\end{lem}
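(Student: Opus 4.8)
The plan is to observe that this is essentially a pointwise, $x$-by-$x$ algebraic identity once we recall the definition of $\mathcal{N}^s$ and $\rho$. Fix $x\in\Omb$. Both $\mathcal{N}^s u(x)$ and the integral $\int_\Omega u(y)|x-y|^{-n-2s}\,dy$ are finite for a.e.\ such $x$ by Lemma~\ref{lem:Nmap} together with the fact that $u\in W^{s,2}_\Omega\subset L^2(\Omega)$ and, for $x$ in a compact subset of $\Omb$, the kernel $y\mapsto|x-y|^{-n-2s}$ is bounded on $\Omega$ (so the integral even converges absolutely). Hence no convergence issues obstruct the manipulation below.

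The key computation is simply to pull $u(x)$ out of the defining integral for $\mathcal{N}^s$: for a.e.\ $x\in\Omb$,
\begin{align*}
\mathcal{N}^s u(x)
&= C_{n,s}\int_\Omega \frac{u(x)-u(y)}{|x-y|^{n+2s}}\,dy
= C_{n,s}\, u(x)\int_\Omega\frac{dy}{|x-y|^{n+2s}} - C_{n,s}\int_\Omega\frac{u(y)}{|x-y|^{n+2s}}\,dy\\
&= C_{n,s}\,\rho(x)\,u(x) - C_{n,s}\int_\Omega\frac{u(y)}{|x-y|^{n+2s}}\,dy,
\end{align*}
using that $u(x)$ does not depend on $y$ and recalling \eqref{rho}. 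Since $\Omega$ is bounded and $x\notin\bOm$, we have $0<\rho(x)<\infty$, so dividing by $C_{n,s}\rho(x)$ is legitimate. Therefore $\mathcal{N}^s u(x)=0$ is equivalent to $u(x)=\frac{1}{\rho(x)}\int_\Omega u(y)|x-y|^{-n-2s}\,dy$, and since this holds for a.e.\ $x\in\Omb$, the two stated conditions are equivalent. I would then remark that the same identity shows $\mathcal{N}^s u(x) = C_{n,s}\rho(x)\bigl(u(x) - \frac{1}{\rho(x)}\int_\Omega u(y)|x-y|^{-n-2s}\,dy\bigr)$, which will be convenient later when comparing with the Robin case.

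There is essentially no hard step here: the only thing to be careful about is the \emph{a.e.}\ qualifier and the finiteness of the integrals, which is why I would phrase the argument as holding for almost every $x\in\Omb$ and cite Lemma~\ref{lem:Nmap} (ensuring $\mathcal{N}^s u\in L^2_{\rm loc}(\Omb)$) together with the elementary observation that $\rho$ is finite and strictly positive on $\Omb$ because $\Omega$ is bounded and open and $|\Omega|>0$. The splitting of the integral into two pieces is justified since each piece is separately integrable on $\Omega$ for $x$ away from $\bOm$. No other obstacle is anticipated.
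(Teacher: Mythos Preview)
Your argument is correct and follows essentially the same approach as the paper: both proofs split the defining integral of $\mathcal{N}^s u(x)$ as $u(x)\rho(x)-\int_\Omega u(y)|x-y|^{-n-2s}\,dy$ and read off the equivalence. You are a bit more explicit about the finiteness and positivity of the integrals (and hence the legitimacy of dividing by $\rho(x)$), which is a harmless addition.
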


\begin{proof}
Let \(u \in W^{s,2}_\Omega\). Then, by definition the identity
\begin{align*}
    \mathcal{N}^s u (x)=0 \text{ for } x \in \Omb,
\end{align*}
is equivalent to the following:
\begin{align*}
    0&=  \int_\Omega \frac{u(x)-u(y)}{\vert x-y\vert ^{n+2s}} dy \\
    &= u(x) \int_\Omega \frac{1}{\vert x-y\vert ^{n+2s}} dy - \int_\Omega \frac{u(y)}{\vert x-y\vert ^{n+2s}} dy \\
    &= u(x) \rho(x) - \int_\Omega \frac{u(y)}{\vert x-y\vert ^{n+2s}} dy,\qquad\qquad x\in\Omb.
\end{align*}
This yields the claim and the proof is finished.
\end{proof}

Let \(u \in L^2(\Omega)\). 
We denote by \(u_N\) the extension of \(u\) as follows:

\begin{align}\label{ext-N}
    u_N(x):= \begin{cases} u(x) &\text{ if } x \in \Omega, \\
   \displaystyle \frac{1}{\rho(x)} \int_\Omega \frac{u(y)}{\vert x-y\vert ^{n+2s}}\;dy &\text{ if } x \in \Omb.
    \end{cases}
\end{align}
Since $\partial\Omega$ is a null set, we have that \(u_N\) is well defined for every \(u \in L^2(\Om)\).

It follows from Lemma \ref{Chara_Neumann_boundary} that if \(u_N \in W^{s,2}_\Omega\), then
$\mathcal{N}^s u_N  =0 \text{ a.e. in }  \Omb$.

Next, we give some properties of the  extension $u_N$ that will be used later in the paper.

\begin{lem}\label{cont-ext-N}
Let \(u \in L^2(\Omega)\) and $u_N$ be given in \eqref{ext-N}. 
Then the following assertions hold:
\begin{enumerate}
\item If $u\ge 0$ a.e. in $\Omega$, then $u_N\ge 0$ a.e. in $\RR^n$.

\item If $u=1$ a.e. in $\Omega$, then $u_N=1$ a.e. in $\RR^n$.

\item \(|u_N| \leq \vert u \vert_N\) a.e. in $\RR^n$.

\item If $\Omega$ is of class $C^1$ and $u\in C(\bOm)$, then $u_N\in C(\RR^n)$.
\end{enumerate}
\end{lem}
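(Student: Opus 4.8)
The plan is to prove the four assertions by direct computation with the explicit formula for $u_N$ given in \eqref{ext-N}, since all four statements only concern the values of $u_N$ on $\Omb$ (they are trivially true on $\Omega$). The key observation, which I would record at the outset, is that for each fixed $x\in\Omb$ the map $u\mapsto u_N(x)$ is an averaging operator: since $\rho(x)=\int_\Omega |x-y|^{-n-2s}\,dy$, we can write
\begin{align*}
u_N(x)=\int_\Omega u(y)\,d\mu_x(y),\qquad d\mu_x(y):=\frac{1}{\rho(x)}\frac{\mathbf 1_\Omega(y)}{|x-y|^{n+2s}}\,dy,
\end{align*}
and $\mu_x$ is a probability measure on $\Omega$ (this is exactly the content of $\rho(x)=\int_\Omega|x-y|^{-n-2s}dy$). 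Assertions (a), (b), (c) are then immediate consequences of elementary properties of integration against a probability measure: positivity of the integral gives (a); $\int_\Omega 1\,d\mu_x=1$ gives (b); and the triangle inequality $|\int_\Omega u\,d\mu_x|\le\int_\Omega|u|\,d\mu_x$ gives (c) (noting that $(|u|)_N(x)=\int_\Omega|u(y)|\,d\mu_x(y)$ by definition). One must only remark that $\partial\Omega$ is Lebesgue-null so that the case distinction in \eqref{ext-N} is well posed a.e., and that $|x-y|^{-n-2s}$ is locally integrable in $y$ near any $x\in\Omb$ so all integrals converge and the $L^2(\Omega)$ function $u$ is integrable against $d\mu_x$; for (b), if $u\equiv 1$ then $u_N(x)=\rho(x)/\rho(x)=1$.

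For assertion (d), where $\Omega$ is of class $C^1$ and $u\in C(\bOm)$, the strategy is to show separately that $u_N$ is continuous on $\Omega$ (trivial, $u_N=u$ there and $\Omega$ open), continuous on the open set $\Omb$, and continuous across $\pOm$, and then to patch these together. Continuity on $\Omb$: fix $x_0\in\Omb$; for $x$ in a small ball around $x_0$ staying in $\Omb$, the kernel $(x,y)\mapsto|x-y|^{-n-2s}$ is bounded and jointly continuous on (that ball)$\times\bOm$ since $|x-y|$ stays bounded below, so both $\rho(x)=\int_\Omega|x-y|^{-n-2s}dy$ and $\int_\Omega u(y)|x-y|^{-n-2s}dy$ depend continuously on $x$ by dominated convergence, and $\rho(x_0)>0$; hence $u_N$ is continuous at $x_0$. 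The genuinely delicate point, and the one I expect to be the main obstacle, is continuity at a boundary point $x_0\in\pOm$: here $\rho(x)\to\infty$ as $x\to x_0$ (because near $x_0$ the domain $\Omega$ occupies essentially a half-space, and $\int_{\text{half-space near }x_0}|x-y|^{-n-2s}dy$ diverges like $\delta(x)^{-2s}$ — this is where $C^1$-regularity of $\pOm$ enters, cf. the estimate \eqref{delta} for $\kappa$), so the ratio $u_N(x)=\rho(x)^{-1}\int_\Omega u(y)|x-y|^{-n-2s}dy$ is a genuine indeterminate form and one must show $u_N(x)\to u(x_0)=:c$.

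To handle the boundary limit I would argue as follows. Given $\varepsilon>0$, by continuity of $u$ at $x_0$ choose $r>0$ with $|u(y)-c|<\varepsilon$ for $y\in\Omega\cap B(x_0,r)$. Split $\Omega=(\Omega\cap B(x_0,r))\cup(\Omega\setminus B(x_0,r))$ and correspondingly write $u_N(x)-c=\int_\Omega (u(y)-c)\,d\mu_x(y)$ as $I_1(x)+I_2(x)$. On $B(x_0,r)$ we bound $|I_1(x)|\le\int_{\Omega\cap B(x_0,r)}|u(y)-c|\,d\mu_x(y)\le\varepsilon\,\mu_x(\Omega\cap B(x_0,r))\le\varepsilon$ using that $\mu_x$ is a probability measure. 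For $I_2(x)$, when $x$ is close to $x_0$ (say $|x-x_0|<r/2$), the factor $|x-y|^{-n-2s}$ is bounded by $(r/2)^{-n-2s}$ for $y\notin B(x_0,r)$, so $|I_2(x)|\le \rho(x)^{-1}(r/2)^{-n-2s}\int_\Omega(|u(y)|+|c|)\,dy\le C_r\,\rho(x)^{-1}$, which tends to $0$ as $x\to x_0$ because $\rho(x)\to\infty$. Thus $\limsup_{x\to x_0}|u_N(x)-c|\le\varepsilon$ for every $\varepsilon>0$, giving $u_N(x)\to u(x_0)$; since also $u_N(x_0)=u(x_0)$ by definition (as $x_0\in\pOm\subset\Omega^c$... — more precisely $x_0\in\Omb$ fails, so one instead notes that approaching $x_0$ from inside $\Omega$ gives $u_N(x)=u(x)\to u(x_0)$ by continuity of $u$, and approaching from $\Omb$ gives the limit just computed), $u_N$ is continuous at $x_0$. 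The only fact requiring proof here is $\rho(x)\to\infty$ as $\Omb\ni x\to x_0\in\pOm$, which follows from flattening the boundary via a $C^1$ chart near $x_0$ (so that $\Omega$ locally contains a fixed cone or truncated half-ball with vertex approaching $x_0$) and the elementary divergence of $\int_{H}|z|^{-n-2s}\,dz$ over any fixed-aperture cone $H$ with vertex at the origin; I would isolate this as a short lemma or cite the analogue \eqref{delta}. This completes all four parts.
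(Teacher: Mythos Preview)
Your arguments for (a), (b), (c) are correct and coincide with the paper's proof, which simply says these follow directly from the formula \eqref{ext-N} and the triangle inequality; your probability-measure reformulation makes this explicit and is exactly the intended reasoning.

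For (d), your direct proof is correct and more informative than the paper's. The paper does not carry out the boundary-limit argument at all; it simply observes that $\mathcal N^s u_N=0$ on $\Omb$ by construction and then invokes \cite[Proposition~5.2]{SDipierro_XRosOton_EValdinoci_2017a}, which asserts that any $W^{s,2}_\Omega$ function with vanishing nonlocal normal derivative and continuous interior data extends continuously across $\partial\Omega$. Your approach instead unpacks this black box: continuity on $\Omb$ via dominated convergence, and continuity at $x_0\in\partial\Omega$ via the near/far splitting of the averaging integral, with the far piece controlled by $\rho(x)^{-1}\to 0$. The key technical input you identify --- that $\rho(x)\to\infty$ as $\Omb\ni x\to\partial\Omega$, established by a $C^1$ boundary-flattening (interior-cone) argument --- is precisely what the cited proposition also relies on. Your route has the advantage of being self-contained and of clarifying exactly where the $C^1$ hypothesis enters; the paper's route is shorter but leans on the external reference.
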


\begin{proof}
Parts (a) and (b) follow directly from  \eqref{ext-N}.
Part (c) follows directly from the triangle inequality. For Part (d), let $u\in C(\bOm)$. Since $\mathcal N^su_N=0$ a.e. in $\Omb$, it follows from \cite[Proposition 5.2]{SDipierro_XRosOton_EValdinoci_2017a} that $u_N\in C(\RR^n)$. The proof is finished.
\end{proof}

Now we are ready to give a  characterization of the nonlocal Neumann exterior condition.

\begin{thm}\label{thm-38}
Let 
\begin{align}\label{dom-f-aN}
    D(\gota_N):=\Big\{u \in L^2(\Omega):\; u_N\in W^{s,2}_\Omega\Big\}
\end{align}
and \(\gota_N: D(\gota_N) \times D(\gota_N) \rightarrow \mathbb{R}\) be defined by
\begin{align}\label{def-forma1}
    \gota_N(u,v):=\mathcal{E}(u_N,v_N).
\end{align}
 Then \(\gota_N\) is a closed, symmetric and densely defined bilinear form on \(L^2(\Omega) \). The selfadjoint operator $A_N$ on $L^2(\Om)$ associated  with \(\gota_N\) is given by 
\begin{equation}\label{op-AN}
\begin{cases}
\displaystyle D(A_N):=\Big\{u \in L^2(\Omega):\;u_N\in W^{s,2}_\Omega\;\exists\;f\in L^2(\Om) \mbox{ such that }  u_N \mbox{ is a weak solution of } \eqref{NePr} \\
\hfill \mbox{with right hand side } f\Big\},\\
 A_Nu:=f.
 \end{cases}
\end{equation}
\end{thm}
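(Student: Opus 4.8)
The plan is to mimic the structure of the proof of Theorem~\ref{thm-31}, but to address the two new difficulties: the form $\gota_N$ is defined via the nonlinear extension operator $u\mapsto u_N$, so closedness and density require a bit of care, and the identification of $A_N$ has to be unwound through the definition of weak solution of \eqref{NePr}.

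First I would prove \textbf{closedness}. Define on $D(\gota_N)$ the norm $\|u\|_{D(\gota_N)}:=\|u_N\|_{W^{s,2}_\Omega}$; since $u\mapsto u_N$ is linear (the formula \eqref{ext-N} is linear in $u$) and injective (it restricts to the identity on $\Omega$), this is indeed a norm, and $\gota_{N,1}(u,u)=\|u_N\|_{W^{s,2}_\Omega}^2$ by the observation after \eqref{form-EN} together with the fact that $\mathcal E(u_N,u_N)$ is exactly the Gagliardo seminorm part of $\|u_N\|_{W^{s,2}_\Omega}^2$ (recall $W_\Omega^{s,2}=W^{s,2}_{0,\Omega}$). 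So it suffices to show $(D(\gota_N),\|\cdot\|_{D(\gota_N)})$ is complete, i.e. that the image of $D(\gota_N)$ under $u\mapsto u_N$ is a \emph{closed} subspace of the Hilbert space $W^{s,2}_\Omega$ (Lemma~\ref{lem22} with $\beta=0$). By Lemma~\ref{Chara_Neumann_boundary}, $\{u_N:u\in D(\gota_N)\}=\{w\in W^{s,2}_\Omega:\mathcal N^sw=0\text{ a.e. in }\Omb\}$. This is closed because $\mathcal N^s:W^{s,2}_\Omega\to L^2_{\rm loc}(\Omb)$ is continuous by Lemma~\ref{lem:Nmap}: if $w_k\to w$ in $W^{s,2}_\Omega$ with $\mathcal N^sw_k=0$, then $\mathcal N^sw_k\to\mathcal N^sw$ in $L^2_{\rm loc}(\Omb)$, hence $\mathcal N^sw=0$. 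Since $W^{s,2}$-convergence implies $L^2(\Omega)$-convergence, a Cauchy sequence in $D(\gota_N)$ converges in $L^2(\Omega)$ to some $u$ whose extension is the $W^{s,2}_\Omega$-limit $w$; by closedness $w=u_N\in W^{s,2}_\Omega$ with $\mathcal N^su_N=0$, so $u\in D(\gota_N)$ and convergence holds in $D(\gota_N)$. Symmetry of $\gota_N$ is immediate from the symmetry of $\mathcal E$, and non-negativity from $\gota_N(u,u)=\mathcal E(u_N,u_N)\ge 0$.

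Next, \textbf{density}: since $\mathcal D(\Omega)\subset\widetilde W^{s,2}_0(\Om)$ by Lemma~\ref{lem-Girs}(a), and for $\varphi\in\mathcal D(\Omega)$ we have $\varphi_N=\varphi_D\in W^{s,2}_\Omega$ (indeed $\varphi=0$ near $\pOm$ forces the integral in \eqref{ext-N} to extend $\varphi$ by something, but more cleanly: $\widetilde W^{s,2}_0(\Om)\subset D(\gota_N)$ is false in general, so instead) — more carefully, I would simply note that $D(\gota_N)\supset\widetilde W^{s,2}_0(\Om)$ is not what we want; rather, we only need $D(\gota_N)$ dense in $L^2(\Omega)$, and since $D(\gota_N)\supset\{u\in L^2(\Omega):u_D\in W^{s,2}_\Omega\}$? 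No. The correct argument: $1\in D(\gota_N)$ is false ($1_N=1$, but $1\notin W^{s,2}_\Omega$ only if... actually $1_N=1$ on $\RR^n$ by Lemma~\ref{cont-ext-N}(b), and $1\in W^{s,2}_\Omega$ since the Gagliardo integral over $\RR^{2n}\setminus(\Omc)^2$ of the constant is $0$ and $1\in L^2(\Omega)$); so $1\in D(\gota_N)$, and more generally any $u\in C^\infty(\bOm)$ lies in $D(\gota_N)$ because $u_N\in C(\RR^n)$ by Lemma~\ref{cont-ext-N}(d) and one checks $u_N\in W^{s,2}_\Omega$ using that $u_N$ decays and is locally regular (via the formula from \cite[Proposition~5.2]{SDipierro_XRosOton_EValdinoci_2017a}); since $C^\infty(\bOm)$ is dense in $L^2(\Omega)$, so is $D(\gota_N)$.

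Finally, the \textbf{identification of $A_N$}. By the abstract correspondence \eqref{op-A}, $u\in D(A_N)$ with $A_Nu=f$ iff $u\in D(\gota_N)$ and $\gota_N(u,v)=(f,v)_{L^2(\Omega)}$ for all $v\in D(\gota_N)$; by definition $\gota_N(u,v)=\mathcal E(u_N,v_N)$. The point is that as $v$ ranges over $D(\gota_N)$, the extension $v_N$ ranges over exactly $\{w\in W^{s,2}_\Omega:\mathcal N^sw=0\}$, which by Lemma~\ref{Chara_Neumann_boundary} is the natural space of test functions, but the definition of weak solution \eqref{SN} asks $\mathcal E(u_N,v)=(f,v)$ for \emph{all} $v\in W^{s,2}_\Omega$, not just those with $\mathcal N^sv=0$. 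So the key step — and the main obstacle — is to show these two conditions are equivalent: if $\mathcal E(u_N,w)=(f,w)_{L^2(\Omega)}$ holds for all $w\in W^{s,2}_\Omega$ with $\mathcal N^sw=0$, then it holds for all $w\in W^{s,2}_\Omega$. For this I would use that $\mathcal N^su_N=0$ (which holds automatically once $u_N\in W^{s,2}_\Omega$, by Lemma~\ref{Chara_Neumann_boundary}) and the integration-by-parts identity: for arbitrary $w\in W^{s,2}_\Omega$, decompose $\mathcal E(u_N,w)=\mathcal E(u_N,w-w_N^{\star})+\mathcal E(u_N,w_N^\star)$ where $w_N^\star$ is the $\mathcal N^s$-harmonic extension of $w|_\Omega$; since $u_N$ and $w-w_N^\star$ agree... actually the clean way is: $\mathcal E(u_N,w)$ depends on $w$ only through $w|_\Omega$ and the values of $w$ in $\Omb$, but integrating by parts (Proposition~\ref{prop:prop}(b) applied with roles swapped, legitimate once one knows $(-\Delta)^su_N\in L^2(\Omega)$ — which is part of what we want, so instead argue directly) one gets $\mathcal E(u_N,w)-\mathcal E(u_N,w_N^\star)=\int_{\Omc}(w-w_N^\star)\,\mathcal N^su_N\,dx+\tfrac{C_{n,s}}2\int_{\Omc}\int_{\Omc}\frac{(u_N(x)-u_N(y))((w-w_N^\star)(x)-(w-w_N^\star)(y))}{|x-y|^{n+2s}}dxdy$; the first term vanishes since $\mathcal N^su_N=0$, and the second vanishes because $u_N$ is $\mathcal N^s$-harmonic... this requires checking that the bilinear form $\mathcal E$ restricted to the exterior doesn't see $u_N$, which follows since $\mathcal E(u_N,\cdot)$ only integrates over $\RR^{2n}\setminus(\Omc)^2$ by definition \eqref{form-EN}, so there is no $\Omc\times\Omc$ term at all — hence $\mathcal E(u_N,w)=\mathcal E(u_N,w_N)$ automatically for every $w\in W^{s,2}_\Omega$, because $w$ and $w_N$ have the same restriction to $\Omega$ and $\mathcal E(u_N,\cdot)$ is determined by pairing across $\Omega\times\Omc$ and within $\Omega\times\Omega$, and the cross term $\int_\Omega\int_\Omc\frac{(u_N(x)-u_N(y))(w(x)-w(y))}{|x-y|^{n+2s}}$ does depend on $w|_{\Omc}$ — so this needs the genuine input that $\int_\Omega\frac{u_N(x)-u_N(y)}{|x-y|^{n+2s}}dx=\tfrac1{C_{n,s}}\mathcal N^su_N(y)=0$ for a.e.\ $y\in\Omc$, which lets one write $\int_\Omc w(y)\big(\int_\Omega\frac{u_N(y)-u_N(x)}{|x-y|^{n+2s}}dx\big)dy=\int_\Omc w(y)\mathcal N^su_N(y)dy=0$, eliminating the $w|_{\Omc}$-dependence. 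Thus $\gota_N(u,v)=\mathcal E(u_N,v_N)=\mathcal E(u_N,v)$ for any $v\in W^{s,2}_\Omega$ with $v|_\Omega=u'|_\Omega$ for $u'\in D(\gota_N)$, and hence the condition "$\gota_N(u,v)=(f,v)$ for all $v\in D(\gota_N)$" is equivalent to "$\mathcal E(u_N,v)=(f,v)$ for all $v\in W^{s,2}_\Omega$", i.e. $u_N$ is a weak solution of \eqref{NePr} with right-hand side $f$. This gives exactly \eqref{op-AN}, completing the proof.
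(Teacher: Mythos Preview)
Your closedness argument is essentially the paper's: the image of $u\mapsto u_N$ is the kernel of the continuous map $\mathcal N^s:W^{s,2}_\Omega\to L^2_{\rm loc}(\Omb)$, hence a closed subspace of the Hilbert space $W^{s,2}_\Omega$. That part is fine.

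\medskip

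\textbf{Density has a genuine gap.} Two statements you make are false. First, for $\varphi\in\mathcal D(\Omega)$ one does \emph{not} have $\varphi_N=\varphi_D$: the formula \eqref{ext-N} gives $\varphi_N(x)=\rho(x)^{-1}\int_\Omega \varphi(y)|x-y|^{-n-2s}dy$ for $x\in\Omb$, which is generically nonzero. Second, the inclusion $\widetilde W^{s,2}_0(\Omega)\subset D(\gota_N)$ is \emph{true}, not false; it is exactly Lemma~\ref{thm-41} of the paper, proved by the one-line computation $\mathcal E(u_N,u_N)-\mathcal E(u_D,u_D)=-C_{n,s}\int_{\Omc}\rho\, u_N^2\,dy\le 0$. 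That inclusion plus the density of $D(\gota_D)$ in $L^2(\Omega)$ (from Theorem~\ref{thm-31}) gives density immediately; this is how the paper proceeds. Your fallback via $C^\infty(\bOm)$ is not justified: Lemma~\ref{cont-ext-N}(d) requires $\Omega$ of class $C^1$ (the theorem assumes only Lipschitz), and continuity of $u_N$ does not by itself yield $u_N\in W^{s,2}_\Omega$.

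\medskip

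\textbf{The operator identification is a genuinely different route, and a good one.} The paper only proves $A_N\subseteq B$ directly (where $B$ is the form operator) and then imports selfadjointness of $A_N$ from \cite[Theorem~3.11]{SDipierro_XRosOton_EValdinoci_2017a} to force $A_N=B$. Your direct argument---that $\mathcal E(u_N,w)$ depends only on $w|_\Omega$ because the cross term $\int_{\Omc}w(y)\,\mathcal N^su_N(y)\,dy$ vanishes---avoids that external input entirely and is more self-contained. Two small points to tighten: (i) the Fubini step splitting the $\Omega\times(\Omc)$ integral needs the bound $\int_{\Omc}(w-w_N^\star)^2\rho\,dy<\infty$, which follows from the identity in the proof of Lemma~\ref{lem-inf}; (ii) to pass from ``$\gota_N(u,v)=(f,v)$ for all $v\in D(\gota_N)$'' to ``$\mathcal E(u_N,w)=(f,w|_\Omega)$ for all $w\in W^{s,2}_\Omega$'' you need that every such $w|_\Omega$ lies in $D(\gota_N)$, which is precisely Lemma~\ref{lem-inf}. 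Once you invoke it, your argument is complete and arguably cleaner than the paper's.
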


The proof of the theorem uses the following result.

\begin{lem}\label{thm-41} 
Let  \(D(\gota_D)\) and \(D(\gota_N)\) be the spaces defined in \eqref{dom-DC} and  \eqref{dom-f-aN}, respectively.
Then \(D(\gota_D)\subset D(\gota_N)\).
\end{lem}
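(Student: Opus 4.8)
The claim is that $D(\gota_D) \subset D(\gota_N)$, i.e., that every $u \in \widetilde{W}^{s,2}_0(\Om)$ satisfies $u_N \in W^{s,2}_\Om$. The plan is to show that for such a $u$, the difference $u_N - u_D$ lies in $W^{s,2}_\Om$; since $u_D \in W^{s,2}_\Om$ (by the very definition of $D(\gota_D)$), this gives $u_N \in W^{s,2}_\Om$. Note that $u_N - u_D$ vanishes on $\Om$ and equals $u_N$ on $\Omb$, so what must be controlled is the exterior contribution of $u_N$.

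\smallskip

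\textbf{Key steps.} First I would write out the finiteness requirement for $u_N \in W^{s,2}_\Om$: one needs $u_N \in L^2(\Om)$ (clear, since $u_N = u$ there), and the three interaction integrals over $\Om\times\Om$, $\Om\times\Omc$, $\Omc\times\Om$ to be finite. The $\Om\times\Om$ piece is exactly the one controlled by $u \in W^{s,2}(\Om)$, which holds since $\widetilde{W}^{s,2}_0(\Om) \hookrightarrow W^{s,2}(\Om)$. By symmetry only the mixed integral
\begin{align*}
I := \int_\Om \int_{\Omc} \frac{|u_N(x) - u_N(y)|^2}{|x-y|^{n+2s}}\;dy\,dx = \int_\Om\int_{\Omc}\frac{|u(x) - u_N(y)|^2}{|x-y|^{n+2s}}\;dy\,dx
\end{align*}
remains. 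The crucial observation is that, by Jensen's inequality applied to the probability measure $\rho(y)^{-1}|x'-y|^{-(n+2s)}\,dx'$ on $\Om$ (with $\rho$ as in \eqref{rho}), one has for $y \in \Omb$
\begin{align*}
|u(x) - u_N(y)|^2 = \left| \frac{1}{\rho(y)}\int_\Om \frac{u(x) - u(x')}{|x'-y|^{n+2s}}\;dx' \right|^2 \le \frac{1}{\rho(y)}\int_\Om \frac{|u(x)-u(x')|^2}{|x'-y|^{n+2s}}\;dx'.
\end{align*}
Substituting this bound into $I$ and using Fubini, $I$ is dominated by $\int_\Om\int_\Om |u(x)-u(x')|^2\, K(x,x')\,dx\,dx'$ where $K(x,x') = \int_{\Omc}\rho(y)^{-1}|x-y|^{-(n+2s)}|x'-y|^{-(n+2s)}\,dy$. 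The task then reduces to showing $K(x,x') \lesssim |x-x'|^{-(n+2s)}$ (or, more realistically, comparing $K$ against a kernel whose double integral against $|u(x)-u(x')|^2$ is finite for $u \in \widetilde{W}^{s,2}_0(\Om)$ — possibly invoking the weighted characterization \eqref{sobo2} and the estimate \eqref{delta} relating $\kappa$ and $\delta^{-2s}$).

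\smallskip

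\textbf{Main obstacle.} The delicate point is the kernel estimate for $K(x,x')$: one must show that the exterior integral defining $K$ does not produce a worse singularity than $|x-x'|^{-(n+2s)}$, and in particular handle the behaviour as $x$ or $x'$ approaches $\pOm$, where $\rho(y)$ can blow up. Here the Lipschitz regularity of $\pOm$ enters, together with the two-sided bound \eqref{delta} and the fact that $u/\delta^s \in L^2(\Om)$ for $u \in \widetilde{W}^{s,2}_0(\Om)$ (Remark \ref{rem-obs}); I expect the cleanest route is to split $\Omc$ into an annular region near $\pOm$ and a far region, estimating $\rho(y)^{-1} \lesssim \delta(y)^{n+2s}$ near the boundary and bounding the far part trivially, and then to fold the resulting boundary-weight into the norm via \eqref{sobo2}. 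The alternative — and perhaps simpler — argument avoids kernels entirely: use the decomposition $\|u_N\|_{W^{s,2}_\Om}^2 \le 2\|u_D\|_{W^{s,2}_\Om}^2 + 2\|u_N - u_D\|_{W^{s,2}_\Om}^2$ and bound the second term directly, observing that $u_N - u_D$ is supported on $\Omc$ with $u_N - u_D = \mathcal{N}^s$-harmonic extension data, and controlling its energy by the trace/extension estimate already implicit in \cite[Proposition 5.1 or 5.2]{SDipierro_XRosOton_EValdinoci_2017a}. Either way, the heart of the matter is the exterior interaction estimate, which is genuinely where the hypotheses on $\Om$ are used.
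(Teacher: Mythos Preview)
Your proposal does not complete the proof: you correctly identify the kernel estimate for $K(x,x')$ as the ``main obstacle'' and then leave it unresolved. That estimate is genuinely nontrivial along the route you chose, and neither the boundary-weight argument via \eqref{sobo2}--\eqref{delta} nor the vague appeal to extension estimates from \cite{SDipierro_XRosOton_EValdinoci_2017a} closes the gap as stated.

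The paper's argument bypasses all of this with a one-line algebraic identity. Instead of bounding the mixed integral $I$ by Jensen, simply compute the \emph{difference}
\[
\mathcal{E}(u_N,u_N)-\mathcal{E}(u_D,u_D)
= C_{n,s}\int_\Omega\int_{\Omc}\frac{-2u(x)u_N(y)+u_N^2(y)}{|x-y|^{n+2s}}\,dy\,dx.
\]
Now use the \emph{defining} relation of the Neumann extension: for $y\in\Omb$,
\(
\int_\Omega u(x)|x-y|^{-(n+2s)}\,dx=\rho(y)\,u_N(y).
\)
The cross term becomes $-2\rho(y)u_N^2(y)$ and the square term $\rho(y)u_N^2(y)$, so the whole difference collapses to
\[
\mathcal{E}(u_N,u_N)-\mathcal{E}(u_D,u_D)=-C_{n,s}\int_{\Omc}\rho(y)\,u_N^2(y)\,dy\le 0,
\]
which immediately gives $\mathcal{E}(u_N,u_N)\le\mathcal{E}(u_D,u_D)<\infty$, hence $u_N\in W^{s,2}_\Omega$. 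No kernel estimate, no Jensen, no Lipschitz regularity of $\partial\Omega$, and no recourse to \eqref{sobo2} or \eqref{delta} is needed. The observation you missed is that $u_N$ is tailor-made so that the first moment $\int_\Omega u(x)|x-y|^{-(n+2s)}dx$ factors through $\rho(y)u_N(y)$; once you expand the squares rather than estimate them, everything cancels. (This is of course the variational characterization in Proposition~\ref{prop-Neumann}: $u_N$ minimizes $\mathcal{E}(v,v)$ among extensions of $u$, so in particular beats $u_D$.)
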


\begin{proof}
Let \(u \in D(\gota_D)\) and $u_D$ be given by \eqref{uD}.
By definition  \(u_D \in W^{s,2}_\Omega\).  We have to show that \(u_N\in W^{s,2}_\Omega\). That is, we have to prove that
\begin{align*}
    \mathcal{E}(u_N,u_N)
    =\frac{C_{n,s}}{2}\int_\Omega \int_\Omega \frac{( u(x)-u(y))^2}{\vert x-y\vert^{n+2s} } dy \;dx   +C_{n,s}\int_\Omega \int_{\Omc} \frac{(u(x)-u_N(y))^2}{\vert x-y\vert^{n+2s} } dy \;dx 
    < \infty.
\end{align*}
Since \(u \in D(\gota_D)\), we have that
\begin{align*}
    \mathcal{E}(u_D,u_D)  =\frac{C_{n,s}}{2}\int_\Omega \int_\Omega \frac{ (u(x)-u(y))^2}{\vert x-y\vert^{n+2s} } dy \;dx   +C_{n,s}\int_\Omega \int_{\Omc} \frac{u^2(x)}{\vert x-y\vert^{n+2s} } dy \;dx <\infty.
\end{align*}
Obviously \(\mathcal{E}(u_N,u_N)\geq 0\). From this we can deduce that \(\mathcal{E}(u_N,u_N)-\mathcal{E}(u_D,u_D)> - \infty\). On the other hand we have that
\begin{align}\label{M2}
    \mathcal{E}(u_N,u_N)-\mathcal{E}(u_D,u_D) =&C_{n,s} \int_\Omega \int_{\Omc} \frac{-2u(x) u_N(y)+u_N^2(y)}{\vert x-y\vert^{n+2s}}\; dy \;dx \notag\\
    =& C_{n,s}\int_{\Omc}\Big( -2 u_N(y) \int_\Omega \frac{u(x)}{\vert x-y\vert^{n+2s} }\;dx\Big)\;dy\notag \\
    & +C_{n,s} \int_{\Omc}u_N^2(y) \int_\Omega \frac{1}{\vert x-y\vert^{n+2s} }dx\; dy \notag \\
    = &C_{n,s}\int_{\Omc}\Big( -2  \rho(y) u_N^2(y) +u_N^2(y) \rho(y) \Big)\;dy \notag \\
    = &-C_{n,s} \int_{\Omc} u_N^2(y)\rho(y)\; dy
    \leq 0
\end{align}
where we have used that \(\rho\) is a non-negative function. The estimate \eqref{M2} implies that
\begin{align*}
    0\leq \gota_N(u,u)=\mathcal{E}(u_N,u_N)\leq \mathcal{E}(u_D,u_D)=\gota_D(u,u) < \infty.
\end{align*}
The proof is finished.
\end{proof}

\begin{proof}[\bf Proof of Theorem \ref{thm-38}]
Firstly, since the extension operator  \( D(\gota_N) \rightarrow W^{s,2}_\Omega\),  $u\mapsto u_N$  is linear,  and \(\mathcal{E}\) is bilinear and symmetric, we can deduce that \(\gota_N\) is also bilinear and symmetric. 

Secondly, to show that \(\gota_N\) is closed we need to prove that \(D(\gota_N)\) endowed with the norm
\begin{align}\label{norm}
\|u\|_{D(\gota_N)}^2=\gota_N(u,u)+\|u\|_{L^2(\Omega)}^2=\mathcal E(u_N,u_N)+\|u\|_{L^2(\Omega)}^2
\end{align}
is a Hilbert space. Indeed, let \((u_k)_{k \in \mathbb{N}}\subset D(\gota_N)\) be such that

\begin{align*}
    \|u_k-u_m\|_{L^2(\Omega)}^2 + \gota_N(u_k-u_m,u_k-u_m) \rightarrow 0\;\mbox{ as }\; k,m\rightarrow \infty.
\end{align*}
This is the same as
\begin{align*}
    \|(u_k)_N-(u_m)_N\|_{L^2(\Omega)}^2 + \mathcal{E}((u_k)_N-(u_m)_N,(u_k)_N-(u_m)_N) \rightarrow 0 \;\mbox{ as }\; k,m\rightarrow \infty.
\end{align*}
Recall that \(W^{s,2}_\Omega\) endowed with the norm given in \eqref{norm-RV} is a Hilbert space (see Lemma \ref{lem22}). Thus, there is a function \(v \in W^{s,2}_\Omega\) such that \((u_k)_N \rightarrow v\) in $W_\Omega^{s,2}$ as $k\to\infty$. Using Lemma \ref{lem:Nmap}, we can deduce that (passing to a subsequence if necessary)
   $ \mathcal{N}^s v= \lim_{k\rightarrow \infty} \mathcal{N}^s (u_k)_N =0 \text{ a.e. in } \Omb$.
Let us define \(u: = v\vert_\Omega\). Then \cref{Chara_Neumann_boundary} implies that
 \(u_N=v\). Thus,
\begin{align*}
    &\lim_{k\rightarrow \infty}\|u-u_k\|_{L^2(\Omega)}^2+\gota_N(u-u_k,u-u_k) \\
    &= \lim_{k\rightarrow \infty}
    \|(u_k)_N-u_N\|_{L^2(\Omega)}^2 + \mathcal{E}((u_k)_N-u_N,(u_k)_N-u_N) =0.
\end{align*}
Hence, \(D(\gota_N)\) is complete and we have shown that the form \(\gota_N\) is closed.

By Lemma \ref{thm-41} $D(\gota_D)\subset D(\gota_N)$, and since $D(\gota_D)$ is dense in $L^2(\Om)$ (by Theorem \ref{thm-31}), we have that $D(\gota_N)$ is dense in $L^2(\Om)$. We have shown that $\gota_N$ is densely defined.

Thirdly, let \(B\) be the selfadjoint operator on $L^2(\Om)$ associated with \(\gota_N\) in the sense of \eqref{op-A}. 
We show that $B=A_N$.
Let $u\in D(A_N)$ and set $f:=A_Nu$.  Then by definition $u_N\in W_\Omega^{s,2}$. Thus, $u\in D(\gota_N)$.
Since $A_Nu=f$ in $\Omega$ and $u_N$ is a weak solution of \eqref{NePr} with right hand side \(f\) (by the  definition of $A_N$), we have that  
\begin{align*}
\int_{\Omega}wf\;dx=\mathcal E(u_N,w)
\end{align*}
for every $w\in W_\Omega^{s,2}$. In particular, we have that
\begin{align*}
\int_{\Omega}vf\;dx=\mathcal E(u_N,v_N)=\gota_N(u,v)
\end{align*}
for every $v\in D(\gota_N)$. Thus, $u\in D(B)$ and $A_Nu=Bu$. We have shown \(A_N \subseteq B\).  It has been shown in \cite[Theorem 3.11]{SDipierro_XRosOton_EValdinoci_2017a} that the operator \(A_N\) is selfadjoint (more precisely, it is closed, has a real spectrum and its eigenfunctions form an orthogonal system in \(L^2(\Om)\)). Since \(B\) is by definition a selfadjoint operator and \(A_N \subseteq B\), we can deduce that \(A_N=B\) (since selfadjoint operators cannot be subsets of each other).
We have shown that \(A_N=B\) and the proof is finished.
\end{proof}

\begin{remark}
{\em The operator $A_N$ is the realization in $L^2(\Omega)$ of $(-\Delta)^s$ with  the nonlocal Neumann exterior condition.}
\end{remark}

It is worth mentioning the following characterization of $D(\gota_N)$.
\begin{lem}\label{lem-inf}
Let $D(\gota_N)$ be the space defined in \eqref{dom-f-aN}. Then
\begin{align}\label{def-formaN}
D(\gota_N)=\Big\{u|_{\Omega}:\; u\in W_\Omega^{s,2}\Big\}.
\end{align}
\end{lem}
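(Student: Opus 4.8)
The plan is to prove the two set inclusions in \eqref{def-formaN} separately. The inclusion ``$\subseteq$'' is immediate from \eqref{dom-f-aN}: if $u\in D(\gota_N)$ then $u_N\in W_\Omega^{s,2}$ and $u_N|_\Omega=u$, so $u$ belongs to the right-hand side. For the reverse inclusion ``$\supseteq$'' I would take $u=w|_\Omega$ with $w\in W_\Omega^{s,2}$ and show $u\in D(\gota_N)$, i.e.\ that $u_N\in W_\Omega^{s,2}$. Since $u\in L^2(\Omega)$ holds trivially and $u_N$ is measurable on $\RR^n$, everything reduces to proving $\mathcal E(u_N,u_N)<\infty$. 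For this I would split $\mathcal E$, as in \eqref{form-EN}, into the diagonal integral over $\Omega\times\Omega$ and the cross integral over $\Omega\times\Omb$: the diagonal part of $\mathcal E(u_N,u_N)$ depends only on $u=w|_\Omega$, hence it equals the diagonal part of $\mathcal E(w,w)$ and is therefore finite. So the whole issue is to bound the cross term $C_{n,s}\int_\Omega\int_{\Omb}\frac{(u(x)-u_N(y))^2}{|x-y|^{n+2s}}\,dy\,dx$.

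The crucial observation is that, for each fixed $y\in\Omb$, the number $u_N(y)$ is the best constant $L^2$-approximation to $u$ in the weighted space $L^2\big(\Omega,|x-y|^{-(n+2s)}\,dx\big)$. Indeed, $c\mapsto\int_\Omega\frac{(u(x)-c)^2}{|x-y|^{n+2s}}\,dx$ is a quadratic polynomial in $c$ with positive leading coefficient $\rho(y)$ (see \eqref{rho}), whose unique minimizer is $c=\rho(y)^{-1}\int_\Omega\frac{u(x)}{|x-y|^{n+2s}}\,dx=u_N(y)$, with excess $\rho(y)\,(c-u_N(y))^2\ge 0$. Taking $c=w(y)$ yields, for a.e.\ $y\in\Omb$,
\begin{align*}
\int_\Omega\frac{(u(x)-u_N(y))^2}{|x-y|^{n+2s}}\,dx\le\int_\Omega\frac{(u(x)-w(y))^2}{|x-y|^{n+2s}}\,dx ,
\end{align*}
and integrating this inequality in $y$ over $\Omb$ (interchanging the order of integration afterwards is legitimate because all integrands are non-negative, by Tonelli's theorem) bounds the cross term of $\mathcal E(u_N,u_N)$ by the cross term of $\mathcal E(w,w)$, which is finite since $w\in W_\Omega^{s,2}$. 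Combined with the finiteness of the diagonal term, this gives $\mathcal E(u_N,u_N)<\infty$, whence $u_N\in W_\Omega^{s,2}$ and $u\in D(\gota_N)$.

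I do not expect a genuine obstacle; once the variational characterization of the Neumann extension is spotted, the argument is short. The only points requiring care are: that $\rho(y)\in(0,\infty)$ for a.e.\ $y$, which holds because $\Omega$ is bounded (so $\rho$ is finite on the open set $\Omb$) and has positive Lebesgue measure (so $\rho>0$), making $u_N(y)$ a well-defined finite number and the completion of the square meaningful; the measurability of $u_N$ on all of $\RR^n$; and the consistent use of Tonelli's theorem in place of Fubini's, which is justified precisely because every integrand occurring is non-negative.
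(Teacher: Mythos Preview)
Your proposal is correct and is essentially the same argument as the paper's proof. The paper computes $\mathcal E(v_N,v_N)-\mathcal E(u,u)=-C_{n,s}\int_{\Omc}\rho(y)\,(v_N(y)-u(y))^2\,dy\le 0$ by direct expansion of the cross term, which is exactly your completion-of-the-square identity (your ``excess $\rho(y)(c-u_N(y))^2$'') integrated in $y$; your variational interpretation of $u_N(y)$ as the weighted mean is a nice conceptual wrapping of the same algebra.
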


\begin{proof}
Denote by $D$ the right hand side of \eqref{def-formaN}. It is clear that $D(\gota_N)\subseteq D$. Now let $v\in D$. Then, $v=u|_{\Omega}$ for some $u\in W_\Omega^{s,2}$. We have to show that $\mathcal E(v_N,v_N)<\infty$. Calculating we get that
\begin{align*}
\mathcal E(v_N,v_N)-\mathcal E(u,u)=&C_{n,s}\int_{\Om}\int_{\Omc}\frac{(v(x)-v_N(y))^2-(v(x)-u(y))^2}{|x-y|^{n+2s}}\;dy\;dx\\
=&C_{n,s}\int_{\Omc}\Big(-\rho(y)v_N^2(y)+2\rho(y) v_N(y)u(y)-\rho(y) u^2(y)\Big)\;dy\\
=&-C_{n,s}\int_{\Omc}\rho(y)\left(v_N(y)-u(y)\right)^2\;dy\le 0.
\end{align*}
We have shown that $\mathcal E(v_N,v_N)\le \mathcal E(u,u)<\infty$ and the proof is finished.
\end{proof}

We have the following result as a direct consequence of the proof of Lemma \ref{lem-inf}.

\begin{proposition}\label{prop-Neumann}
Let $(\gota_N,D(\gota_N))$ be the form defined in \eqref{dom-f-aN}-\eqref{def-forma1}. Then
\begin{align}\label{inf-N}
\gota_N(u,u):=\mathcal E(u_N,u_N)=\inf\Big\{\mathcal E(v,v):\; v\in W_\Omega^{s,2}\mbox{ and } v|_{\Omega}=u\Big\}.
\end{align}
In other words, for $u\in L^2(\Omega)$, we have that $u_N$ is the smallest extension in $W_\Om^{s,2}$ with respect to the $W_\Om^{s,2}$-norm, or equivalently, the infimum in the right hand side of \eqref{inf-N} is attained at $u_N$.
\end{proposition}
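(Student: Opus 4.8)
The plan is to read off Proposition~\ref{prop-Neumann} as an immediate corollary of the computation carried out in the proof of Lemma~\ref{lem-inf}, so the main work is just to repackage that identity and check that the infimum is genuinely attained. First I would fix $u\in D(\gota_N)$ (so that $u_N\in W_\Omega^{s,2}$, which is exactly the membership needed for the statement to make sense) and consider the set of competitors $\mathcal{A}:=\{v\in W_\Omega^{s,2}:\;v|_\Omega=u\}$. Note $u_N\in\mathcal{A}$ by Lemma~\ref{Chara_Neumann_boundary} and the definition of $D(\gota_N)$, so the set is nonempty and $u_N$ is a legitimate candidate for the infimum.

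Next I would repeat the one-line algebraic identity from the proof of Lemma~\ref{lem-inf} with $u_N$ in the role there played by $v_N$: for any $v\in\mathcal{A}$, since $v$ and $u_N$ agree on $\Omega$, the difference $\mathcal{E}(v,v)-\mathcal{E}(u_N,u_N)$ collapses to an integral over $\Omc$ only, namely
\begin{align*}
\mathcal{E}(v,v)-\mathcal{E}(u_N,u_N)=C_{n,s}\int_{\Omc}\rho(y)\big(v(y)-u_N(y)\big)^2\;dy\ge 0,
\end{align*}
because the interior double integral and the two mixed terms involving $u(x)$ are identical for $v$ and $u_N$, and the remaining pure-exterior terms combine into a perfect square after substituting $\int_\Omega |x-y|^{-n-2s}\,dx=\rho(y)$ and $\int_\Omega u(x)|x-y|^{-n-2s}\,dx=\rho(y)u_N(y)$ (the latter being precisely the defining relation \eqref{ext-N} for $u_N$). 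This shows $\mathcal{E}(v,v)\ge\mathcal{E}(u_N,u_N)$ for every $v\in\mathcal{A}$, hence $\gota_N(u,u)=\mathcal{E}(u_N,u_N)\le\inf\{\mathcal{E}(v,v):v\in\mathcal{A}\}$; and since $u_N\in\mathcal{A}$ the reverse inequality is trivial, giving equality with the infimum attained at $u_N$. The final sentence of the statement is then just a restatement: because $\rho>0$ a.e. on $\Omc$, the displayed identity also shows the minimizer is unique, so $u_N$ is the unique (hence "smallest") extension realizing the $W_\Omega^{s,2}$-energy, and since the $W_\Omega^{s,2}$-norm differs from $\mathcal{E}(\cdot,\cdot)^{1/2}$ only by the $L^2(\Omega)$-term (which is the same $\|u\|_{L^2(\Omega)}$ for every competitor), minimizing the energy is the same as minimizing the norm among extensions.

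I do not expect any serious obstacle here; the only point requiring a moment of care is the bookkeeping in the algebraic identity — making sure that when $v$ and $u_N$ coincide on $\Omega$ the two symmetric mixed contributions $\int_\Omega\int_{\Omc}$ and $\int_{\Omc}\int_\Omega$ really do cancel against the corresponding $u_N$ terms and leave only $\rho(y)(v(y)-u_N(y))^2$, which is exactly the computation already displayed in Lemma~\ref{lem-inf} with the harmless relabelling $v_N\rightsquigarrow u_N$, $u\rightsquigarrow v$. One should also note in passing that $\mathcal{E}(u_N,u_N)<\infty$ is guaranteed by $u\in D(\gota_N)$, so the infimum is finite and the statement is not vacuous. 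Everything else is formal.
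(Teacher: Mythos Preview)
Your proposal is correct and follows exactly the paper's approach: the paper states that Proposition~\ref{prop-Neumann} is a direct consequence of the proof of Lemma~\ref{lem-inf}, and you have correctly unpacked that consequence by relabelling the identity computed there to obtain $\mathcal{E}(v,v)-\mathcal{E}(u_N,u_N)=C_{n,s}\int_{\Omc}\rho(y)(v(y)-u_N(y))^2\,dy\ge 0$ for every competitor $v$. Your additional remarks on uniqueness via $\rho>0$ and on the equivalence between minimizing $\mathcal{E}$ and minimizing the $W_\Omega^{s,2}$-norm are accurate bonuses not spelled out in the paper.
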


Denote by $T_N=(e^{-tA_N})_{t\ge 0}$ the semigroup on $L^2(\Omega)$ generated by $-A_N$. 
 
\begin{thm} \label{Neumann_positive}
The semigroup $T_N$ is positivity-preserving.
\end{thm}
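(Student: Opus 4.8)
The plan is to verify condition (i) of Remark \ref{rem1}, namely that $u \in D(\gota_N)$ implies $|u| \in D(\gota_N)$ together with the inequality $\gota_N(|u|,|u|) \le \gota_N(u,u)$; since $\gota_N$ is already known (by Theorem \ref{thm-38}) to be a closed, symmetric, densely defined bilinear form on $L^2(\Omega)$, this is exactly what is needed to conclude that $T_N$ is positivity-preserving. The essential structural input is Lemma \ref{cont-ext-N}(c), which says $|u_N| \le |u|_N$ a.e.\ in $\RR^n$, where $|u|_N$ denotes the Neumann extension of the function $|u|$ (not to be confused with the pointwise absolute value $|u_N|$ of the extension of $u$).

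First I would take $u \in D(\gota_N)$, so that $u_N \in W^{s,2}_\Omega$. Applying the reverse triangle inequality pointwise gives
\begin{align*}
\mathcal E(|u_N|,|u_N|) = \frac{C_{n,s}}{2}\int\int_{\RR^{2n}\setminus(\Omc)^2}\frac{\big||u_N|(x)-|u_N|(y)\big|^2}{|x-y|^{n+2s}}\;dx\,dy \le \mathcal E(u_N,u_N) < \infty,
\end{align*}
so $|u_N| \in W^{s,2}_\Omega$. Next I would compare $|u|_N$ with $|u_N|$: Lemma \ref{cont-ext-N}(c) gives $|u_N| \le |u|_N$ a.e., and moreover both functions agree with $|u|$ on $\Omega$. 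Using the variational characterization of the Neumann extension from Proposition \ref{prop-Neumann} — that $\mathcal E(|u|_N,|u|_N)$ is the infimum of $\mathcal E(w,w)$ over all $w \in W^{s,2}_\Omega$ with $w|_\Omega = |u|$ — and the fact that $|u_N| \in W^{s,2}_\Omega$ is precisely such a competitor $w$, I obtain
\begin{align*}
\gota_N(|u|,|u|) = \mathcal E(|u|_N,|u|_N) \le \mathcal E(|u_N|,|u_N|) \le \mathcal E(u_N,u_N) = \gota_N(u,u).
\end{align*}
In particular $\mathcal E(|u|_N,|u|_N) < \infty$, so $|u| \in D(\gota_N)$, and the displayed chain is exactly the required inequality. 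By Remark \ref{rem1}(i), $T_N$ is positivity-preserving.

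The one point that needs care — and which I regard as the crux — is the interplay between the three objects $u_N$, $|u_N|$, and $|u|_N$: the extension operation does not commute with taking absolute values, so one cannot simply mimic the Dirichlet computation in Theorem \ref{thm-33} (where $|u_D| = |u|_D$ trivially). The device that resolves this is precisely the combination of Lemma \ref{cont-ext-N}(c) with the minimality property of Proposition \ref{prop-Neumann}: $|u_N|$ is an admissible (but generally non-optimal) extension of $|u|$, so its energy dominates that of the optimal extension $|u|_N$, while being itself dominated by the energy of $u_N$ via the reverse triangle inequality. Alternatively, one could bypass Proposition \ref{prop-Neumann} and argue directly from the identity computed in the proof of Lemma \ref{thm-41}, namely $\mathcal E(w_N,w_N) - \mathcal E(w_D,w_D) = -C_{n,s}\int_{\Omc} w_N^2(y)\rho(y)\,dy$ applied with $w = |u|$, but the variational formulation is cleaner. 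No further subtlety arises, since $L^2(\Omega)$-membership of $|u|$ is immediate.
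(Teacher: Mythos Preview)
Your proof is correct, but it takes a different route from the paper's. The paper argues by splitting $\mathcal E$ into its $\Omega\times\Omega$ and $\Omega\times(\Omc)$ pieces: the first is handled by the reverse triangle inequality, and for the second a direct computation shows that the difference equals $C_{n,s}\int_{\Omc}\rho(y)\bigl(u_N^2(y)-|u|_N^2(y)\bigr)\,dy$, which is nonpositive by Lemma~\ref{cont-ext-N}(c). You instead insert the intermediate object $|u_N|$ and use the variational characterization of Proposition~\ref{prop-Neumann}: $|u_N|$ is an admissible extension of $|u|$, so $\mathcal E(|u|_N,|u|_N)\le\mathcal E(|u_N|,|u_N|)$, and the reverse triangle inequality applied to the full form $\mathcal E$ gives $\mathcal E(|u_N|,|u_N|)\le\mathcal E(u_N,u_N)$. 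This is cleaner and more conceptual, and it transports verbatim to the Robin case via Proposition~\ref{prop-Robin}. One remark: despite what you say, Lemma~\ref{cont-ext-N}(c) plays no role in your argument---the only facts you use about $|u_N|$ are that it lies in $W^{s,2}_\Omega$ and restricts to $|u|$ on $\Omega$, neither of which requires the pointwise comparison $|u_N|\le|u|_N$. That lemma is essential in the paper's computation but superfluous in yours; the minimality in Proposition~\ref{prop-Neumann} absorbs it.
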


\begin{proof}
Let \(u \in D(\gota_N)\). 
We want to show that \(\gota_N(\vert u \vert,|u|)\le\gota_N(u,u)\). Notice that for  $v,w\in W_\Om^{s,2}$, $\mathcal E(v,w)$ is a sum of integrals over \(\Omega \times \Omega\) and over  \(\Omega \times (\Omc)\) (see \eqref{form-EN}). Firstly, let us inspect the \(\Omega \times \Omega\) part. We define
\begin{align}\label{F-Omega}
    \mathcal{E}_\Omega(v,w):=\frac{C_{n,s}}{2} \int_\Omega \int_\Omega \frac{(v(x)-v(y))(w(x)-w(y))}{\vert x-y\vert^{n+2s}} dx\; dy.
\end{align}
The reverse triangle inequality yields 
\begin{align}\label{N1}
    \mathcal{E}_\Omega (\vert u\vert,|u| ) &=\frac{C_{n,s}}{2}\int_\Omega \int_\Omega \frac{\big\vert \vert u(x)\vert -\vert u(y)\vert \big \vert^2}{\vert x-y\vert^{n+2s}} dx\; dy \notag\\
    &\leq \frac{C_{n,s}}{2}\int_\Omega \int_\Omega \frac{\big\vert u(x) - u(y) \big \vert^2}{\vert x-y\vert^{n+2s}} dx\; dy
 =\mathcal{E}_\Omega(u,u).
\end{align}

Secondly, for the \(\Omega \times (\Omc)\) part we have that
\begin{align}\label{N2}
    &C_{n,s}\int_\Omega \int_{\Omc} \frac{(\vert u \vert (x)-\vert u \vert _N(y))^2-(u(x)-u_N(y))^2}{\vert x-y\vert ^{n+2s}} dy \; dx\notag\\
    &=C_{n,s}\int_{\Omc}\Big(-2 \rho(y)  \vert u \vert _N^2(y) + \rho(y) \vert u \vert_N^2(y) + 2  \rho(y) u_N^2(y)-\rho(y) u_N^2(y) \Big)\;dy\notag\\
    &=C_{n,s} \int_{\Omc} \Big(- \rho(y) \vert u \vert_N^2 (y) +  \rho(y) u_N^2(y)\Big)\;dy\notag\\
    &=C_{n,s} \int_{\Omc}\rho(y)\Big (u_N^2(y)-\vert u \vert_N^2(y)\Big)\;dy.
\end{align}
Using the assertion \((c)\) in Lemma \ref{cont-ext-N} we can deduce from \eqref{N2} that
\begin{align}\label{N3}
     &C_{n,s}\int_\Omega \int_{\Omc} \frac{(\vert u \vert (x)-\vert u \vert _N(y))^2-(u(x)-u_N(y))^2}{\vert x-y\vert ^{n+2s}} dy \; dx\notag\\
     &=C_{n,s} \int_{\Omc}\rho(y)\Big (u_N^2(y)-\vert u \vert_N^2(y)\Big)\;dy 
    \leq 0.
\end{align}
Combining \eqref{N1}-\eqref{N3} we get that \(\gota_N(\vert u \vert,|u|) \leq \gota_N(u,u)\).
The proof is finished.
\end{proof}

\subsection{The nonlocal Robin exterior condition}
We recall that both Dirichlet and the nonlocal Neumann exterior conditions where realized by some kind of extension to $\RR^n$ of functions defined in  \(\Omega\). Here we also need to find an appropriate extension.  Firstly, let $f\in L^2(\Omega)$, $\beta\in L^1(\Omc)$ and consider the following Robin problem:
\begin{equation}\label{RoPr}
(-\Delta)^su=f\;\;\mbox{ in }\;\Omega,\;\;\; \mathcal N^su+\beta u=0\;\mbox{ in }\;\Omb.
\end{equation}
By a weak solution to \eqref{RoPr}, we mean a function $u\in W_{\beta,\Omega}^{s,2}$ such that
\begin{align} \label{RoPrWe}
\mathcal E(u,v)+\int_{\Omc}\beta uv\;dx=\int_{\Omega}fv\;dx
\end{align}
for every $v\in W_{\beta,\Omega}^{s,2}$. Here also the existence and uniqueness of weak solutions is easy to prove.
As in the case of the Neuman problem, to the best of our knowledge, the following is an open problem: Let \(u\) be a weak solution of the  Robin problem \eqref{RoPr}. Is \(u\) a strong solution in the sense that \eqref{RoPr} holds almost everywhere? As in the Neumann case,  in \eqref{RoPr}, it is easy to see that the exterior condition $ \mathcal N^su+\beta u=0$ in $\Omb$ holds almost everywhere for weak solutions.

We start with the following result.

\begin{lem}\label{lem-314}
Let  \(\beta \in L^1(\Omc)\) be a fixed non-negative function and $u\in W_\Omega^{s,2}$. Then
\begin{align}\label{RBC}
\mathcal N^su(x)+\beta(x)u(x)=0,\;\;x\in\Omb,
\end{align}
if and only if
\begin{align}\label{ex-u}
    u(x)=\frac{C_{n,s}\rho(x)}{C_{n,s}\rho(x)+\beta(x)}u_N(x)=\frac{C_{n,s}}{C_{n,s}\rho(x)+\beta(x)}\int_\Omega \frac{u(y)}{\vert x-y\vert^{n+2s}} dy,\;\;\;x\in\Omb,
\end{align}
where we recall that $\rho(x)$ has been defined in \eqref{rho} and $u_N$ is given in \eqref{ext-N}.
\end{lem}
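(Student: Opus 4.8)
The plan is to mimic the proof of Lemma~\ref{Chara_Neumann_boundary}, since \eqref{RBC} is just an algebraic identity pointwise in $x\in\Omb$ once we unfold the definition of $\mathcal N^s$. First I would fix $x\in\Omb$ and write out $\mathcal N^su(x)$ explicitly using the definition \eqref{NLND}, splitting the integrand as in the Neumann lemma:
\begin{align*}
\mathcal N^su(x)=C_{n,s}\int_\Omega\frac{u(x)-u(y)}{|x-y|^{n+2s}}\;dy
=C_{n,s}u(x)\int_\Omega\frac{dy}{|x-y|^{n+2s}}-C_{n,s}\int_\Omega\frac{u(y)}{|x-y|^{n+2s}}\;dy,
\end{align*}
which by the definition \eqref{rho} of $\rho$ equals $C_{n,s}\rho(x)u(x)-C_{n,s}\int_\Omega\frac{u(y)}{|x-y|^{n+2s}}\;dy$. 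Hence the exterior condition \eqref{RBC}, namely $\mathcal N^su(x)+\beta(x)u(x)=0$, is equivalent to
\begin{align*}
\big(C_{n,s}\rho(x)+\beta(x)\big)u(x)=C_{n,s}\int_\Omega\frac{u(y)}{|x-y|^{n+2s}}\;dy,\qquad x\in\Omb.
\end{align*}

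Next I would observe that the coefficient $C_{n,s}\rho(x)+\beta(x)$ is strictly positive for a.e.\ $x\in\Omb$: indeed $C_{n,s}>0$ by \eqref{CN}, and $\rho(x)>0$ for every $x\in\Omb$ since it is the integral over the open set $\Omega$ (which has positive measure) of a strictly positive function, while $\beta\ge 0$. Therefore we may divide by it, obtaining
\begin{align*}
u(x)=\frac{C_{n,s}}{C_{n,s}\rho(x)+\beta(x)}\int_\Omega\frac{u(y)}{|x-y|^{n+2s}}\;dy,\qquad x\in\Omb,
\end{align*}
which is the right-hand expression in \eqref{ex-u}. The first equality in \eqref{ex-u}, involving $u_N$, is then immediate from the definition \eqref{ext-N} of $u_N$, since $\frac{1}{\rho(x)}\int_\Omega\frac{u(y)}{|x-y|^{n+2s}}\,dy=u_N(x)$ gives $\int_\Omega\frac{u(y)}{|x-y|^{n+2s}}\,dy=\rho(x)u_N(x)$, whence the factor becomes $\frac{C_{n,s}\rho(x)}{C_{n,s}\rho(x)+\beta(x)}u_N(x)$. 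Conversely, multiplying \eqref{ex-u} back through by $C_{n,s}\rho(x)+\beta(x)$ and reversing the computation recovers \eqref{RBC}, so the equivalence holds in both directions.

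I do not anticipate any serious obstacle; the only points requiring a word of care are that the manipulations are valid pointwise a.e.\ and that all the integrals appearing are finite. For the latter I would invoke that $u\in W_\Omega^{s,2}$ so that $\mathcal N^su\in L^2_{\rm loc}(\Omb)$ by Lemma~\ref{lem:Nmap}, and that $\rho$ is finite on $\Omb$ (it is continuous there, being an integral of a kernel singular only on $\pOm$), so each of the two terms $\rho(x)u(x)$ and $\int_\Omega\frac{u(y)}{|x-y|^{n+2s}}\,dy=\rho(x)u_N(x)$ is finite for a.e.\ $x\in\Omb$; this justifies splitting the defining integral of $\mathcal N^su$ into its two summands. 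The strict positivity of the denominator $C_{n,s}\rho(x)+\beta(x)$, needed to pass from the product form to the form solved for $u(x)$, is the one slightly substantive remark, and it is elementary. Everything else is a direct transcription of the argument given for Lemma~\ref{Chara_Neumann_boundary} with the extra additive term $\beta(x)u(x)$.
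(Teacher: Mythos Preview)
Your proposal is correct and follows essentially the same approach as the paper: unfold the definition of $\mathcal N^s u$, collect the terms in $u(x)$ to obtain $(C_{n,s}\rho(x)+\beta(x))u(x)=C_{n,s}\rho(x)u_N(x)$, and observe that this is equivalent to \eqref{ex-u}. The paper's proof is slightly terser (it simply states the chain of equalities and concludes), whereas you add the explicit remarks on strict positivity of $C_{n,s}\rho(x)+\beta(x)$ and on finiteness of the individual integrals; these are harmless elaborations of points the paper takes for granted.
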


\begin{proof}
Let  \(u \in W^{s,2}_\Omega\). A simple calculation yields that the  condition \eqref{RBC}, that is,
\begin{align*}
    0&=\mathcal{N}^s u(x)+ \beta(x) u(x) \\
    &=C_{n,s}u(x)\rho(x)-C_{n,s}\rho(x) u_N(x) + \beta(x) u(x)\\
     &=\Big(C_{n,s}\rho(x)+\beta(x)\Big)u(x)-C_{n,s}\rho(x) u_N(x) ,\;\;x\in\Omb,
\end{align*}
is equivalent to \eqref{ex-u}.
The proof is finished.
\end{proof}

For a function $u\in L^2(\Om)$ we define its extension \(u_R\) as follows:
\begin{align*}
    u_R(x):=\begin{cases} u(x) &\text{ if } x \in \Omega, \\
 \displaystyle   \frac{C_{n,s}}{C_{n,s}\rho(x)+\beta(x)}\int_\Omega \frac{u(y)}{\vert x-y\vert^{n+2s}} dy &\text{ if } x\in\Omb.
    \end{cases}
\end{align*}
As for the Neumann case, we have that $u_R$ is well defined for every $u\in L^2(\Omega)$.

\begin{remark}\label{rem-uR}
{\em Let $u\in W_\Om^{s,2}$. By Lemma \ref{lem-314} we have that $u_R$ satisfies \eqref{RBC}. The identity \eqref{RBC} is called the nonlocal Robin exterior condition.  If $u\ge 0$ a.e. in $\Om$, then $u_R\ge 0$ a.e. in $\RR^n$. In addition, it follows from \eqref{ex-u} that 
\begin{align}\label{N-to-R}
u_R(x)=\frac{C_{n,s}\rho(x)}{C_{n,s}\rho(x)+\beta(x)}u_N(x)\;\mbox{ for a.e. } \;x\in\Omb.
\end{align}
Note that  $|u_R| \leq |u|_R$ a.e. in $\RR^n$ as in Lemma \ref{cont-ext-N} for the Neumann exterior condition.
}
\end{remark}

Before characterizing our operator we need some preparations. 

\begin{lem}\label{lem-35}
Let $u\in W^{s,2}_\Omega$ and  \((u_k)_{k\in \mathbb{N}}\subset W^{s,2}_\Omega\) be such that \(u_k \rightarrow u\) in $W_\Omega^{s,2}$ as $k\to\infty$. Then there is a subsequence, that we still denote by \((u_k)_{k\in\NN}\), such that  \(u_k \rightarrow u\) pointwise a.e. in $\RR^n$ as $k\to\infty$.
\end{lem}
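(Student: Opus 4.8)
The plan is to extract convergence of the norm components from convergence in $W^{s,2}_\Omega$ and then use standard measure-theoretic arguments to get a pointwise a.e.\ convergent subsequence on all of $\RR^n$. The key observation is that the norm on $W^{s,2}_\Omega$, given in \eqref{norm-RV} with $\beta=0$, controls both the $L^2(\Omega)$ norm of the restriction to $\Omega$ and the Gagliardo-type seminorm over $\RR^{2n}\setminus(\RR^n\setminus\Omega)^2$, and in particular over $\Omega\times(\RR^n\setminus\Omega)$.

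First I would split $\RR^n = \Omega \cup (\RR^n\setminus\overline\Omega)$ (the boundary being a null set). On $\Omega$: since $u_k\to u$ in $W^{s,2}_\Omega$, the restrictions $u_k|_\Omega \to u|_\Omega$ in $L^2(\Omega)$, so by the classical Riesz--Fischer argument there is a subsequence converging pointwise a.e.\ in $\Omega$. Next I pass to the exterior part. The quantity
\begin{align*}
\int_\Omega\int_{\Omc}\frac{|(u_k-u)(x)-(u_k-u)(y)|^2}{|x-y|^{n+2s}}\,dy\,dx
\end{align*}
tends to $0$ as $k\to\infty$ along the chosen subsequence. Fix any compact set $K\subset\Omc$; then for $x$ ranging over a fixed ball $B\subset\Omega$ and $y\in K$, the kernel $|x-y|^{-n-2s}$ is bounded below by a positive constant $c_{B,K}$, and hence
\begin{align*}
c_{B,K}\,|B|\int_K |(u_k-u)(y)|^2\,dy \le 2\int_B\int_K\frac{|(u_k-u)(y)|^2}{|x-y|^{n+2s}}\,dy\,dx + 2|K|\int_B\frac{|(u_k-u)(x)|^2\,dx}{\operatorname{dist}(B,K)^{n+2s}},
\end{align*}
after using $|a-b|^2\le 2|a|^2+2|b|^2$ with $a=(u_k-u)(y)$, $b=(u_k-u)(x)$. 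Both terms on the right go to $0$ (the first is dominated by the seminorm, the second by the $L^2(\Omega)$ convergence), so $u_k\to u$ in $L^2(K)$ for every compact $K\subset\Omc$, i.e.\ in $L^2_{\loc}(\Omc)$.

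Having convergence in $L^2(\Omega)$ and in $L^2_{\loc}(\Omc)$, I would now extract a single subsequence converging pointwise a.e.\ on all of $\RR^n$ by a diagonal argument: exhaust $\Omc$ by an increasing sequence of compact sets $K_1\subset K_2\subset\cdots$, first pass to a subsequence converging pointwise a.e.\ on $\Omega$, then successively refine on each $K_j$, and diagonalize; the resulting subsequence converges pointwise a.e.\ on $\Omega\cup\bigcup_j K_j$, which is $\RR^n$ up to the null set $\partial\Omega$. The main (very mild) obstacle is the exterior part: one must be careful that $u_k$ and $u$ are only a priori in $L^2_{\loc}(\Omc)$ there rather than globally integrable, which is precisely why the estimate is localized to compact $K$ and why the diagonal/exhaustion step is needed; everything else is routine Riesz--Fischer.
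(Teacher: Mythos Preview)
Your argument is correct in substance, though there is a small slip in the displayed estimate: the first term on the right should be the Gagliardo difference
\[
2\int_B\int_K\frac{|(u_k-u)(x)-(u_k-u)(y)|^2}{|x-y|^{n+2s}}\,dy\,dx
\]
(this is the quantity that is ``dominated by the seminorm''), and the elementary inequality you actually need is $|a|^2\le 2|a-b|^2+2|b|^2$ with $a=(u_k-u)(y)$, $b=(u_k-u)(x)$, rather than the one you cite. With that correction the route via $L^2_{\loc}(\Omb)$-convergence plus a compact exhaustion and diagonal extraction is valid.

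The paper takes a shorter path. After passing to a subsequence converging a.e.\ on $\Omega$ (from $L^2(\Omega)$-convergence), it notes that the seminorm over $\Omega\times(\Omc)$ is an $L^1$ norm with respect to the strictly positive weight $|x-y|^{-n-2s}\,dx\,dy$, so along a further subsequence one has $(u_k(x)-u(x))-(u_k(y)-u(y))\to 0$ for a.e.\ $(x,y)\in\Omega\times(\Omc)$. Fixing one good $x\in\Omega$ from the first step and invoking Fubini then gives $u_k(y)\to u(y)$ for a.e.\ $y\in\Omc$ directly, with no localization to compacts and no diagonal argument. Your approach is a bit more hands-on and produces the intermediate $L^2_{\loc}(\Omb)$ convergence as a by-product; the paper's is slightly more economical.
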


\begin{proof}
Let \((u_k)_{k\in \mathbb{N}}\) and \(u\) be as in the statement of the lemma. By definition \(u_k\rightarrow u\) in \(L^2(\Omega)\) as $k\to\infty$. Hence,  passing to a subsequence if necessary,  \(u_k\to u\) a.e. in \(\Omega\) as $k\to\infty$. On the other hand we have that
\begin{align*}
    \int_\Omega \int_{\Omc}\frac{\big|(u_k(x)-u(x))-(u_k(y)-u(y))\big|^2}{\vert x-y\vert^{n+2s}} dy \;dx \rightarrow 0\;\mbox{ as }\;k\to\infty.
\end{align*}
Hence, passing to a subsequence if necessary,
    $(u_k(x)-u(x))-(u_k(y)-u(y))\rightarrow 0$ for a.e.  \(x \in \Omega\) and a.e.  \(y \in \Omc\), as $k\to\infty$.
Therefore,
  $  0=\lim_{k\rightarrow \infty}\left(u_k(x)-u(x)\right)=\lim_{k\rightarrow \infty}\left(u_k(y)-u(y)\right)$
for a.e. \(x \in \Omega\) and a.e. \(y \in \Omc\). The proof is finished.
\end{proof}

Now we introduce the realization in $L^2(\Om)$ of $(-\Delta)^s$ with the nonlocal Robin exterior condition.

\begin{thm}\label{thm-311}
Let $\beta\in L^1(\Omc)$ be a non-negative function,
\begin{align}\label{form-Rob1}
    D(\gota_R):=\Big\{ u \in L^2(\Om):\;u_R \in W^{s,2}_{\beta,\Omega}\Big \},
\end{align}
and let  \(\gota_R :D(\gota_R)\times D(\gota_R) \rightarrow \mathbb{R}\) be given by
\begin{align}\label{form-Rob2}
    \gota_R(u,v):=\mathcal{E}(u_R,v_R)+\int_{\Omc} \beta u_R v_R\;dx.
\end{align}
Then  \(\gota_R\) is a closed, symmetric and densely defined bilinear form on $L^2(\Om)$.  The  selfadjoint operator $A_R$ associated with \(\gota_R\) is given by 

\begin{equation*}
\begin{cases}
\displaystyle D(A_R):=
   \Big\{ u \in L^2(\Omega):
    u_R \in W^{s,2}_{\beta,\Omega}\;\;\exists\; f\in L^2(\Omega)\mbox{ such that }\; u_R \mbox{ is a weak solution of } \eqref{RoPr} \\
    \hfill\text{ with right hand side } f\Big\},\\
    A_R u:= f.
    \end{cases}
\end{equation*}
\end{thm}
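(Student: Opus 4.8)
The plan is to follow the same three-part structure used in the proofs of Theorem \ref{thm-31} and Theorem \ref{thm-38}: (i) show $\gota_R$ is symmetric and bilinear, (ii) show it is closed, (iii) show it is densely defined, and finally identify the associated selfadjoint operator. Bilinearity and symmetry are immediate, since $u\mapsto u_R$ is linear (inspect the formula for $u_R$, which is linear in $u$ on both $\Omega$ and $\Omb$), $\mathcal{E}$ is bilinear and symmetric, and $(u,v)\mapsto\int_{\Omc}\beta u_Rv_R\,dx$ is bilinear and symmetric because $\beta\ge 0$. Non-negativity of $\gota_R$ follows from non-negativity of $\mathcal{E}$ on $W^{s,2}_\Omega$ together with $\beta\ge 0$.

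For closedness, I would equip $D(\gota_R)$ with the norm $\|u\|_{D(\gota_R)}^2:=\gota_R(u,u)+\|u\|_{L^2(\Omega)}^2$ and argue as in Theorem \ref{thm-38}. Note that $\|u\|_{D(\gota_R)}$ is comparable to $\|u_R\|_{W^{s,2}_{\beta,\Omega}}$ (the $L^2(\Omega)$-part of the norm is recovered since $u_R|_\Omega=u$). Given a Cauchy sequence $(u_k)\subset D(\gota_R)$ in this norm, the sequence $((u_k)_R)$ is Cauchy in $W^{s,2}_{\beta,\Omega}$, hence converges to some $v\in W^{s,2}_{\beta,\Omega}$ by Lemma \ref{lem22}. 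Set $u:=v|_\Omega$; then $u_k\to u$ in $L^2(\Omega)$. The key point is to verify $u_R=v$, i.e.\ that the limit $v$ automatically satisfies the nonlocal Robin exterior condition \eqref{RBC}: by Lemma \ref{lem-35} (after passing to a subsequence) $(u_k)_R\to v$ pointwise a.e.\ in $\RR^n$, and since each $(u_k)_R$ satisfies \eqref{ex-u} with $u$ replaced by $u_k|_\Omega=u_k$, passing to the limit in the relation $\big(C_{n,s}\rho(x)+\beta(x)\big)(u_k)_R(x)=C_{n,s}\int_\Omega\frac{u_k(y)}{|x-y|^{n+2s}}\,dy$ (the right-hand side converges because $u_k\to u$ in $L^2(\Omega)$ and, for a.e.\ fixed $x\in\Omb$, the kernel $|x-y|^{-(n+2s)}$ is in $L^2(\Omega)$) yields that $v$ satisfies \eqref{ex-u} with $u$ on the right. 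By Lemma \ref{lem-314}, this forces $v=u_R$, so $u\in D(\gota_R)$ and $u_k\to u$ in $D(\gota_R)$. Density of $D(\gota_R)$ in $L^2(\Omega)$ follows from the inclusion $D(\gota_D)\subset D(\gota_R)$, which I would prove by the same computation as in Lemma \ref{thm-41}: for $u\in D(\gota_D)$, using that $\beta\ge 0$ one checks $\mathcal{E}(u_R,u_R)+\int_{\Omc}\beta u_R^2\,dx\le\mathcal{E}(u_D,u_D)<\infty$ (the extension $u_R$ is, among all extensions, essentially the energy-minimizing one in the weighted space), together with Theorem \ref{thm-31} which gives that $D(\gota_D)=\widetilde W_0^{s,2}(\Om)$ is dense in $L^2(\Om)$.

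For the identification of $A_R$, I would show $A_R\subseteq B$, where $B$ is the selfadjoint operator associated with $\gota_R$ via \eqref{op-A}, exactly as in the Neumann case: if $u\in D(A_R)$ with $f:=A_Ru$, then $u_R\in W^{s,2}_{\beta,\Omega}$ so $u\in D(\gota_R)$, and $u_R$ being a weak solution of \eqref{RoPr} means $\mathcal{E}(u_R,w)+\int_{\Omc}\beta u_Rw\,dx=\int_\Omega fw\,dx$ for all $w\in W^{s,2}_{\beta,\Omega}$; taking $w=v_R$ for $v\in D(\gota_R)$ gives $\gota_R(u,v)=(f,v)_{L^2(\Omega)}$, so $u\in D(B)$ and $Bu=f$. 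Conversely, to upgrade this to equality $A_R=B$, one argues that $B$ is selfadjoint by construction, and that $A_R$ is itself symmetric (from the integration by parts formula \eqref{Int-Part}, which shows a strong solution of \eqref{RoPr} is a weak solution, and symmetry of $\mathcal{E}$), and a symmetric operator containing no selfadjoint proper restriction-of / extension-of relation with a selfadjoint operator must coincide with it. Here one needs to know $A_R$ is selfadjoint; I expect this to be the main obstacle, since unlike the Neumann case there is no cited reference (such as \cite[Theorem 3.11]{SDipierro_XRosOton_EValdinoci_2017a}) doing this for the Robin operator, so I would instead argue directly: run the argument showing $B\subseteq A_R$ as well, by taking $u\in D(B)$, setting $f:=Bu$, and checking that $u_R$ then satisfies \eqref{RoPrWe} for all test functions of the form $v_R$ --- and crucially that this extends to all $v\in W^{s,2}_{\beta,\Omega}$, which requires knowing $\{v_R:v\in D(\gota_R)\}$ is dense in $W^{s,2}_{\beta,\Omega}$, or more directly that any $w\in W^{s,2}_{\beta,\Omega}$ differs from $(w|_\Omega)_R$ by a function supported in $\Omb$ against which the form pairing vanishes by the Robin condition on $u_R$. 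Getting this density/decomposition right is the delicate technical heart of the identification, and I would handle it by decomposing $w=(w|_\Omega)_R+(w-(w|_\Omega)_R)$ and computing $\mathcal{E}(u_R,w-(w|_\Omega)_R)+\int_{\Omc}\beta u_R(w-(w|_\Omega)_R)\,dx$ explicitly, showing it equals $\int_{\Omb}(w-(w|_\Omega)_R)\big(\mathcal N^su_R+\beta u_R\big)\,dx=0$ by \eqref{RBC}.
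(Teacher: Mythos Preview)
Your proposal is correct and follows the same overall architecture as the paper (bilinearity/symmetry, density via $D(\gota_D)\subset D(\gota_R)$, closedness via completeness of $W^{s,2}_{\beta,\Omega}$, then $A_R\subseteq B$), but it diverges from the paper in two places, and in both your route is more self-contained.

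For closedness, the paper identifies the limit $v$ with $u_R$ by invoking the continuity of $\mathcal N^s$ from Lemma \ref{lem:Nmap} to pass $\mathcal N^s(u_k)_R\to\mathcal N^s v$ in $L^2_{\rm loc}(\Omb)$, combining this with $\beta(u_k)_R\to\beta v$ a.e.\ (via Lemma \ref{lem-35}) to conclude $v$ satisfies \eqref{RBC}. Your approach bypasses Lemma \ref{lem:Nmap} entirely: you pass to the limit directly in the pointwise identity $(C_{n,s}\rho+\beta)(u_k)_R=C_{n,s}\int_\Omega u_k(y)|x-y|^{-(n+2s)}dy$, using only Lemma \ref{lem-35} and $L^2(\Omega)$-convergence of $u_k$. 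This is more elementary and equally valid.

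For the identification $A_R=B$, the paper does not argue $B\subseteq A_R$ directly; instead it asserts that $A_R$ is selfadjoint by adapting \cite[Theorem 3.11]{SDipierro_XRosOton_EValdinoci_2017a} to the Robin setting, then concludes from $A_R\subseteq B$ and selfadjointness of both. Your decomposition $w=(w|_\Omega)_R+\phi$ with $\phi|_\Omega=0$, together with the computation $\mathcal E(u_R,\phi)+\int_{\Omc}\beta u_R\phi\,dx=\int_{\Omb}\phi(\mathcal N^su_R+\beta u_R)\,dx=0$, gives $B\subseteq A_R$ without any external input. Note that this decomposition needs $(w|_\Omega)_R\in W^{s,2}_{\beta,\Omega}$ for arbitrary $w\in W^{s,2}_{\beta,\Omega}$, which is exactly Lemma \ref{lem-inf-R} (proved immediately after Theorem \ref{thm-311} in the paper); you should establish that inequality before using it here.
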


\begin{proof}
Let \(u\in D(\gota_D)\). It follows from Lemma \ref{thm-41} that \(u \in D(\gota_N)\). This shows that the extensions $u_N$ and $u_R$ are well defined. Obviously, \(D(\gota_R)=\{u \in L^2(\Omega): \gota_R(u,u)< \infty\}\). Calculating we get that
\begin{align*}
    \gota_R(u,u)-\gota_D(u,u) &= C_{n,s} \int_\Omega \int_{\Omc}\frac{-2 u(x) u_R(y)+u_R^2(y)}{\vert x-y\vert^{n+2s}} \;dy \;dx + \int_{\Omc}\beta(y) u_R^2(y)\; dy \\
    &=  C_{n,s} \int_{\Omc}\Big( -2 \rho(y) u_N(y) u_R(y) + \rho(y) u_R^2(y) + \frac{\beta(y)}{C_{n,s}} u_R^2(y)\Big) dy\\
    &= C_{n,s} \int_{\Omc}\Big( -2 \rho(y) \frac{C_{n,s}\rho(y)+\beta(y)}{C_{n,s}\rho(y)}+ \rho(y)  + \frac{\beta(y)}{C_{n,s}} \Big)u_R^2(y)\; dy\\
    &= -C_{n,s} \int_{\Omc}  \Big(\rho(y) +\frac{ \beta(y) }{C_{n,s}} \Big)u_R^2(y)\;dy
     \leq 0,
\end{align*}
where in the third equality we have used \eqref{N-to-R}.
This implies that  $0\leq \gota_R(u,u) \leq \gota_D(u,u) < \infty$.
Therefore, $D(\gota_D)\subset D(\gota_R)$. Thus, $D(\gota_R)$ is dense in $L^2(\Omega)$ (since $D(\gota_D)$ is dense in $L^2(\Omega)$ by Theorem \ref{thm-31}).

Next, let \((u_k)_{k\in \mathbb{N}}\subset D(\gota_R)\) be such that
\begin{align*}
    \gota_R(u_k-u_m,u_k-u_m) +\|u_k-u_m\|_{L^2(\Om)}^2\rightarrow 0\;\mbox{ as }\; k,m\to\infty.
\end{align*}
This is the same as 
\begin{align*}
    \mathcal{E}((u_k)_R-(u_m)_R,(u_k)_R-(u_m)_R) + \int_{\Omc} \beta \big((u_k)_R-(u_m)_R \big)^2\;dx +\|u_k-u_m\|_{L^2(\Om)}^2\rightarrow 0
\end{align*}
 as $k,m\to\infty$. Since \(W^{s,2}_{\beta,\Omega}\) is a Hilbert space, it follows that  there exists a function \(v\in W^{s,2}_{\beta,\Omega}\) such that \((u_k)_R \rightarrow v\) in \(W^{s,2}_{\beta,\Omega}\) as $k\to\infty$.  Using Lemma \ref{lem:Nmap} we get that
\begin{align*}
    \mathcal{N}^s (u_k)_R \rightarrow \mathcal{N}^s v = f\; \mbox{ in } \;L_{\rm loc}^2(\Omc) \;\mbox{ as }\; k\to\infty.
\end{align*}
This implies that (passing to a subsequence if necessary) $\beta (u_k)_R \rightarrow -f$ a.e. in \(\Omc\) as $k\to\infty$. Since  $(u_k)_R$ converges to $v$ in \(W^{s,2}_{\beta,\Omega}\) as $k\to\infty$, we can deduce that (passing to a subsequence if necessary)  $(u_k)_R$ converges a.e.  to $v$ in $\RR^n$ as $k\to\infty$ (by Lemma \ref{lem-35}). This implies that
   $ \beta (u_k)_R \rightarrow \beta v\; \mbox{ a.e. in }\;
    \Omc \;\mbox{ as }\; k\to\infty$.
Furthermore, since
     $\beta \big((u_k)_R-u_R \big)^2 \rightarrow 0$ in $L^1(\Omc)$ as $k\to\infty$,
it follows that \((u_k)_R \rightarrow g \) in \(L^2(\Omc,\beta dx)\) as $k\to\infty$. But again the pointwise convergence (passing to a subsequence if necessary) shows that \(g=v\) a.e. in $\Omc$.
Hence, \(v\) fulfills the nonlocal Robin exterior condition \eqref{RBC} and  
\begin{align*}
    \mathcal{E}((u_k)_R-v,(u_k)_R-v) +   \big\|\sqrt{\beta}\left((u_k)_R-v \right)\big\|_{L^2(\Omc)}^2 \rightarrow 0\;\mbox{ as }\;k\to\infty.
\end{align*}
Let \(u:= v \vert_\Omega\). Then, \(u_R=v\) and
\begin{align*}
 \gota_R(u_k-u,u_k-u) +\|u_k-u\|_{L^2(\Om)}^2 \rightarrow 0\;\mbox{ as }\;k\to\infty.
\end{align*}
We have shown that \(\gota_R\) is closed. 

Proceeding exactly as in \cite[Theorem 3.11]{SDipierro_XRosOton_EValdinoci_2017a}  for the Neumann case, we can deduce that the operator $A_R$ is selfadjoint.
Now, let \(B\) be the operator associated with \(\gota_R\) in the sense of \eqref{op-A}. The claim that \(A_R=B\) follows similarly as in the case of the Neumann exterior condition. The proof is finished.
\end{proof}

The following is the variant of Lemma \ref{lem-inf} for the form $\gota_R$.

\begin{lem}\label{lem-inf-R}
Let $D(\gota_R)$ be the space defined in \eqref{form-Rob1}. Then
\begin{align}\label{def-formaR}
D(\gota_R)=\Big\{u|_{\Omega}:\; u\in W_{\beta,\Omega}^{s,2} \Big\}.
\end{align}
\end{lem}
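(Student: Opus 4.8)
The plan is to mirror the proof of Lemma \ref{lem-inf}, working with the full Robin quadratic form $u\mapsto\mathcal{E}(u,u)+\int_{\Omc}\beta u^2\,dx$ in place of $\mathcal{E}(u,u)$ and with the extension $u_R$ in place of $u_N$. Write $D$ for the right-hand side of \eqref{def-formaR}. The inclusion $D(\gota_R)\subseteq D$ is immediate from the definition \eqref{form-Rob1}: if $u\in D(\gota_R)$, then $u_R\in W^{s,2}_{\beta,\Omega}$ and $u=u_R|_{\Omega}$, so $u\in D$. For the converse I would fix $v\in D$, say $v=u|_{\Omega}$ with $u\in W^{s,2}_{\beta,\Omega}$, and must show that $v_R\in W^{s,2}_{\beta,\Omega}$, that is, $\mathcal{E}(v_R,v_R)+\int_{\Omc}\beta v_R^2\,dx<\infty$.

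First I would observe that $v_R=v=u$ a.e. in $\Omega$, so the $\Omega\times\Omega$ contributions to $\mathcal{E}(v_R,v_R)$ and to $\mathcal{E}(u,u)$ agree and are finite, and only the $\Omega\times\Omc$ double integral and the weighted boundary term remain to be controlled. Forming the difference of the two Robin forms is legitimate because $\mathcal{E}(v_R,v_R)+\int_{\Omc}\beta v_R^2\,dx\ge 0$ (a sum of non-negative terms, since $\beta\ge 0$), exactly as in the proofs of Lemmas \ref{thm-41} and \ref{lem-inf}. Then, using $\int_{\Omega}|x-y|^{-n-2s}\,dx=\rho(y)$, $\int_{\Omega}v(x)|x-y|^{-n-2s}\,dx=\rho(y)v_N(y)$ and the identity $\big(C_{n,s}\rho(y)+\beta(y)\big)v_R(y)=C_{n,s}\rho(y)v_N(y)$ coming from \eqref{ex-u} and \eqref{N-to-R}, a computation analogous to the one in the proof of Theorem \ref{thm-311} (with the zero extension $u_D$ there replaced by the prescribed exterior data $u|_{\Omc}$, and completing the square in $v_R(y)$) gives
\[
\Big(\mathcal{E}(v_R,v_R)+\int_{\Omc}\beta v_R^2\,dx\Big)-\Big(\mathcal{E}(u,u)+\int_{\Omc}\beta u^2\,dx\Big)
=-C_{n,s}\int_{\Omc}\Big(\rho(y)+\frac{\beta(y)}{C_{n,s}}\Big)\big(v_R(y)-u(y)\big)^2\,dy\le 0 .
\]
Since $u\in W^{s,2}_{\beta,\Omega}$, the reference quantity $\mathcal{E}(u,u)+\int_{\Omc}\beta u^2\,dx$ is finite; hence the left-hand side lies in $[0,\infty)$, i.e. $\mathcal{E}(v_R,v_R)+\int_{\Omc}\beta v_R^2\,dx<\infty$ and $v\in D(\gota_R)$. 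This proves $D\subseteq D(\gota_R)$ and finishes the proof.

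I expect the only delicate point to be the bookkeeping with potentially infinite integrals when passing to the displayed identity — the same issue that already appears in Lemmas \ref{thm-41} and \ref{lem-inf} — which is handled by first noting that the Robin form is non-negative, so its difference with the finite quantity $\mathcal{E}(u,u)+\int_{\Omc}\beta u^2\,dx$ cannot equal $-\infty$, legitimising the algebraic manipulation; everything else is routine. As with Proposition \ref{prop-Neumann}, the same calculation yields in addition the variational characterization $\gota_R(u,u)=\inf\{\mathcal{E}(w,w)+\int_{\Omc}\beta w^2\,dx:\ w\in W^{s,2}_{\beta,\Omega},\ w|_{\Omega}=u\}$, the infimum being attained at $w=u_R$.
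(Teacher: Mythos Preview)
Your proposal is correct and follows essentially the same route as the paper: both establish the trivial inclusion $D(\gota_R)\subseteq D$, then for $v=u|_\Omega$ with $u\in W^{s,2}_{\beta,\Omega}$ compute the difference of the Robin quadratic forms and arrive at the identity $-C_{n,s}\int_{\Omc}\big(\rho(y)+\beta(y)/C_{n,s}\big)(v_R(y)-u(y))^2\,dy\le 0$ (the paper writes the factor as $\rho(y)(1+\beta(y)/(C_{n,s}\rho(y)))$, which is the same thing). Your remark about the variational characterization being a by-product is exactly the content of the paper's Proposition \ref{prop-Robin}.
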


\begin{proof}
Let $D$ denote the right hand side of \eqref{def-formaR}. It is clear that $D(\gota_R)\subseteq D$. 

Conversely, let $v\in D$. Then, $v=u|_{\Omega}$ for some $u\in W_{\beta,\Omega}^{s,2}$. We have to show that 
\begin{align*}
\mathcal E(v_R,v_R)+\int_{\Omc}\beta|v_R|^2\;dx<\infty.
\end{align*}
Calculating we get that
\begin{align}\label{M1}
\mathcal E(v_R,v_R)&+\int_{\Omc}\beta(y)|v_R(y)|^2\;dy-\mathcal E(u,u)-\int_{\Omc}\beta(y)|u(y)|^2\;dy\notag\\
 =&C_{n,s}\int_{\Om}\int_{\Omc}\frac{(v(x)-v_R(y))^2-(v(x)-u(y))^2}{|x-y|^{n+2s}}\;dy\;dx\notag\\
&+\int_{\Omc}\beta(y)\left(|u_R(y)|^2-|u(y)|^2\right)\;dy\notag\\
=&C_{n,s}\int_{\Omc}\left(\rho(y)v_R^2(y)-2\rho(y)v_N(y)v_R(y)+2\rho(y) v_N(y)u(y)-\rho(y) u^2(y)\right)\;dy\notag\\
&+\int_{\Omc}\beta(y)\left(|v_R(y)|^2-|u(y)|^2\right)\;dy.
\end{align}
Using the fact that (by \eqref{N-to-R})
\begin{align*}
v_N(y)=\left(1+\frac{\beta(y)}{C_{n,s}\rho(y)}\right)v_R(y)\;\mbox{ for a.e. }\; y\in\Omb,
\end{align*}
we get from \eqref{M1} that
\begin{align*}
\mathcal E(v_R,v_R)&+\int_{\Omc}\beta(y)|v_R(y)|^2\;dy-\mathcal E(u,u)-\int_{\Omc}\beta(y)|u(y)|^2\;dy\\
=&-C_{n,s}\int_{\Omc}\rho(y)\left(1+\frac{\beta(y)}{C_{n,s}\rho(y)}\right)\Big(v_R(y)-u(y)\Big)^2\;dy
\le 0.
\end{align*}
We have shown that 
\begin{align*}
\mathcal E(v_R,v_R)+\int_{\Omc}\beta(x)|v_R(x)|^2\;dx\le \mathcal E(u,u)+\int_{\Omc}\beta(x)|u(x)|^2\;dx<\infty.
\end{align*}
 The proof is finished
\end{proof}

Here also we have the following result as a direct consequence of the proof of Lemma \ref{lem-inf-R}.

\begin{proposition}\label{prop-Robin}
Let $(\gota_R,D(\gota_R))$ be the form defined in \eqref{form-Rob1}-\eqref{form-Rob2}. Then, for $u\in D(\gota_R)$ we have 
\begin{align}\label{W1}
\gota_R(u,u):=\mathcal E(u_R,u_R)=\inf\Big\{\mathcal E(v,v)+\int_{\Omc}\beta(x)|v(x)|^2\;dx:\; v\in W_{\beta,\Omega}^{s,2}\mbox{ and } v|_{\Omega}=u\Big\}.
\end{align}
In other words, if $u\in L^2(\Omega)$, then $u_R$ is the smallest extension in $W_{\beta,\Omega}^{s,2}$ with respect to the $W_{\beta,\Omega}^{s,2}$-norm, or equivalently, the infimum in \eqref{W1} is attained at $u_R$.
\end{proposition}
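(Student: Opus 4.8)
The plan is to obtain this proposition as an immediate corollary of the identity established inside the proof of Lemma~\ref{lem-inf-R}. First I would observe that, since $u\in D(\gota_R)$, the extension $u_R$ lies in $W^{s,2}_{\beta,\Omega}$ by the very definition \eqref{form-Rob1} of $D(\gota_R)$, and trivially $u_R|_\Omega = u$; hence $u_R$ is one of the functions over which the infimum in \eqref{W1} is taken, and therefore the infimum is at most $\mathcal{E}(u_R,u_R)+\int_{\Omc}\beta u_R^2\,dx = \gota_R(u,u)$.

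For the reverse inequality I would take an arbitrary $w\in W^{s,2}_{\beta,\Omega}$ with $w|_\Omega = u$ and repeat the computation from the proof of Lemma~\ref{lem-inf-R}, now with $w$ playing the role that the generic extension played there and $u_R$ playing the role of $v_R$. Because $w$ and $u_R$ coincide on $\Omega$, the $\Omega\times\Omega$ double integrals cancel in the difference $\gota_R(u,u) - \big(\mathcal{E}(w,w)+\int_{\Omc}\beta w^2\,dx\big)$, leaving only the $\Omega\times\Omc$ integrals and the $\beta$-terms. Integrating in $x\in\Omega$ via $\int_\Omega|x-y|^{-n-2s}\,dx = \rho(y)$ and $\int_\Omega u(x)|x-y|^{-n-2s}\,dx = \rho(y)u_N(y)$, and then inserting the relation $C_{n,s}\rho(y)u_N(y) = \big(C_{n,s}\rho(y)+\beta(y)\big)u_R(y)$ coming from \eqref{N-to-R}, this difference collapses to
$$\gota_R(u,u) - \Big(\mathcal{E}(w,w)+\int_{\Omc}\beta w^2\,dx\Big) = -C_{n,s}\int_{\Omc}\rho(y)\Big(1+\frac{\beta(y)}{C_{n,s}\rho(y)}\Big)\big(u_R(y)-w(y)\big)^2\,dy\le 0,$$
the weight $C_{n,s}\rho+\beta$ being a.e. positive on $\Omc$. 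Thus $\gota_R(u,u)\le \mathcal{E}(w,w)+\int_{\Omc}\beta w^2\,dx$ for every admissible $w$, so $\gota_R(u,u)$ is a lower bound for the infimum; together with the first step this yields \eqref{W1}, and the displayed identity shows in addition that the infimum is attained exactly when $w=u_R$ a.e. on $\Omc$.

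I do not expect a genuine obstacle here: the entire content is already contained in Lemma~\ref{lem-inf-R}, and the only thing requiring attention is the relabeling of variables (in that lemma the restriction was named $v$ and the extension $u$, whereas here the restriction is $u$, the generic extension is $w$, and the distinguished extension is $u_R$), together with the remark that $u_R$ is itself admissible precisely because $u\in D(\gota_R)$. In the written-up proof I would most likely not reproduce the algebra at all and simply invoke the computation carried out in the proof of Lemma~\ref{lem-inf-R}.
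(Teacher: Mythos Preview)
Your proposal is correct and follows exactly the route the paper intends: the proposition is stated as a direct consequence of the computation in the proof of Lemma~\ref{lem-inf-R}, and you simply relabel the variables and note that $u_R$ is itself an admissible competitor. The identity you display is precisely the one obtained there, so nothing further is needed.
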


Next, denote by $T_R=(e^{-tA_R})_{t\ge 0}$ the semigroup on $L^2(\Omega)$ generated by $-A_R$. 

\begin{thm}\label{thm-320}
The semigroup $T_R$ is positivity-preserving.
\end{thm}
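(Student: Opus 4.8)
The plan is to verify the criterion in Remark~\ref{rem1}(i) for the form $(\gota_R,D(\gota_R))$, namely that $u\in D(\gota_R)$ implies $|u|\in D(\gota_R)$ together with $\gota_R(|u|,|u|)\le\gota_R(u,u)$. This is exactly the structure of the proofs of Theorem~\ref{thm-33} and Theorem~\ref{Neumann_positive}, and the argument for $T_R$ combines the two: the $\Omega\times\Omega$ diagonal part is handled by the reverse triangle inequality, and the interaction part $\Omega\times\Omc$ plus the weighted term $\int_{\Omc}\beta u_R^2\,dx$ is handled by an explicit computation using the formula \eqref{N-to-R} for $u_R$ in $\Omc$ and the pointwise bound $|u_R|\le|u|_R$ from Remark~\ref{rem-uR}.

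Concretely, first I would split $\gota_R(u,u)=\mathcal E_\Omega(u,u)+C_{n,s}\int_\Omega\int_{\Omc}\frac{(u(x)-u_R(y))^2}{|x-y|^{n+2s}}\,dy\,dx+\int_{\Omc}\beta u_R^2\,dx$, where $\mathcal E_\Omega$ is as in \eqref{F-Omega}. For the diagonal part, \eqref{N1} gives $\mathcal E_\Omega(|u|,|u|)\le\mathcal E_\Omega(u,u)$. For the remaining two pieces, I would carry out the computation exactly as in \eqref{M1} of Lemma~\ref{lem-inf-R} but comparing $|u|_R$ against $u_R$ rather than comparing an arbitrary extension against $u_R$: expanding the squares and using $\int_\Omega\frac{dx}{|x-y|^{n+2s}}=\rho(y)$ together with $\int_\Omega\frac{u(x)}{|x-y|^{n+2s}}\,dx=\rho(y)u_N(y)$, the $\Omega\times\Omc$ interaction part plus the $\beta$-term of $|u|$ minus that of $u$ should collapse, after substituting $u_N(y)=\big(1+\frac{\beta(y)}{C_{n,s}\rho(y)}\big)u_R(y)$ (and similarly for $|u|$), to something of the form $-C_{n,s}\int_{\Omc}\big(\rho(y)+\frac{\beta(y)}{C_{n,s}}\big)\big(|u|_R^2(y)-u_R^2(y)\big)\,dy$. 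By $|u_R|\le|u|_R$ (Remark~\ref{rem-uR}) this quantity is $\le 0$, whence $\gota_R(|u|,|u|)\le\gota_R(u,u)$; in particular $|u|\in D(\gota_R)$. Then Remark~\ref{rem1}(i) applied to the Dirichlet form $(\gota_R,D(\gota_R))$ (which, together with Theorem~\ref{thm-311}, is closed, symmetric, densely defined and non-negative) yields that $T_R$ is positivity-preserving.

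The main obstacle is bookkeeping in the interaction computation: one must be careful that the extension $|u|_R$ is built from $|u|$ in the same way $u_R$ is built from $u$, so that both $|u|_R$ and $u_R$ satisfy the identity \eqref{N-to-R} with the same weight $\frac{C_{n,s}\rho}{C_{n,s}\rho+\beta}$, and that the cross terms involving $u_N(y)u_R(y)$ reorganize correctly. An alternative, slightly cleaner route avoiding the explicit algebra is to invoke Proposition~\ref{prop-Robin}: since $u_R$ realizes the infimum of $v\mapsto\mathcal E(v,v)+\int_{\Omc}\beta v^2\,dx$ over all $v\in W^{s,2}_{\beta,\Omega}$ with $v|_\Omega=u$, and since $|u_R|$ is itself an admissible competitor for the datum $|u|$ (its restriction to $\Omega$ is $|u|$, and $\mathcal E(|u_R|,|u_R|)\le\mathcal E(u_R,u_R)$ by the reverse triangle inequality on $\RR^{2n}\setminus(\Omc)^2$ while the $\beta$-term is unchanged), we get $\gota_R(|u|,|u|)=\inf(\cdots)\le\mathcal E(|u_R|,|u_R|)+\int_{\Omc}\beta u_R^2\,dx\le\mathcal E(u_R,u_R)+\int_{\Omc}\beta u_R^2\,dx=\gota_R(u,u)$. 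Either way, the conclusion follows from Remark~\ref{rem1}(i).
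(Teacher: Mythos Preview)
Your main argument is correct and matches the paper's proof essentially line for line: the paper also invokes Remark~\ref{rem1}(i), uses the reverse triangle inequality on the $\Omega\times\Omega$ part (as in \eqref{N1}), and carries out the explicit interaction computation (their display \eqref{R2}) to arrive at exactly the expression $C_{n,s}\int_{\Omc}\big(\rho(y)+\tfrac{\beta(y)}{C_{n,s}}\big)\big(u_R^2(y)-|u|_R^2(y)\big)\,dy\le 0$ via \eqref{N-to-R} and Remark~\ref{rem-uR}.

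Your alternative route through Proposition~\ref{prop-Robin} is not in the paper and is a genuine shortcut: by taking $|u_R|$ as a competitor for the infimum defining $\gota_R(|u|,|u|)$, you bypass the entire algebraic expansion \eqref{R2} and reduce the proof to the single observation $\mathcal E(|u_R|,|u_R|)\le\mathcal E(u_R,u_R)$ (reverse triangle inequality on $\RR^{2n}\setminus(\Omc)^2$) together with $\int_{\Omc}\beta|u_R|^2\,dx=\int_{\Omc}\beta u_R^2\,dx$. This is cleaner and more conceptual; the paper's explicit computation, on the other hand, yields the exact identity for the defect $\gota_R(|u|,|u|)-\gota_R(u,u)$, which is reused later in the domination arguments of Section~\ref{sec-dom}.
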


\begin{proof}
Let \(u \in D(\gota_R)\). 
We want to show that \(\gota_R(\vert u \vert,|u|)\le \gota_R(u,u)\). As in the proof of \cref{Neumann_positive}, we have a sum of integrals over \(\Omega \times \Omega\) and over \(\Omega \times (\Omc)\). Firstly, let us inspect the \(\Omega \times \Omega\) part. Let $\mathcal E_\Omega$ be as in \eqref{F-Omega}. Then proceeding as in \eqref{N1} we get that
\begin{align}\label{R1}
    \mathcal{E}_\Omega (\vert u\vert,|u| ) 
    \le \mathcal{E}_\Omega(u,u).
\end{align}

Secondly, for the \(\Omega \times (\Omc)\) part we have 
\begin{align}\label{R2}
    &C_{n,s}\int_\Omega \int_{\Omc} \frac{(\vert u \vert (x)-\vert u \vert _R(y))^2-(u(x)-u_R(y))^2}{\vert x-y\vert ^{n+2s}} dy \; dx\notag\\
    &  \quad + \int_{\Omc} \Big(\beta(y) \vert u \vert_R^2-\beta(y) u_R(y)^2 \Big)dy\notag\\
    &=C_{n,s}\int_\Omega \int_{\Omc}\frac{\vert u(x) \vert ^2-2 \vert u \vert (x) \vert u \vert _R(y) + \vert u \vert_R^2(y) -u^2(x) + 2 u(x)u_R(y)-u_R^2(y)}{\vert x-y\vert ^{n+2s}}dy \; dx\notag\\
    &  \quad + \int_{\Omc}\Big( \beta(y) \vert u \vert_R^2(y)-\beta(y) u_R^2(y)\Big)\; dy\notag\\
    &=C_{n,s}\int_{\Omc}\Big(-2 \rho(y) \vert u \vert_N (y) \vert u \vert _R(y) + \rho(y) \vert u \vert_R^2(y) + 2  \rho(y) u_N(y) u_R(y)-\rho(y) u_R^2(y)\Big)\; dy\notag\\
    &  \quad + \int_{\Omc}\Big( \beta(y) \vert u \vert_R^2(y)-\beta(y) u_R^2(y)\Big)\; dy\notag\\
    &=C_{n,s}\int_{\Omc}\Big(-2 \rho(y) \Big(1+\frac{\beta(y)}{C_{n,s}\rho(y)}\Big)\vert u \vert_R^2 (y)  + \rho(y) \vert u \vert_R^2(y)\Big)\;dy \notag\\
    & \quad+\int_{\Omc}\Big( 2  \rho(y)\Big(1+\frac{\beta(y)}{C_{n,s}\rho(y)}\Big) u_R^2(y)-\rho(y) u_R^2(y) \Big)\;dy\notag\\
    &  \quad + C_{n,s}\int_{\Omc}\Big( \frac{\beta(y)}{C_{n,s}} \vert u \vert_R^2(y)-\frac{\beta(y)}{C_{n,s}} u_R^2(y)\Big)\; dy\notag\\
    &=C_{n,s} \int_{\Omc} \Big(- \rho(y) \vert u \vert_R^2(y) -\frac{ \beta(y)}{C_{n,s}} \vert u \vert_R^2(y) + \rho(y) u_R^2(y) + \frac{\beta(y)}{C_{n,s}} u_R^2(y) \Big)\; dy\notag\\
    &=C_{n,s} \int_{\Omc} \Big(\rho(y)+\frac{\beta(y)}{C_{n,s}}\Big)\Big(u_R^2(y)-\vert u \vert_R^2(y)\Big)\; dy
     \leq 0,
\end{align}
where we have used \eqref{N-to-R} and the last inequality follows from Remark \ref{rem-uR}. Combining  \eqref{R1}-\eqref{R2} we get that \(\gota_R(\vert u \vert,|u|) \leq \gota_R(u,u)\). The proof is finished.
\end{proof}

\section{Some domination results}\label{sec-dom}

In this section we give some results on domination of the semigroups constructed in Section \ref{sec-3}. 

First, we consider the semigroups $T_D$ and  $T_N$.

\begin{thm}\label{dom_Dirichlet_Neumann}
Let $T_D$ and $T_N$ be the semigroups given in Theorems \ref{thm-33} and  \ref{Neumann_positive}, respectively.
Then,
\begin{align}\label{dom1}
    0\le T_D\leq T_N
\end{align}
in the sense of \eqref{dom}.
\end{thm}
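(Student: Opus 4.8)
The statement is a domination result, so the plan is to apply the criterion of Theorem \ref{thm-29} to the pair of semigroups $T_D$ and $T_N$, with $T_N$ playing the role of the dominating semigroup. Both $T_D$ and $T_N$ are positivity-preserving by Theorems \ref{thm-33} and \ref{Neumann_positive}, and $0\le T_D$ is then immediate. Hence it remains to verify condition (ii) of Theorem \ref{thm-29} with $(\gotb,D(\gotb))=(\gota_N,D(\gota_N))$ and $(\gota,D(\gota))=(\gota_D,D(\gota_D))$, i.e. to show that (1) $D(\gota_D)$ is an ideal in $D(\gota_N)$, and (2) $\gota_N(u,v)\le\gota_D(u,v)$ for all $0\le u,v\in D(\gota_D)$.

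For step (1), the inclusion $D(\gota_D)\subset D(\gota_N)$ is already Lemma \ref{thm-41}, so only the ideal property needs argument. Here I would use the decomposition from Remark \ref{rem-D-form}: for any $w\in L^2(\Omega)$ one has, in $[0,+\infty]$,
\begin{align*}
\mathcal E(w_D,w_D)=\mathcal E_\Omega(w,w)+\int_\Omega w^2\kappa\;dx,
\end{align*}
with $\mathcal E_\Omega$ as in \eqref{F-Omega} and $\kappa$ as in \eqref{kappa}. Now take $u\in D(\gota_D)$ and $v\in D(\gota_N)$ with $0\le v\le u$ a.e.\ in $\Omega$. Since $v_N=v$ on $\Omega$, the $\Omega\times\Omega$ part of $\mathcal E(v_N,v_N)$ equals $\mathcal E_\Omega(v,v)$, and as $\mathcal E(v_N,v_N)$ is a sum of two nonnegative integrals, $v\in D(\gota_N)$ forces $\mathcal E_\Omega(v,v)<\infty$. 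From $0\le v\le u$ we get $v^2\le u^2$ a.e.\ in $\Omega$, hence $\int_\Omega v^2\kappa\,dx\le\int_\Omega u^2\kappa\,dx<\infty$, the finiteness coming from $u\in D(\gota_D)$ via the displayed identity. Combining, $\mathcal E(v_D,v_D)<\infty$, i.e. $v_D\in W_\Omega^{s,2}$, so $v\in D(\gota_D)$.

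For step (2), I would polarize the quadratic identity \eqref{M2} from the proof of Lemma \ref{thm-41}: since $u\mapsto u_N$ and $u\mapsto u_D$ are linear and $\mathcal E$ is symmetric bilinear, \eqref{M2} upgrades, for all $u,v\in D(\gota_D)$, to
\begin{align*}
\gota_N(u,v)-\gota_D(u,v)=\mathcal E(u_N,v_N)-\mathcal E(u_D,v_D)=-C_{n,s}\int_{\Omc}u_N(y)\,v_N(y)\,\rho(y)\;dy,
\end{align*}
with $\rho\ge 0$ the function in \eqref{rho}. If $u,v\ge 0$ a.e.\ in $\Omega$, then $u_N,v_N\ge 0$ a.e.\ in $\RR^n$ by Lemma \ref{cont-ext-N}(a), so the right-hand side is $\le 0$ and $\gota_N(u,v)\le\gota_D(u,v)$. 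Steps (1) and (2) verify Theorem \ref{thm-29}(ii), which together with $0\le T_D$ gives \eqref{dom1}.

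I do not expect a deep obstacle here, as the real work has been front-loaded into Lemma \ref{thm-41} and the positivity theorems; the step requiring the most care is the ideal property in step (1), where one must notice that membership in $D(\gota_N)$ controls only the \emph{regional} Dirichlet energy $\mathcal E_\Omega(v,v)$, so the additional $\kappa$-weighted integrability $\int_\Omega v^2\kappa\,dx$ (equivalently, the $\delta^{-2s}$-weighted integrability near $\partial\Omega$) must be borrowed from the pointwise domination $v\le u$ together with $u\in D(\gota_D)$.
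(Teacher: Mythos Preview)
Your proof is correct and follows essentially the same approach as the paper's: both verify Theorem~\ref{thm-29}(ii) by first using Lemma~\ref{thm-41} for the inclusion, then establishing the ideal property via the decomposition $\gota_D(v,v)=\mathcal E_\Omega(v,v)+\int_\Omega v^2\kappa\,dx$ (the paper writes this out with the double integral rather than invoking Remark~\ref{rem-D-form}, and bounds $\mathcal E_\Omega(v,v)\le\gota_N(v,v)$ directly), and finally obtaining $\gota_N(u,v)-\gota_D(u,v)=-C_{n,s}\int_{\Omc}\rho\,u_N v_N\,dy$ (the paper computes this directly rather than polarizing \eqref{M2}, but the computation and conclusion are identical). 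The only differences are notational shortcuts on your side.
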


\begin{proof}
We have already shown in Theorems \ref{thm-33} and \ref{Neumann_positive} that $T_D$ and $T_N$ are positivity-preserving. It remains to verify that the conditions in Theorem \ref{thm-29}(ii) are satisfied. \\
    
{\bf Step 1}:  Recall that  \(D(\gota_D) \subset D(\gota_N)\) (by Lemma \ref{thm-41}). 
Next, let  \(u \in D(\gota_D)\) and \(v \in D(\gota_N)\) be such that $0\leq v \leq u$. 
   Then,
    \begin{align*}
        \gota_D(v,v)&= \frac{C_{n,s}}{2}\int_\Omega \int_\Omega \frac{ (v(x)-v(y))^2}{\vert x-y\vert^{n+2s} } dy \;dx   +C_{n,s}\int_\Omega \int_{\Omc} \frac{v^2(x)}{\vert x-y\vert^{n+2s} } dy \;dx \\
        & \leq \gota_N(v,v) +C_{n,s}\int_\Omega \int_{\Omc} \frac{v^2(x)}{\vert x-y\vert^{n+2s} } dy \;dx \\
        &\leq \gota_N(v,v) +C_{n,s}\int_\Omega \int_{\Omc} \frac{u^2(x)}{\vert x-y\vert^{n+2s} } dy \;dx \\
        & \leq \gota_N(v,v)+\gota_D(u,u) 
        < \infty.
    \end{align*}
Hence, \(v \in D(\gota_D)\) and we have shown that $D(\gota_D)$ is an ideal in $D(\gota_N)$.\\
 
{\bf Step 2}:  Let \(0\le u,v \in D(\gota_D)\). Then, $0\le u_N,v_N$ by Lemma \ref{cont-ext-N}(a).
Calculating we get that
    \begin{align*}
        \gota_N(u,v)-\gota_D(u,v)&= C_{n,s}\int_\Omega \int_{\Omc} \frac{-u(x) v_N(y)- u_N(y) v(x)+u_N(y) v_N(y)}{\vert x-y\vert^{n+2s} } dy \;dx \\
        &= -C_{n,s}\int_{\Omc} \rho(y) u_N(y) v_N(y) dy \leq 0.
    \end{align*}
 Thus, $\gota_N(u,v)\le \gota_D(u,v)$. \\

{\bf Step 3}: Finally, it follows from Theorem \ref{thm-29} that \eqref{dom1} holds, The proof is finished.
\end{proof}

Now we consider the semigroups  $T_D$, $T_R$ and $T_N$.

\begin{thm}\label{thm-43}
Let $T_D$, $T_N$ and $T_R$ be the semigroups given in Theorems \ref{thm-33},  \ref{Neumann_positive} and \ref{thm-320}, respectively.
Then,
\begin{align}\label{dom2}    
0\le T_D \leq T_R \leq T_N
 \end{align}
 in the sense of \eqref{dom}.
\end{thm}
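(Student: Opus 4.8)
The plan is to apply the domination criterion of Theorem \ref{thm-29} twice: to the pair $(T_D,T_R)$, with associated forms $(\gota_D,\gota_R)$, and to the pair $(T_R,T_N)$, with associated forms $(\gota_R,\gota_N)$. All three semigroups are positivity-preserving by Theorems \ref{thm-33}, \ref{thm-320} and \ref{Neumann_positive}, so in each case it remains to verify the two conditions in Theorem \ref{thm-29}(ii): that $D(\gota_D)$ is an ideal of $D(\gota_R)$ (resp.\ $D(\gota_R)$ is an ideal of $D(\gota_N)$), and that $\gota_R(u,v)\le\gota_D(u,v)$ for all $0\le u,v\in D(\gota_D)$ (resp.\ $\gota_N(u,v)\le\gota_R(u,v)$ for all $0\le u,v\in D(\gota_R)$). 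Once these are in place, Theorem \ref{thm-29} gives $T_D\le T_R$ and $T_R\le T_N$, and since $0\le T_D$ by Theorem \ref{thm-33}, the chain \eqref{dom2} follows (this in particular refines Theorem \ref{dom_Dirichlet_Neumann}).

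For the ideal properties I would use the elementary lattice inequality
\begin{align*}
\big|\min(f,g)(x)-\min(f,g)(y)\big|\le |f(x)-f(y)|+|g(x)-g(y)|,
\end{align*}
which shows that $\min(f,g)$ has finite Gagliardo seminorm whenever $f$ and $g$ do. The inclusions $D(\gota_D)\subset D(\gota_R)$ (already proved inside Theorem \ref{thm-311}) and $D(\gota_R)\subset D(\gota_N)$ (a consequence of $W_{\beta,\Omega}^{s,2}\subset W_\Omega^{s,2}$ together with Lemmas \ref{lem-inf-R} and \ref{lem-inf}) are immediate. If $0\le v\le u$ with $u\in D(\gota_D)$ and $v\in D(\gota_R)$, then $v\ge 0$ gives $v_R\ge 0$ (Remark \ref{rem-uR}), so $\min(v_R,u_D)$ equals $v$ on $\Omega$ and $0$ on $\Omc$, i.e.\ $\min(v_R,u_D)=v_D$; as $v_R\in W_{\beta,\Omega}^{s,2}\subset W_\Omega^{s,2}$ and $u_D\in W_\Omega^{s,2}$, the lattice inequality gives $v_D\in W_\Omega^{s,2}$, that is $v\in D(\gota_D)$. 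If $0\le v\le u$ with $u\in D(\gota_R)$ and $v\in D(\gota_N)$, put $W:=\min(v_N,u_R)$; then $W=v$ on $\Omega$, and $0\le v_N$, $0\le u_R$ (Lemma \ref{cont-ext-N}(a) and Remark \ref{rem-uR}) give $0\le W\le u_R$, hence $\int_{\Omc}\beta W^2\,dx\le\int_{\Omc}\beta u_R^2\,dx<\infty$; since $v_N\in W_\Omega^{s,2}$ and $u_R\in W_{\beta,\Omega}^{s,2}\subset W_\Omega^{s,2}$, the lattice inequality controls the Gagliardo seminorm of $W$, so $W\in W_{\beta,\Omega}^{s,2}$ and $v\in D(\gota_R)$ by Lemma \ref{lem-inf-R}.

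For the form inequalities I would repeat, now keeping $u$ and $v$ distinct, the algebra used in the proofs of Theorem \ref{thm-311}, Lemma \ref{lem-inf-R} and Step~2 of Theorem \ref{dom_Dirichlet_Neumann}. Cancelling the $\Omega\times\Omega$ contributions (which are the same for all the extensions) and using \eqref{N-to-R}, one gets, for $0\le u,v\in D(\gota_D)$,
\begin{align*}
\gota_R(u,v)-\gota_D(u,v)=-\int_{\Omc}\big(C_{n,s}\rho(x)+\beta(x)\big)u_R(x)v_R(x)\,dx\le 0,
\end{align*}
and, for $0\le u,v\in D(\gota_R)$ (so that $u_N,v_N$ make sense and $\mathcal E(u_N,v_N)=\gota_N(u,v)$),
\begin{align*}
\gota_R(u,v)-\gota_N(u,v)=\int_{\Omc}\Big(\frac{\beta(x)^2}{C_{n,s}\rho(x)}+\beta(x)\Big)u_R(x)v_R(x)\,dx\ge 0.
\end{align*}
This gives $\gota_R\le\gota_D$ and $\gota_N\le\gota_R$ on the relevant nonnegative pairs, and Theorem \ref{thm-29} then finishes the proof.

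The step I expect to require the most care is the second identity above. Unlike functions in $D(\gota_D)$ (for which $\int_\Omega u^2\kappa\,dx<\infty$ by Remark \ref{rem-D-form}), a function in $D(\gota_R)$ need not satisfy $\int_\Omega u^2\kappa\,dx<\infty$: the constant function belongs to $D(\gota_R)$ while $\int_\Omega\kappa\,dx=\infty$ whenever $2s\ge 1$, by \eqref{delta}. Consequently the individual integrals that appear when one naively expands $\mathcal E(u_R,v_R)$ and $\mathcal E(u_N,v_N)$ may diverge, and the calculation must be organised so that the $\Omega\times\Omega$ terms and the $u(x)^2$, $v(x)^2$ terms cancel \emph{before} any integration is performed; one works throughout with the absolutely convergent combined integrand and invokes Fubini's theorem. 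This is also why the ideal properties are more conveniently obtained from the truncation argument above than from explicit energy estimates.
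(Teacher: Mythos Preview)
Your overall strategy coincides with the paper's: apply Theorem \ref{thm-29} twice, once to $(\gota_D,\gota_R)$ and once to $(\gota_R,\gota_N)$, and your form-inequality identities
\[
\gota_R(u,v)-\gota_D(u,v)=-\int_{\Omc}\big(C_{n,s}\rho+\beta\big)u_Rv_R\,dx,
\qquad
\gota_R(u,v)-\gota_N(u,v)=\int_{\Omc}\Big(\frac{\beta^2}{C_{n,s}\rho}+\beta\Big)u_Rv_R\,dx
\]
are exactly the bilinear versions of the paper's computations. Your remark about the second identity (the divergence of $\int_\Omega u^2\kappa\,dx$ for $u\in D(\gota_R)$ when $2s\ge 1$) is well taken; the paper circumvents this by substituting $u_N=(1+\tfrac{\beta}{C_{n,s}\rho})u_R$ \emph{inside} the integrand before expanding, so that no $u(x)^2$ term ever appears on its own.

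Where you genuinely diverge from the paper is in the verification of the ideal properties. The paper argues by direct energy estimates: for $D(\gota_D)\subset D(\gota_R)$ it reuses the inequality $\gota_D(v,v)\le \gota_N(v,v)+\gota_D(u,u)$ from Theorem \ref{dom_Dirichlet_Neumann}; for $D(\gota_R)\subset D(\gota_N)$ it first derives the identity $\gota_R(v,v)=\gota_N(v,v)+\int_{\Omc}(\tfrac{\beta^2}{C_{n,s}\rho}+\beta)v_R^2\,dx$ and then bounds the integral by the corresponding one with $u_R^2$ (using the monotonicity $0\le v_R\le u_R$). You instead use a truncation argument: take $\min(v_R,u_D)=v_D$ (resp.\ $W=\min(v_N,u_R)$), invoke the pointwise lattice inequality to control the Gagliardo seminorm, and conclude via the characterizations in Lemmas \ref{lem-inf} and \ref{lem-inf-R}. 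This is correct and arguably more robust: it does not need the explicit algebraic identities, it sidesteps the integrability issue you flag, and it transplants verbatim to more general kernels. The paper's route, on the other hand, produces the exact identities above as a by-product, which it then reuses both for the form inequalities and later in Remark \ref{rem-44}.
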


\begin{proof} 
We prove the result in three steps.\\

{\bf Step 1}: We show that 

\begin{align}\label{dom2-1}
0\le T_D\leq T_R. 
\end{align}
We have already shown in Theorems \ref{thm-33} and \ref{thm-320} that $T_D$ and $T_R$ are positivity-preserving.
 We claim that  \(D(\gota_D)\) is an ideal in  \(D(\gota_R)\).  Indeed, \(D(\gota_D) \subset D(\gota_R)\) by Theorem \ref{thm-311}. Proceeding exactly as in the proof of \cref{dom_Dirichlet_Neumann} we can easily deduce that if  \(u \in D(\gota_D)\) and \(v \in D(\gota_R)\) are such that \(0\leq v \leq u\), then  \(v \in D(\gota_D)\). We have shown the claim.
 
  Next, let \(0\le u,v \in D(\gota_D)\). Then, $0\le u_R,v_R$ by Remark \ref{rem-uR}. Calculating and using \eqref{N-to-R} we get that
    \begin{align*}
        &\gota_R(u,v)-\gota_D(u,v)\\
        = &C_{n,s} \int_\Omega \int_{\Omc}\frac{- u(x) v_R(y) -u_R(y) v(x)+u_R(y) v_R(y)}{\vert x-y\vert^{n+2s}} dy \;dx
       + \int_{\Omc}\beta(y) u_R(y) v_R(y) dy \\
    =& C_{n,s} \int_{\Omc} \Big(- \rho(y) u_N(y) v_R(y) - \rho(y) v_N(y) u_R(y) + \rho(y) u_R(y) v_R(y)
   + \frac{\beta(y)}{C_{n,s}} u_R(y) v_R(y)\Big)\; dy\\
    =&C_{n,s} \int_{\Omc} \Big(-2 \rho(y) \frac{C_{n,s}\rho(y)+\beta(y)}{C_{n,s}\rho(y)}u_R(y) v_R(y) + \rho(y) u_R(y) v_R(y) + \frac{\beta(y)}{C_{n,s}} u_R(y) v_R(y)\Big)\; dy\\
    =&-C_{n,s} \int_{\Omc} \Big( \rho(y)u_R(y) v_R(y) + \frac{ \beta(y)}{C_{n,s}} u_R(y)v_R(y) \Big)dy 
    \leq 0.
    \end{align*}
 Hence, $\gota_R(u,v)\le \gota_D(u,v)$. It follows from Theorem \ref{thm-29} that \eqref{dom2-1} holds.\\
 
{\bf Step 2}: Here we show that
\begin{align}\label{dom-2-2}
0\le  T_R \leq T_N. 
 \end{align}
 Firstly, we claim that \(D(\gota_R)\) is an ideal in  \(D(\gota_N)\). Indeed, let \(u \in D(a_R)\).  Calculating and using \eqref{N-to-R} again we get that
    \begin{align}\label{mw1}
        &\gota_N(u,u)-\gota_R(u,u)\notag\\
        &= C_{n,s} \int_\Omega \int_{\Omc} 
        \frac{(u(x)-u_N(y))^2-(u(x)-u_R(y))^2}{\vert x-y\vert ^{n+2s}} dy \; dx  - \int_{\Omc} \beta(y) u_R^2(y) dy\notag \\
        &= C_{n,s}\int_\Omega \int_{\Omc} 
        \frac{\Big(u(x)-u_R(y)-\frac{\beta(y)}{C_{n,s}\rho(y)}u_R(y)\Big)^2-(u(x)-u_R(y))}{\vert x-y\vert ^{n+2s}} dy \; dx  - \int_{\Omc} \beta(y) u_R^2(y)\; dy\notag \\
        &= C_{n,s}\int_\Omega \int_{\Omc} 
        \frac{ -2 \Big(u(x)-u_R(y)\Big) \frac{\beta(y)}{C_{n,s}\rho(y)} u_R(y) + \frac{\beta^2(y)}{C_{n,s}^2\rho^2(y)} u_R^2(y) }{\vert x-y\vert ^{n+2s}} dy \; dx - \int_{\Omc} \beta(y) u_R^2(y) dy\notag \\
        &=C_{n,s} \int_{\Omc} 
         \left(-2 \rho(y) u_N(y) \frac{\beta(y)}{C_{n,s}\rho(y)} u_R(y) + 2 \rho(y) \frac{\beta(y)}{C_{n,s}\rho(y)} u_R^2(y) + \rho(y) \frac{\beta^2(y)}{C_{n,s}^2\rho^2(y)} u_R^2(y)\right)dy\notag\\
         &=C_{n,s} \int_{\Omc} 
        \left( -2 \frac{\beta(y)}{C_{n,s}} \Big(1+\frac{\beta(y)}{C_{n,s}\rho(y)}\Big)  + 2 \frac{\beta (y)}{C_{n,s}}  +  \frac{\beta^2(y)}{C_{n,s}^2\rho(y)} +\frac{\beta(y)}{C_{n,s}} \right)u_R^2(y)\; dy\notag\\
        &=-C_{n,s} \int_{\Omc} 
           \left( \frac{\beta^2(y)}{C_{n,s}^2\rho(y)}   +\frac{ \beta(y)}{C_{n,s}}  \right)u_R^2(y)\;dy 
          \leq 0.
    \end{align}
Therefore, \(0\leq \gota_N(u,u) \leq \gota_R(u,u)\) which implies that \(u \in D(\gota_N)\) . We have shown that $D(\gota_R)\subset D(\gota_N)$.
    Next,  let \(u \in D(\gota_R)\) and \(v \in D(\gota_N)\) be such that \(0\leq v \leq u\). We have to show that $v\in D(\gota_R)$. It follows from \eqref{mw1} that   
    \begin{align}\label{mjw}
       \gota_R(v,v) &= \mathcal{E}(v_N,v_N)+C_{n,s} \int_{\Omc} \left( \frac{\beta^2(y)}{C_{n,s}^2\rho(y)} + \frac{\beta(y)}{C_{n,s}} \right)v_R^2(y)\;dy \\
        & \leq \gota_N(v,v) + \gota_R(u,u) 
        < \infty.\notag
    \end{align}
    Thus, \(v \in D(\gota_R)\) and the proof of the claim is complete.
    
   Secondly, let \(0\le u,v \in D(\gota_R)\). A similar calculation yields    
    \begin{align*}
        \gota_N(u,v)-\gota_R(u,v) =-C_{n,s} \int_{\Omc} \Big( \frac{\beta^2(y)}{C_{n,s}^2\rho(y)} +\frac{\beta(y)}{C_{n,s}}\Big) u_R(y) v_R(y) \;dy 
         \leq 0,
    \end{align*}
  where we have used that $u_R,v_R\ge 0$ a.e. in $\RR^n$ by Remark \ref{rem-uR} (since \(u,v \geq 0\) a.e. in $\Omega$). Thus, \(\gota_N(u,v)\leq \gota_R(u,v)\). It follows from Theorem \ref{thm-29}  that \eqref{dom-2-2} holds.\\

{\bf Step 3}: Finally, \eqref{dom2} follows from \eqref{dom2-1} and \eqref{dom-2-2}. The proof is finished.
\end{proof}

\begin{remark}\label{rem-44}
{\em
    Assume that $\Omega$ is of class $C^1$. For an arbitrary regular Borel measure \(\mu\) on $\Omc$, one would like to define the following form:
    \begin{align}\label{F-mu}
        \gota_\mu(u,v):=\gota_N(u,v) + \int_{\Omc} u_N(y) v_N(y)d\mu
    \end{align}
    with
    \begin{align*}
    D(\gota_\mu):=\left\{u\in D(\gota_N)\cap C(\bOm):\; \int_{\Omc}|u_N(y)|^2\;d\mu<\infty\right\}.
    \end{align*}
By Lemma \ref{cont-ext-N} \(u_N \in C(\RR^n)\), if \(u \in C(\bOm)\) and \(\Om\) is of class \(C^1\).
Let us assume that the form $\gota_\mu$ is closable in $L^2(\Om)$ and denote its closure again by \(\gota_\mu\). Let \(T_\mu\) be the associated semigroup. 
We have the following situation.
\begin{enumerate}
\item It is clear that \(0\le T_\mu \leq T_N\). But the domination \(T_D \leq T_\mu\) is not true in general. Indeed, let $u\in D(\gota_D)$. Calculating we get that
    \begin{align*}
        \gota_D(u,u)-\gota_\mu(u,u) =C_{n,s} \int_{\Omc} \rho(y) u_N^2(y)\;dy - \int_{\Omc} u_N^2(y) d\mu.
    \end{align*}
    Hence, the domination \(T_D \leq T_\mu\)  holds if and only if 
    \begin{align}\label{DR}
        \int_{\Omc} u_N^2(y) d\mu \leq C_{n,s}\int_{\Omc} \rho(y) u_N^2(y)\;dy
    \end{align}
    for every  $u\in D(\gota_D)$.
    The estimate \eqref{DR} fails, for example, if one takes \(d\mu= 2 C_{n,s}\rho(y) dy\). 
    
\item    On the other hand, we have that for every  $u,v\in D(\gota_R)$ (by using  \eqref{N-to-R} and the equality in \eqref{mjw}),
    \begin{align*}
        \gota_R(u,v)=\gota_N(u,v)+ C_{n,s}\int_{\Omc} u_N(y) v_N(y) \frac{\beta(y) \rho(y)}{C_{n,s}\rho(y) + \beta(y)}\;dy 
    \end{align*}
    and 
    \begin{align*}
        \rho(y) - \frac{\beta(y) \rho(y)}{C_{n,s}\rho(y) + \beta(y)} = \frac{C_{n,s}\rho^2(y)}{C_{n,s}\rho(y) + \beta(y)} \geq 0.
    \end{align*}
   In that case, taking the measure $\mu$ as follows:   
    \begin{align*}
        d\mu= \frac{C_{n,s}\beta(y) \rho(y)}{C_{n,s}\rho(y) + \beta(y)} dy,
    \end{align*}
     we get that \(\gota_R=\gota_\mu\).
  \end{enumerate}
    }
    \end{remark}

Next, we show some contractivity properties of the three semigroups.

\begin{thm}\label{thm-dir-form}
The semigroups $T_D$, $T_R$ and \(T_N\) are submarkovian.
\end{thm}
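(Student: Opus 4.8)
The plan is to verify, for each of the three forms $\gota_D$, $\gota_N$, $\gota_R$, that it is a Dirichlet form in the sense of Definition~\ref{Diri-form}; once that is done, the submarkovian property of the associated semigroup is automatic from Remark~\ref{rem1}. We have already established in Theorems~\ref{thm-31}, \ref{thm-38}, \ref{thm-311} that each form is densely defined, symmetric, non-negative and closed, so conditions (a)--(c) of Definition~\ref{Diri-form} hold in all three cases. By Remark~\ref{rem1}, condition (d) can be replaced by the two conditions: (i) $u\in D(\gota)\Rightarrow |u|\in D(\gota)$ and $\gota(|u|,|u|)\le\gota(u,u)$, which gives positivity-preservation; and (ii) $0\le u\in D(\gota)\Rightarrow u\wedge 1\in D(\gota)$ and $\gota(u\wedge 1,u\wedge 1)\le\gota(u,u)$, which gives $L^\infty$-contractivity. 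Condition (i) has already been proved for all three forms in Theorems~\ref{thm-33}, \ref{Neumann_positive} and \ref{thm-320}, so only condition (ii) remains.

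For the Dirichlet form $\gota_D$ this is the classical argument: for $0\le u\in D(\gota_D)=\widetilde W_0^{s,2}(\Om)$ one has $(u\wedge 1)_D=u_D\wedge 1$, and using the representation $\gota_D(u,u)=\frac{C_{n,s}}{2}\int_{\RR^n}\int_{\RR^n}\frac{|u_D(x)-u_D(y)|^2}{|x-y|^{n+2s}}\,dx\,dy$ together with the pointwise inequality $|(a\wedge 1)-(b\wedge 1)|\le|a-b|$ (valid for all real $a,b$), one gets $\gota_D(u\wedge 1,u\wedge 1)\le\gota_D(u,u)<\infty$, which in particular shows $u\wedge 1\in D(\gota_D)$.

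The main point — and the place where the nonlocal structure requires care — is the Neumann and Robin cases, because the extension operators $u\mapsto u_N$ and $u\mapsto u_R$ do not commute with truncation: in general $(u\wedge 1)_N\ne u_N\wedge 1$. The key observation to exploit is that, by Proposition~\ref{prop-Neumann}, $u_N$ is the \emph{smallest} element of $W_\Omega^{s,2}$ (in $\mathcal E$-norm) whose restriction to $\Omega$ equals $u$; similarly by Proposition~\ref{prop-Robin} $u_R$ minimizes $\mathcal E(\cdot,\cdot)+\int_{\Omc}\beta|\cdot|^2\,dx$ among extensions. Given $0\le u\in D(\gota_N)$, the function $v:=u_N\wedge 1$ lies in $W_\Omega^{s,2}$ (by Lemma~\ref{cont-ext-N}(b), $u_N\wedge 1$ is a truncation of the $W_\Omega^{s,2}$-function $u_N$, and truncation is bounded on $W_\Omega^{s,2}$ by the same pointwise inequality as above, applied on $\RR^{2n}\setminus(\Omc)^2$), and its restriction to $\Omega$ is $u\wedge 1$; hence $u\wedge 1\in D(\gota_N)$ and, by the minimizing property \eqref{inf-N},
\begin{align*}
\gota_N(u\wedge 1,u\wedge 1)=\mathcal E\big((u\wedge 1)_N,(u\wedge 1)_N\big)\le\mathcal E(u_N\wedge 1,u_N\wedge 1)\le\mathcal E(u_N,u_N)=\gota_N(u,u),
\end{align*}
where the last inequality is again the pointwise truncation estimate on $\RR^{2n}\setminus(\Omc)^2$. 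The Robin case is identical: with $w:=u_R\wedge 1\in W_{\beta,\Omega}^{s,2}$ (truncation is bounded on $W_{\beta,\Omega}^{s,2}$ since $|w|\le|u_R|$ pointwise controls the weighted $L^2$-term and the pointwise inequality controls the Gagliardo term), one has $w|_\Omega=u\wedge 1$, so $u\wedge 1\in D(\gota_R)$, and by \eqref{W1} together with the pointwise estimates,
\begin{align*}
\gota_R(u\wedge 1,u\wedge 1)\le\mathcal E(u_R\wedge 1,u_R\wedge 1)+\int_{\Omc}\beta\,|u_R\wedge 1|^2\,dx\le\mathcal E(u_R,u_R)+\int_{\Omc}\beta\,|u_R|^2\,dx=\gota_R(u,u).
\end{align*}
This verifies Remark~\ref{rem1}(ii) for all three forms, so by Remark~\ref{rem1} each semigroup is $L^\infty$-contractive; combined with the already-proved positivity-preservation, $T_D$, $T_R$ and $T_N$ are submarkovian. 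The only subtlety to be careful about in writing this up is the membership claims $u\wedge 1\in D(\gota_N)$ and $u\wedge 1\in D(\gota_R)$: these follow precisely because the explicit candidate extensions $u_N\wedge 1$ and $u_R\wedge 1$ lie in the relevant Hilbert spaces, which is where the boundedness of truncation on $W_\Omega^{s,2}$ and $W_{\beta,\Omega}^{s,2}$ — itself a consequence of $|(a\wedge 1)-(b\wedge 1)|\le|a-b|$ applied on $\RR^{2n}\setminus(\Omc)^2$ and $|a\wedge 1|\le|a|$ — is used.
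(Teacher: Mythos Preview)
Your proof is correct. For the Neumann case it is essentially the paper's argument repackaged: the paper proves the key inequality $\mathcal E\big((u\wedge 1)_N,(u\wedge 1)_N\big)\le\mathcal E(u_N\wedge 1,u_N\wedge 1)$ by an explicit integral computation (which is in fact the same computation that underlies Proposition~\ref{prop-Neumann}), and then invokes \cite[Lemma~2.7]{War} for $\mathcal E(u_N\wedge 1,u_N\wedge 1)\le\mathcal E(u_N,u_N)$, whereas you invoke the minimizing characterization \eqref{inf-N} for the first step and the pointwise truncation inequality for the second. The genuine difference is in how you handle $T_D$ and $T_R$: you verify the $L^\infty$-contractivity for each of them directly (using \eqref{W1} for the Robin case), while the paper only treats $T_N$ and then deduces the $L^\infty$-contractivity of $T_D$ and $T_R$ in one line from the domination $T_D\le T_R\le T_N$ established in Theorem~\ref{thm-43}. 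Your route is more self-contained and shows that the Dirichlet-form property of $\gota_R$ does not depend on the domination results of Section~\ref{sec-dom}; the paper's route is shorter once domination is available. One minor point: your parenthetical reference to Lemma~\ref{cont-ext-N}(b) is not what is actually used there --- the membership $u_N\wedge 1\in W_\Omega^{s,2}$ follows simply from $u_N\in W_\Omega^{s,2}$ and the pointwise truncation inequality on $\RR^{2n}\setminus(\Omc)^2$, as you in fact go on to say.
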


\begin{proof}
Since $T_D$, $T_R$ and \(T_N\) are positivity-preserving, it suffices to show that they are \(L^\infty\)-contractive.
We prove the theorem in two steps.\\

{\bf Step 1}: We claim that  \(T_N\) is \(L^\infty\)-contractive.
By  \cite[Lemma 2.7]{War} we know  that 
\begin{align}\label{mw2}
    \mathcal{E}(f \wedge 1,f\wedge 1) \leq \mathcal{E}(f,f)
\end{align}
for every \(0\le f \in W^{s,2}_\Omega\).

Next,  let \(0\le u \in D(\gota_N)\). Then, \(u_N \in W^{s,2}_\Omega\) and by \eqref{mw2} we have 
\begin{align}\label{M3}
    \mathcal{E}(u_N \wedge 1,u_N \wedge 1) \leq \mathcal{E}(u_N,u_N)=\gota_N(u,u).
\end{align}
We want to show that
\begin{align*}
    \gota_N(u \wedge 1,u \wedge 1) = \mathcal{E}((u \wedge 1)_N,(u\wedge 1)_N) \leq \mathcal{E}(u_N \wedge 1,u_N\wedge 1).
\end{align*}
Observe that \(u_N \wedge 1 = (u \wedge 1)_N\) a.e. in \(\Omega\) (but not in $\Omb$). Calculating we get that
\begin{align}\label{M4}
    &\mathcal{E}(u_N \wedge 1,u_N\wedge 1)-\mathcal{E}((u \wedge 1)_N,(u \wedge 1)_N) \notag\\
    &= 
  C_{n,s}  \int_\Omega \int_{\Omc} \frac{\Big((u\wedge 1) (x)-(u_N \wedge 1)(y)\Big)^2}{\vert x-y\vert ^{n+2s}} dy \; dx\notag \\
    &\quad - C_{n,s}\int_\Omega \int_{\Omc} \frac{\Big((u \wedge 1)(x) -(u\wedge 1)_N(y)\Big)^2}{\vert x-y\vert ^{n+2s}} dy \; dx \notag\\
    &=  C_{n,s}\int_\Omega \int_{\Omc} \frac{ (u\wedge 1)^2 (x) -2 (u\wedge 1) (x) (u_N\wedge 1)(y) + (u_N \wedge 1)^2(y)}{\vert x-y\vert ^{n+2s}}\;dy\;dx\notag \\
    &\quad - C_{n,s}\int_{\Om}\int_{\Omc} \frac{ (u \wedge 1)^2(x) -2 (u \wedge 1)(x) (u\wedge 1)_N(y) + (u\wedge 1)_N^2(y)}{\vert x-y\vert ^{n+2s}} \;dy \; dx\notag \\
    &=  C_{n,s}\int_{\Omc} \Big( -2 \rho(y) (u\wedge 1)_N^2 (y) +  \rho(y) (u_N \wedge 1)^2(y)\Big)\;dy    \notag \\ 
    &\quad + C_{n,s}\int_{\Omc}\Big(2 \rho(y) (u \wedge 1)_N^2(y)  - \rho(y) (u\wedge 1)_N^2(y)\Big)\; dy \notag \\
    &=  C_{n,s}\int_{\Omc}\Big(  \rho(y) (u_N \wedge 1)^2(y)  -2 \rho(y) (u\wedge 1)_N^2(y) +\rho(y) (u\wedge 1)_N^2(y)\Big)\; dy\notag  \\
    &=  C_{n,s}\int_{\Omc} \rho(y)\Big(  (u_N \wedge 1)^2(y) -  (u\wedge 1)_N^2 (y)  \Big)\;dy 
    \geq 0,
\end{align}
where we have used that $(u_N\wedge 1)^2\ge (u\wedge 1)_N^2$ a.e. in $\Omb$.
It follows from \eqref{M3} and \eqref{M4} that
\begin{align}\label{M5}
    \gota_N(u \wedge 1,u\wedge 1) \leq \gota_N(u,u).
\end{align}
By Remark \ref{rem1} the inequality \eqref{M5} is equivalent to the \(L^\infty\)-contractivity of \(T_N\).\\

{\bf Step 2}: Since  \(T_N\) is \(L^\infty\)-contractive (by Step 1), it follows from the domination \eqref{dom2} that $T_D$ and $T_R$ are also   \(L^\infty\)-contractive. The proof is finished.
\end{proof}

Next, we have the following ultracontractivity result.

\begin{thm}
The following assertions hold.
\begin{enumerate}
\item There is a constant $C>0$ such that 
\begin{align}\label{s1}
\max\left\{\|T_D(t)\|_{\mathcal L(L^1(\Om),L^\infty(\Om))}, \|T_R(t)\|_{\mathcal L(L^1(\Om),L^\infty(\Om))}\right\}\le Ct^{-\frac{n}{2s}},\;\;\;\forall\; t>0.
\end{align}

\item There is a constant $C>0$ such that 
\begin{align}\label{s2}
\|T_N(t)\|_{\mathcal L(L^1(\Om),L^\infty(\Om))}\le Ce^tt^{-\frac{n}{2s}},\;\;\;\forall\; t>0.
\end{align}
\end{enumerate}
\end{thm}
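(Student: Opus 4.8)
The plan is to obtain all three bounds from Theorem~\ref{thm-main}(b) and Remark~\ref{rem-210}, applied to the forms $\gota_D$, $\gota_R$, $\gota_N$; the only structural difference between \eqref{s1} and \eqref{s2} is whether the bottom of the spectrum of the associated operator is strictly positive (for $A_D$, $A_R$) or equal to $0$ (for $A_N$). First I would verify the hypotheses of Remark~\ref{rem-210} (throughout I treat the case $n>2s$; the remaining cases are discussed below). Since $\Om$ is bounded, $\gotm(\Om)=|\Om|<\infty$. Each of the three form domains embeds continuously into $W^{s,2}(\Om)$: for $\gota_D$ this is immediate from $D(\gota_D)=\widetilde W_0^{s,2}(\Om)$; for $\gota_N$ it follows from
\begin{align*}
\gota_N(u,u)=\mathcal E(u_N,u_N)\ge\frac{C_{n,s}}{2}\int_\Om\int_\Om\frac{|u(x)-u(y)|^2}{|x-y|^{n+2s}}\,dx\,dy ,
\end{align*}
so that $\|u\|^2_{W^{s,2}(\Om)}\le c\,\|u\|^2_{D(\gota_N)}$; and $D(\gota_R)\hookrightarrow D(\gota_N)$ because $\gota_N(u,u)\le\gota_R(u,u)$ for $u\in D(\gota_R)$ (apply Proposition~\ref{prop-Neumann} with the competitor $u_R\in W^{s,2}_\Om$ and use $\beta\ge 0$). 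By Remark~\ref{rem-sob}, the Lipschitz regularity of $\pOm$ gives that $W^{s,2}(\Om)\hookrightarrow L^2(\Om)$ is compact and that $W^{s,2}(\Om)\hookrightarrow L^{2n/(n-2s)}(\Om)$; hence each form satisfies \eqref{Sobolev} with $q_\gota=\tfrac{n}{n-2s}$ (so that $\tfrac{q_\gota}{q_\gota-1}=\tfrac{n}{2s}$) and embeds compactly into $L^2(\Om)$, so Remark~\ref{rem-210} is applicable to all three.

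Next I would locate the bottom eigenvalue in each case. For $\gota_D$ one has $\gota_D(u,u)=\tfrac{C_{n,s}}{2}\int_{\RR^n}\int_{\RR^n}\tfrac{|u_D(x)-u_D(y)|^2}{|x-y|^{n+2s}}\,dx\,dy$, which vanishes only if $u_D$ is a.e.\ constant, hence only if $u=0$ (because $u_D=0$ on $\Omc$); combined with the compactness of $D(\gota_D)\hookrightarrow L^2(\Om)$ this forces $\lambda_1(A_D)>0$. Similarly $\gota_R(u,u)=\mathcal E(u_R,u_R)+\int_{\Omc}\beta u_R^2=0$ forces $u_R$ to be a.e.\ constant and $\int_{\Omc}\beta u_R^2=0$, hence $u=0$ provided $\beta\not\equiv 0$, so $\lambda_1(A_R)>0$. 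By contrast $1\in D(\gota_N)$ (since $1_N\equiv 1$ and $\mathcal E(1,1)=0$) and $\gota_N(1,v)=\mathcal E(1,v_N)=0$ for every $v\in D(\gota_N)$, so $A_N1=0$ and therefore $\lambda_1(A_N)=0$.

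Finally, Remark~\ref{rem-210}(a) applied to $A_D$ and $A_R$ yields \eqref{ultra} for all $t>0$ with exponent $\tfrac{q_\gota}{q_\gota-1}=\tfrac{n}{2s}$, which is exactly \eqref{s1}, while Remark~\ref{rem-210}(b) applied to $A_N$ yields \eqref{s2}. As a consistency check, the domination \eqref{dom2} gives $\|T_R(t)\|_{\mathcal L(L^1(\Om),L^\infty(\Om))}\le\|T_N(t)\|_{\mathcal L(L^1(\Om),L^\infty(\Om))}$, which already recovers \eqref{s1} for $T_R$ on $0<t\le 1$.

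I expect the genuine difficulty to be producing the sharp exponent $\tfrac{n}{2s}$ \emph{uniformly} in $n$ and $s$: the Sobolev-embedding route is transparent only when $n>2s$, whereas in the critical and supercritical regimes $n\le 2s$ (which force $n=1$) one only has $W^{s,2}(\Om)\hookrightarrow L^p(\Om)$ for all $p<\infty$, resp.\ $W^{s,2}(\Om)\hookrightarrow L^\infty(\Om)$, from which Theorem~\ref{thm-main}(b) alone does not deliver the exponent $\tfrac{n}{2s}$ (it only gives $t^{-q/(q-1)}$ with $q/(q-1)>1$ arbitrary). There one would instead invoke a Nash-type inequality for $W^{s,2}(\Om)$ valid in every dimension, or, for $T_D$, the domination $T_D\le e^{-t(-\Delta)^s}$ on $\RR^n$ together with the sharp kernel bound $\|e^{-t(-\Delta)^s}\|_{\mathcal L(L^1(\RR^n),L^\infty(\RR^n))}\le c_n t^{-n/2s}$. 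A secondary point is the strict positivity of $\lambda_1(A_R)$, which genuinely requires $\beta\not\equiv 0$: if $\beta\equiv 0$ then $A_R=A_N$ and only the weaker bound \eqref{s2} can hold (e.g.\ $T_N(t)\mathbf 1=\mathbf 1$ rules out \eqref{s1} for $T_N$).
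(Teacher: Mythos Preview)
Your argument follows exactly the paper's route: verify that each form domain embeds continuously into $W^{s,2}(\Om)$, invoke the Sobolev embedding of Remark~\ref{rem-sob} with $q_\gota=\tfrac{n}{n-2s}$, use compactness of the embedding into $L^2(\Om)$, and then apply Theorem~\ref{thm-main} together with Remark~\ref{rem-210}, distinguishing the cases $\lambda_1>0$ (for $A_D,A_R$) and $\lambda_1=0$ (for $A_N$). The paper's own proof is in fact terser than yours: it simply asserts that the first eigenvalues of $A_D$ and $A_R$ are strictly positive and that $\lambda_1(A_N)=0$, without the justification you supply.

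Your two closing observations are genuine and go beyond the paper. First, the paper writes $q_\gota=\tfrac{n}{n-2s}>1$ without comment, which tacitly restricts to $n>2s$; your remark that the exponent $\tfrac{n}{2s}$ does not fall out of Theorem~\ref{thm-main}(b) when $n\le 2s$ is correct, and the paper does not address this. Second, your point that $\lambda_1(A_R)>0$ requires $\beta\not\equiv 0$ is also well taken: if $\beta\equiv 0$ then $u_R=u_N$, $A_R=A_N$, and only \eqref{s2} can hold. The paper does not isolate this hypothesis either.
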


\begin{proof}
Recall that $D(\gota_D)=\widetilde W_0^{s,2}(\Omega)$. It follows from \eqref{def-formaN} and \eqref{def-formaR} that the continuous embeddings $D(\gota_N)$, $D(\gota_R)\hookrightarrow W^{s,2}(\Om)$ hold. In addition, by Theorem \ref{thm-dir-form} we have that $(\gota_D,D(\gota_D))$, $(\gota_R,D(\gota_R))$ and $(\gota_N,D(\gota_N))$ are Dirichlet forms on $L^2(\Om)$.
Hence, using Remark \ref{rem-sob} we can deduce that  $(\gota_D,D(\gota_D))$, $(\gota_R,D(\gota_R))$ and $(\gota_N,D(\gota_N))$ satisfy all the hypotheses in Theorem \ref{thm-main} with $q_{\gota}=\frac{n}{n-2s}>1$. It also follows from Remark \ref{rem-sob} that the embeddings $D(\gota_D)$, $D(\gota_N)$, $D(\gota_R)\hookrightarrow L^2(\Om)$ are compact.
We have shown that the operators $A_D$, $A_R$,  $A_N$, and the semigroups $T_D$, $T_R$, $T_N$ satisfy all the assertions in Theorem \ref{thm-main} and Remark \ref{rem-210}.  Thus, the estimate \eqref{s1} follows from Theorem \ref{thm-main} and Remark \ref{rem-210} together with the fact that the first eigenvalues of $A_D$ and $A_R$ are strictly positive. The estimate \eqref{s2} also follows from Theorem \ref{thm-main},  Remark \ref{rem-210} and the fact that for $A_N$, its first eigenvalue is zero, as the constant function $1\in D(\gota_N)$ and $\gota_N(1,1)=0$. The proof is finished.
\end{proof}

We conclude the paper by giving some open problems.

\section{Open problems}\label{open}

In this section we give some interesting open problems related to the three Dirichlet forms and semigroups investigated in the previous sections.

\begin{enumerate}

\item[(1)] {\bf Inner regularity of solutions to the fractional Neumann and Robin problems.} In the case of the fractional Dirichlet problem, it has been shown in \cite{BWZ1} that weak solutions of \eqref{DiPr} belong to $W_{\rm loc}^{2s,2}(\Omega)$.

{\it As we have already mentioned in Section \ref{sec-3}, it is then a natural question to ask if the same inner regularity result holds for the Neumann and Robin problems}. 

This very interesting problem will be a topic of a future investigation.\\

\item[(2)] {\bf From weak to strong solutions.} {\it As we have mentioned in Section \ref{sec-3} we do not know if weak solutions of the Dirichlet, Neumann and Robin problems are strong solutions}. 

This is an interesting subject and deserves to be clarified. This will also be a topic of a future investigation.\\

\item[(3)] {\bf Kernel estimates for the semigroups $T_R$ and $T_N$.} It follows from Theorem  \ref{thm-main} that each of the semigroups $T_D$, $T_R$ and $T_N$ is given by a kernel $K:(0,\infty)\times\Omega\times\Omega\to \RR$ and $K(t,\cdot,\cdot)$ belongs to $L^\infty(\Om\times\Om)$ for every $t>0$. Let us denote by $K_D$, $K_R$ and $K_N$ the kernels of $T_D$, $T_R$ and $T_N$, respectively.
It has been shown in \cite{Blu,Chen2}  that 
there are two constants $0<C_1\le C_2$ such that for a.e $x,y\in\Omega$ and $t>0$, 
\begin{equation}\label{KER}
C_1t^{-\frac{N}{2s}}\left(1+|x-y|t^{-\frac{1}{2s}}\right)^{-(N+2s)}\le K_D(t,x,y)\le C_2t^{-\frac{N}{2s}}\left(1+|x-y|t^{-\frac{1}{2s}}\right)^{-(N+2s)}.
\end{equation}

{\it What are the corresponding estimates for the kernels $K_R(t,\cdot,\cdot)$ and $K_N(t,\cdot,\cdot)$?}\\

\item[(4)] {\bf Analyticity on $L^1$ of the semigroups $T_R$ and $T_N$.} By Theorem \ref{thm-main} again the semigroups $T_D$, $T_R$ and $T_N$ are analytic on $L^p(\Omega)$ for every $1<p<\infty$. Very recently, using \eqref{KER}, it has been shown in \cite{KSW} that the semigroup $T_D$ is also analytic of angle $\frac{\pi}{2}$ on $L^1(\Om)$.

 {\it The analyticity on $L^1(\Om)$ of the semigroups $T_R$ and $T_N$ remains an open problem.}\\

\item[(5)] {\bf Sandwiched semigroups.} Assume for simplicity that $\Omega$ has a Lipschitz continuous boundary. In the case of the Laplace operator, the relative capacity $\operatorname{Cap}_{\bOm}$ has been defined in  \cite{ArWa1,ArWa2,War-T} for an arbitrary set $E\subset\bOm$ by
\begin{align*}
\operatorname{Cap}_{\bOm}(E):=\inf\Big\{\|u\|_{W^{1,2}(\Om)}^2:\; u\in W^{1,2}(\Omega):\;\exists\; O\subset\RR^n\;\mbox{ open such that } \\
\hfill E\subset O\mbox{ and } u\ge 1 \mbox{ a.e. in }\; \Omega\cap O\Big\}.
\end{align*}
 Let  $\eta$ be a regular Borel measure on $\pOm$. Assume that $\eta$ is absolutely continuous with respect to $\operatorname{Cap}_{\bOm}$ in the sense that
 \begin{align}\label{abs}
 \operatorname{Cap}_{\bOm}(B)=0\;\Longrightarrow\;\eta(B)=0\;\mbox{ for any Borel set } B\subset\pOm.
 \end{align}
  Let
\begin{align*}
D(\gota^\eta):=\Big\{u\in W^{1,2}(\Omega):\;\int_{\pOm}|\tilde u|^2\;d\eta<\infty\Big\},
\end{align*}
where $\tilde u$ denotes the relative quasi-continuous version of $u$, and define the closed bilinear form $\gota^\eta: D(\gota^\eta)\times D(\gota^\eta)\to\RR$ in $L^2(\Om)$ by
\begin{align*}
\gota^\eta(u,v):=\int_{\Omega}\nabla u\cdot\nabla v\;dx+\int_{\pOm}\tilde u\tilde v\;d\eta.
\end{align*}
It has been shown in \cite{ArWa1,War-T} that the semigroup $T^\eta$ associated with $(\gota^\eta,D(\gota^\eta))$ satisfies
\begin{align*}
0\le T^D\le T^\eta\le T^N,
\end{align*}
in the sense of \eqref{dom},
where $T^D$ is the semigroup on $L^2(\Omega)$ associated with the form
\begin{align*}
\gota^D(u,v):=\int_{\Omega}\nabla u\cdot\nabla v\;dx, \;\;u,v\in D(\gota^D):=W_0^{1,2}(\Omega),
\end{align*}
and $T^N$ is the semigroup on $L^2(\Om)$ associated with $(\gota^N,D(\gota^N))$ (see \eqref{lnbc}). Conversely, they have also shown that any symmetric  semigroup on $L^2(\Om)$ associated with a regular and local Dirichlet form (see e.g. \cite[Chapter 1]{Fuk} for the definition of a regular and local form) and sandwiched between $T^D$ and $T^N$, is always given by $T^\eta$ for some regular Borel measure $\eta$ on $\pOm$ satisfying \eqref{abs}.

In the case of the fractional Laplace operator, we have seen in Remark \ref{rem-44}(a) that for the form $\gota_\mu$ (recall that here $\mu$ is a regular Borel measure on $\Omc$) given in \eqref{F-mu}, the associated semigroup $T_\mu$ is not always sandwiched between $T_D$ and $T_N$. Of course in this case we have shown that $T_\mu\le T_N$, but the domination $T_D\le T_\mu$ is not always true. 

In addition, consider the form $\gota: D(\gota_D)\times D(\gota_D)\to\RR$ in $L^2(\Omega)$ given by 
\begin{align*}
\gota(u,v):=\frac 12\Big(\gota_D(u_D,v_D)+\gota_N(u_N,v_N)\Big),
\end{align*}
where we recall that $D(\gota_D)=\widetilde W_0^{s,2}(\Omega)$, and $u_D$, $u_N$ are given in \eqref{uD} and \eqref{ext-N}, respectively.
It is easy to see that $\gota$ is symmetric, closed and densely defined. Let $A$ be the selfadjoint operator on $L^2(\Omega)$ associated with $(\gota,D(\gota))$ and $T$ the associated semigroup. Then, we can easily show that the domination $0\le T_D\le T\le T_N$ holds in the sense of \eqref{dom}. But  it is not clear if $A$ is a realization in $L^2(\Omega)$ of $(-\Delta)^s$. Therefore, a natural question arises.
\begin{center}
{\it Let $(\gota_\mu, D(\gota_\mu))$ be the form defined in Remark \ref{rem-44}.
Let $T$ be a symmetric semigroup on $L^2(\Om)$ satisfying $0\le T_D\le T\le T_N$  in the sense of \eqref{dom} and let $(\gota,D(\gota))$ be the closed bilinear form on $L^2(\Om)$ associated with $T$. Under which conditions on $(\gota,D(\gota))$ does a measure \(\mu\) exist such that $\gota=\gota_\mu$?}
\end{center}
\end{enumerate}

\subsection*{\bf Acknowledgement} The authors would like to thank the referee for the careful reading of the manuscript and the valuable suggestions, which have been very helpful in improving the paper.

\bibliographystyle{plain}
\bibliography{refs}

\end{document}